\definecolor{darkblue}{rgb}{0,0,0.4} \usepackage[colorlinks=true,
\numberwithin{equation}{section}
\newtheorem{thm}{Theorem}
\newtheorem{theorem}[thm]{Theorem}
\newtheorem{thmcor}[thm]{Corollary}
\newtheorem{lem}{Lemma}[section]               
\newtheorem{lemma}[lem]{Lemma}               
\newtheorem{cor}[lem]{Corollary}
\newtheorem{corollary}[lem]{Corollary}               
\newtheorem{prop}[lem]{Proposition}
\newtheorem{citethm}[lem]{Theorem}
\theoremstyle{definition}     
\newtheorem{question}[lem]{Question}
\newtheorem{defn}[lem]{Definition} 
\newtheorem{definition}[lem]{Definition}
\theoremstyle{remark}     
\newtheorem{rem}{Remark}[section]
\newtheorem{remark}[rem]{Remark}
\numberwithin{figure}{section}
\newcommand{\Appendix}[1]{\hyperref[app:#1]{Appendix~\ref*{app:#1}}}
\newcommand{\Section}[1]{\hyperref[sec:#1]{Section~\ref*{sec:#1}}}
\newcommand{\Subsection}[1]{\hyperref[subsec:#1]{Subsection~\ref*{subsec:#1}}}
\newcommand{\Lemma}[1]{\hyperref[lem:#1]{Lemma~\ref*{lem:#1}}}
\newcommand{\Theorem}[1]{\hyperref[thm:#1]{Theorem~\ref*{thm:#1}}}
\newcommand{\ThmCor}[1]{\hyperref[thmcor:#1]{Corollary~\ref*{thmcor:#1}}}
\newcommand{\Citethm}[1]{\hyperref[citethm:#1]{Theorem~\ref*{citethm:#1}}}
\newcommand{\Definition}[1]{\hyperref[def:#1]{Definition~\ref*{def:#1}}}
\newcommand{\Remark}[1]{\hyperref[rem:#1]{Remark~\ref*{rem:#1}}}
\newcommand{\Figure}[1]{\hyperref[fig:#1]{Figure~\ref*{fig:#1}}}
\newcommand{\Conjecture}[1]{\hyperref[conj:#1]{Conjecture~\ref*{conj:#1}}}
\newcommand{\Corollary}[1]{\hyperref[cor:#1]{Corollary~\ref*{cor:#1}}}
\newcommand{\Proposition}[1]{\hyperref[prop:#1]{Proposition~\ref*{prop:#1}}}
\newcommand{\Question}[1]{\hyperref[ques:#1]{Question~\ref*{ques:#1}}}
\newcommand{\Example}[1]{\hyperref[exam:#1]{Example~\ref*{exam:#1}}}
\newcommand{\Table}[1]{\hyperref[table:#1]{Table~\ref*{table:#1}}}
\newcommand{\Restric}[1]{\hyperref[restric:#1]{Restriction~\ref*{restric:#1}}}
\newcommand{\Equation}[2][{}]{Equation#1~(\ref{eq:#2})}
\newcommand{\R}{\mathbb{R}}
\newcommand{\Z}{\mathbb{Z}}
\newcommand{\F}{\mathbb{F}}
\newcommand{\mc}{\mathcal}
\newcommand{\bm}{\mathbf}
\newcommand{\wh}{\widehat}
\newcommand{\wt}{\widetilde}
\newcommand{\ol}{\overline}
\newcommand{\sbs}{\subset}
\newcommand{\al}{\alpha}
\newcommand{\be}{\beta}
\newcommand{\ep}{\epsilon}
\newcommand{\from}{\colon}
\newcommand{\set}[2]{\{#1\mid#2\}}
\newcommand{\restrict}[2]{{#1}|_{#2}}
\renewcommand{\th}{^{\text{th}}}
\DeclareMathOperator{\Id}{Id}
\DeclareMathOperator{\Sq}{Sq}
\newcommand{\Kh}{\mathit{Kh}}
\newcommand{\KhCx}{\mathit{KC}}
\newcommand{\Realize}[2][{}]{|#2|_{#1}}
\newcommand{\CubeFlowCat}{\mathscr{C}_C}
\newcommand{\KhFlowCat}{\mathscr{C}_K}
\newcommand{\Moduli}{\mathcal{M}}
\newcommand{\gr}{\mathrm{gr}}
\newcommand{\KhSpace}{\mathcal{X}_\mathit{Kh}}
\newcommand{\TupV}{\mathbf}
\newcommand{\Frame}{\varphi}
\newcommand{\Cube}{\mathcal{C}}
\newcommand{\diff}{\delta}
\newcommand{\co}{\colon}
\newcommand{\bdy}{\partial}
\newcommand{\ZZ}{\mathbb{Z}}
\newcommand{\respectively}{resp.\ }
\newcommand*{\defeq}{\mathrel{\vcenter{\baselineskip0.5ex \lineskiplimit0pt
                     \hbox{\scriptsize.}\hbox{\scriptsize.}}}%
                     =}
\newcommand{\BNcx}{C}
\newcommand{\Filt}{\mathcal{F}}
\newcommand{\Red}{\textcolor{red}}
\newcommand{\xbar}{x_{\text{\rotatebox[origin=c]{90}{$-$}}}}
\newcommand{\Field}{\mathbb{F}}
\DeclareMathOperator{\characteristic}{\mathit{char}}
\newcommand{\QQ}{\mathbb{Q}}
\newcommand{\CubeCx}{C^*_{\mathit{cube}}}
\newcommand{\signass}{\mathfrak{s}}
\newcommand{\tsignass}{\mathfrak{t}}
\begin{document}
\title{A refinement of Rasmussen's s-invariant}

\author{Robert Lipshitz}
\thanks{RL was supported by NSF grant number DMS-0905796 and a Sloan Research Fellowship.}
\email{\href{mailto:lipshitz@math.columbia.edu}{lipshitz@math.columbia.edu}}

\author{Sucharit Sarkar}
\thanks{SS was supported by a Clay Mathematics Institute Postdoctoral Fellowship}
\email{\href{mailto:sucharit@math.columbia.edu}{sucharit@math.columbia.edu}}

\subjclass[2010]{\href{http://www.ams.org/mathscinet/search/mscdoc.html?code=57M25,55P42}{57M25,
    55P42}}

\address{Department of Mathematics, Columbia University, New York, NY 10027}
\keywords{}

\date{\today}

\begin{abstract}
  In~\cite{RS-khovanov} we constructed a spectrum-level refinement of
  Khovanov homology. This refinement induces stable cohomology
  operations on Khovanov homology. In this paper we show that these
  cohomology operations commute with cobordism maps on Khovanov
  homology. As a consequence we obtain a refinement of Rasmussen's
  slice genus bound $s$ for each stable cohomology operation. We show
  that in the case of the Steenrod square $\Sq^2$ our refinement is
  strictly stronger than $s$.
\end{abstract}

\maketitle

\tableofcontents

\section{Introduction}
In~\cite{RS-khovanov} we gave a space-level refinement of Khovanov
homology. That is, given a link $L$ we produced a family of suspension
spectra $\KhSpace^j(L)$ with the property that
$\wt{H}^i(\KhSpace^j(L))=\Kh^{i,j}(L)$.  In this paper, we use these
Khovanov spectra to give a family of potential improvements of
Rasmussen's celebrated $s$ invariant~\cite{Ras-kh-slice}; and we
show that at least one of these is, in fact, an improvement on $s$.

These refinements are fairly easy to state. To wit, let
$\Kh^{i,j}(L;\Field)$ denote Khovanov homology with coefficients in a
field $\Field$. There is a spectral sequence
$\Kh^{i,j}(L;\Field)\rightrightarrows \Field^{2^{|L|}}$, coming from a
filtered chain complex $(\BNcx_*,\Filt_\bullet)$. (Here,
$\Filt_\bullet$ is a descending filtration, with
$\Kh^{*,j}(L;\Field)=H_*(\Filt_j\BNcx/\Filt_{j+2}\BNcx)$.)
Originally, this was defined by Lee~\cite{Lee-kh-endomorphism}, for
fields $\Field$ of characteristic different from $2$. A variant which
works for all fields (in fact, all rings) was studied by
Bar-Natan~\cite{Bar-kh-tangle-cob} and
Turner~\cite{Turner-kh-BNseq}. We will work with this variant, which
is reviewed in \Section{BN-cx}.

The Rasmussen $s$ invariant for a knot $K$ is defined by
\begin{align*}
s^\Field(K)&=\max\set{q\in2\Z+1}{i_*\co H_*(\Filt_q\BNcx)\to H_*(\BNcx)\cong \Field^2\text{
  surjective}} + 1 \\
&= \max\set{q\in 2\Z+1}{i_*\co H_*(\Filt_q\BNcx)\to H_*(\BNcx)\cong \Field^2\text{
  nonzero}} - 1.\footnotemark
\end{align*}
and gives a lower bound for the slice genus: $|s^\Field(K)|\leq 2g_4(K)$.  It
is shown in~\cite{MTV-Kh-s-invts} that if $\characteristic(\Field)\neq
2$ then it makes no difference whether
one uses the Bar-Natan deformation or the Lee deformation in the
definition of $s$; see the discussion around \Citethm{MTV}, below.
\footnotetext{For justification of this equality in the
  case of fields of characteristic $2$, see
  \Proposition{differ-by-two}, below. Note that, while it is claimed
  in~\cite{MTV-Kh-s-invts} that $s^\Field$ is independent of $\Field$,
  there is a gap in the proof of~\cite[Proposition
  3.2]{MTV-Kh-s-invts}.}

To define the improvements, let $\alpha\co
\wt{H}^*(\cdot; \Field)\to \wt{H}^{*+n}(\cdot; \Field)$ be a stable cohomology
operation (for some $n>0$).
\begin{defn}\label{def:full}
  Fix a knot $K$.  Call an odd integer $q$ \emph{$\alpha$-half-full} if
  there exist elements $\wt{a}\in\Kh^{-n,q}(K;\Field)$,
  $\wh{a}\in\Kh^{0,q}(K;\Field)$, $a\in H_0(\Filt_q;\Field)$
  and $\ol{a}\in H_0(\BNcx,\Field)$ satisfying:
\begin{enumerate}
\item the map $\alpha\from\Kh^{-n,q}(K;\Field)=\wt{H}^{-n}(\KhSpace^q;\Field)\to
  \wt{H}^{0}(\KhSpace^q;\Field)=\Kh^{0,q}(K;\Field)$ sends $\wt{a}$ to
  $\wh{a}$;
\item the map
  $H_0(\Filt_q;\Field)\to\Kh^{0,q}(K;\Field)=H_0(\Filt_q/\Filt_{q+2};\Field)$ sends
  $a$ to $\wh{a}$;
\item the map $H_0(\Filt_q;\Field)\to H_0(\BNcx;\Field)$ sends $a$ to
$\ol{a}$; and
\item $\ol{a}\in H_0(\BNcx;\Field)=\Field\oplus \Field$ is a generator.
\end{enumerate}
(Note that $\wt{a}$ and $\wh{a}$ are allowed to be zero.)

Call an odd integer $q$ \emph{$\alpha$-full} if there exist elements
$\wt{a},\wt{b}\in\Kh^{-n,q}(K;\Field)$, $\wh{a},\wh{b}\in\Kh^{0,q}(K;\Field)$, $a,b\in
H_0(\Filt_q;\Field)$ and $\ol{a},\ol{b}\in H_0(\BNcx;\Field)$ satisfying:
\begin{enumerate}
\item the map
  $\alpha\from\Kh^{-n,q}(K;\Field)=\wt{H}^{-n}(\KhSpace^q;\Field)\to
  \wt{H}^{0}(\KhSpace^q;\Field)=\Kh^{0,q}(K;\Field)$ sends
  $\wt{a},\wt{b}$ to $\wh{a},\wh{b}$;
\item the map $H_0(\Filt_q;\Field)\to\Kh^{0,q}(K;\Field)=H_0(\Filt_q/\Filt_{q+2};\Field)$ sends $a,b$ to
$\wh{a},\wh{b}$;
\item the map $H_0(\Filt_q;\Field)\to H_0(\BNcx;\Field)$ sends $a,b$ to
$\ol{a},\ol{b}$; and
\item $\ol{a},\ol{b}\in H_0(\BNcx;\Field)=\Field\oplus \Field$ form a basis.
\end{enumerate}
(Again, note that $\wt{a}$, $\wt{b}$, $\wh{a}$ and $\wh{b}$ are
allowed to be zero.)

In other words, $q$ is \emph{$\alpha$-half-full} if the following
configuration exists:
\[
\xymatrix{
\langle\wt{a}\rangle\ar[r]\ar@{^(->}[d]&\langle\wh{a}\rangle\ar@{<-}[r]\ar@{^(->}[d]&\langle
a\rangle\ar[r]\ar@{^(->}[d]&\langle\ol{a}\rangle\neq 0\ar@<-2ex>@{^(->}[d]\\
\Kh^{-n,q}(K;\Field)\ar[r]^-{\alpha}&\Kh^{0,q}(K;\Field)\ar@{<-}[r]& H_0(\Filt_q;\Field)\ar[r]& H_0(\BNcx;\Field). 
}               
\]
while $q$ is \emph{$\alpha$-full} if the following configuration exists:
\[
\xymatrix{
\langle\wt{a},\wt{b}\rangle\ar[r]\ar@{^(->}[d]&\langle\wh{a},\wh{b}\rangle\ar@{<-}[r]\ar@{^(->}[d]&\langle
a,b\rangle\ar[r]\ar@{^(->}[d]&\langle\ol{a},\ol{b}\ar@{=}[d]\rangle\\
\Kh^{-n,q}(K;\Field)\ar[r]^-{\alpha}&\Kh^{0,q}(K;\Field)\ar@{<-}[r]& H_0(\Filt_q;\Field)\ar[r]& H_0(\BNcx;\Field). 
}               
\]
\end{defn}

\begin{defn}\label{def:rpm-spm}
  For a knot $K$, define $r_+^\alpha(K)=\max\set{q\in 2\Z+1}{q\text{ is
$\alpha$-half-full}}+1$ and $s_+^\alpha(K)=\max\set{q\in 2\Z+1}{q\text{ is
$\alpha$-full}}+3$. If $\ol{K}$ denotes the mirror of $K$, define
$r_-^\alpha(K)=-r_+^\alpha(\ol{K})$ and $s_-^\alpha(K)=-s_+^\alpha(\ol{K})$.
\end{defn}

It is immediate from their definitions that $s_\pm^\alpha$ and
$r_\pm^\alpha$ are knot invariants.  The reason they are of interest
is the following:

\begin{theorem}\label{thm:slice-bound}
  Let $\alpha$ be a stable cohomology operation and $S$ a connected,
  embedded cobordism in $[0,1]\times S^3$ from $K_1$ to $K_2$. If $S$
  has genus $g$ then
  \begin{align*}
  |s^{\al}_+(K_1)-s^{\al}_+(K_2)|&\leq 2g &   |s^{\al}_-(K_1)-s^{\al}_-(K_2)|&\leq 2g\\
  |r^{\al}_+(K_1)-r^{\al}_+(K_2)|&\leq 2g &   |r^{\al}_-(K_1)-r^{\al}_-(K_2)|&\leq 2g.  
  \end{align*}
  So, each of the number
  $|r_+^\alpha|$, $|r_-^\alpha|$, $|s_+^\alpha|$, $|s_-^\alpha|$
  gives a slice genus bound:
  \[
  \max\{|r_+^\alpha(K)|,|r_-^\alpha(K)|,|s_+^\alpha(K)|,|s_-^\alpha(K)|\}\leq 2g_4(K).
  \]
\end{theorem}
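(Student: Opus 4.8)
The plan is to adapt Rasmussen's proof that $|s|$ is bounded by twice the slice genus, substituting for its two key ingredients the facts established in this paper that stable cohomology operations commute with the cobordism maps on Khovanov homology, and that the cobordism maps on the Khovanov spectra are compatible with the cobordism maps on the filtered complex $(\BNcx_*,\Filt_\bullet)$. First I would reduce to a single inequality: it is enough to prove $r_+^\alpha(K_1)-r_+^\alpha(K_2)\leq 2g$ and $s_+^\alpha(K_1)-s_+^\alpha(K_2)\leq 2g$, since interchanging $K_1$ and $K_2$ (using the cobordism $S$ read backwards, again connected of genus $g$) gives the reverse inequalities and hence the absolute-value bounds for $r_+^\alpha$ and $s_+^\alpha$, and applying those bounds to the mirror cobordism $\ol S$ from $\ol{K_1}$ to $\ol{K_2}$, together with $r_-^\alpha(K)=-r_+^\alpha(\ol K)$ and $s_-^\alpha(K)=-s_+^\alpha(\ol K)$, gives the bounds for $r_-^\alpha$ and $s_-^\alpha$. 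As $g\in\Z$ and, by \Definition{rpm-spm}, $r_+^\alpha$ and $s_+^\alpha$ are the maximum over $\alpha$-half-full (resp.\ $\alpha$-full) odd integers plus a constant, these two inequalities follow once I show: if $q$ is $\alpha$-half-full (resp.\ $\alpha$-full) for $K_1$, then $q-2g$ is $\alpha$-half-full (resp.\ $\alpha$-full) for $K_2$.

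The core step is to transport a witnessing configuration along the cobordism. A connected, embedded, genus-$g$ cobordism $S$ from $K_1$ to $K_2$ has $\chi(S)=-2g$ and induces a map $\phi_S\colon\Kh^{i,j}(K_1)\to\Kh^{i,j-2g}(K_2)$ coming from a map of the Khovanov spectra, together with a filtered chain map $\phi_S\colon\BNcx(K_1)\to\BNcx(K_2)$ with $\phi_S(\Filt_j)\sbseq\Filt_{j-2g}$. I would first record the compatibilities I need: the chain map $\phi_S$ descends, on each subquotient $\Kh^{*,j}=H_*(\Filt_j/\Filt_{j+2})$, to the cohomology-level $\phi_S$, so that $\phi_S$ intertwines the three structure maps $\Kh^{-n,q}\xrightarrow{\alpha}\Kh^{0,q}$, $H_0(\Filt_q)\to\Kh^{0,q}$, $H_0(\Filt_q)\to H_0(\BNcx)$ of \Definition{full} with their counterparts at level $q-2g$ for $K_2$ (for the $\alpha$-arrow this is exactly the commutation of $\alpha$ with cobordism maps). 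Then, given $\tilde a,\tilde b\in\Kh^{-n,q}(K_1)$, $\wh a,\wh b\in\Kh^{0,q}(K_1)$, $a,b\in H_0(\Filt_q\BNcx(K_1))$, $\ol a,\ol b\in H_0(\BNcx(K_1))$ witnessing that $q$ is $\alpha$-full for $K_1$, I would apply $\phi_S$ to all eight elements to obtain $\tilde a',\tilde b'\in\Kh^{-n,q-2g}(K_2)$, likewise $\wh a',\wh b'$, then $a',b'\in H_0(\Filt_{q-2g}\BNcx(K_2))$ (here using $\phi_S(\Filt_q)\sbseq\Filt_{q-2g}$), and $\ol a',\ol b'\in H_0(\BNcx(K_2))$. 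Conditions (1)--(3) of $\alpha$-fullness for $K_2$ at level $q-2g$ are then immediate from the compatibilities above; the $\alpha$-half-full case is identical after deleting the $b$-elements.

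This leaves condition (4): $\{\ol a',\ol b'\}$ should be a basis of $H_0(\BNcx(K_2))\cong\Field\oplus\Field$ (in the half-full case, $\ol a'$ should be nonzero). Since $\{\ol a,\ol b\}$ is a basis of $H_0(\BNcx(K_1))\cong\Field\oplus\Field$, both reduce to showing that $\phi_S\colon H_0(\BNcx(K_1))\to H_0(\BNcx(K_2))$ is an isomorphism when $S$ is connected. I expect this --- the Bar-Natan analogue of a lemma of Rasmussen~\cite{Ras-kh-slice} --- to be the main obstacle. The argument: compose $\phi_S$ with the map induced by the reversed cobordism to get the map of a connected genus-$2g$ cobordism from $K_1$ to itself; the endomorphism of $H_0(\BNcx(K_1))\cong\Field\oplus\Field$ induced by such a cobordism is multiplication by a unit, because each handle contributes the genus map of the Bar-Natan Frobenius algebra, and that map is a unit there --- this is precisely the robustness of the Bar-Natan deformation over every field, the feature that fails for Lee's deformation in characteristic $2$. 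Hence $\phi_S$ is injective, and since $H_0(\BNcx)$ of a knot is two-dimensional, $\phi_S$ is then an isomorphism.

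Besides this, the remaining work is bookkeeping: checking that the cobordism map can be realized simultaneously on the Khovanov spectra and on the filtered complex with the stated quantum-grading and filtration shifts --- in particular the sharp bound $\phi_S(\Filt_j)\sbseq\Filt_{j-2g}$, which comes from decomposing $S$ into births, saddles, and isotopies (a connected cobordism between knots admits such a movie with no deaths, and births only improve the filtration shift), as in \cite{Ras-kh-slice} --- and that these realizations are compatible with the spectral-sequence structure maps and with $\alpha$, building on the functoriality results of \cite{RS-khovanov} and \cite{Bar-kh-tangle-cob}.
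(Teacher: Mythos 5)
Your proposal is correct and follows essentially the same route as the paper: push the witnessing configuration forward along the filtered cobordism map, using \Theorem{coho-op-commute} for the $\alpha$-arrow and the fact that a connected cobordism induces an isomorphism on $H_0(\BNcx)$ for the basis condition, then reverse and mirror the cobordism and evaluate on the unknot. The one step you improvise---that a connected cobordism induces an isomorphism on $H_0(\BNcx)$---is exactly \Proposition{S-quasi-iso}, which the paper obtains by running Rasmussen's argument in Turner's diagonalizing basis rather than by your compose-with-the-reverse heuristic (which, as stated, would require knowing the composite cobordism acts by multiplication by the handle element, i.e.\ essentially the same movie analysis).
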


(Note that if $s_\pm^\alpha$ (respectively $r_\pm^\alpha$) differs
from $s$, then $s_\pm^\alpha$ is not a slice \emph{homomorphism}, as two
homomorphisms to $\ZZ$ cannot have a bounded difference.)

Of course, the numbers $r_\pm^\alpha$ and $s_\pm^\alpha$ are only
interesting if they sometimes give better information than
$s$. In \Section{computations} we show:
\begin{theorem}\label{thm:sq2-good}
  Let $\Sq^2\co \wt{H}^*(\cdot; \F_2)\to \wt{H}^{*+2}(\cdot; \F_2)$
  denote the second Steenrod square. Then there are knots $K$ so that
  $|s_+^{\Sq^2}(K)| > |s(K)|$.
\end{theorem}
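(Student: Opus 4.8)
The plan is to pin down exactly when $s_+^{\Sq^2}$ can exceed $s$, reduce that to a machine-checkable condition on a single knot, and then exhibit such a knot. The first step is structural and works for any stable operation $\al$: for every knot $K$ one has $s_+^\al(K)\in\{s(K),\,s(K)+2\}$, and $s_+^\al(K)=s(K)+2$ exactly when the odd integer $q=s(K)-1$ is $\al$-full. Indeed, being $\al$-full at $q$ forces $H_0(\Filt_q;\F_2)\to H_0(\BNcx;\F_2)$ to be onto, so by the definition of $s$ necessarily $q\le s(K)-1$; on the other hand $q=s(K)-3$ is always $\al$-full---take every tilde-class to be $0$, so every hatted class is $\al(0)=0$, lift a basis of $H_0(\BNcx;\F_2)$ through the surjection from $H_0(\Filt_{s(K)-1};\F_2)$, and push it forward into $H_0(\Filt_{s(K)-3};\F_2)$, where its image in $\Kh^{0,s(K)-3}$ vanishes. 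Since the top candidate $q=s(K)-1$ either is or is not $\al$-full while $q=s(K)-3$ always is, the dichotomy follows. (The input about $H_0(\BNcx;\F_2)$ as a filtered space---two distinguished generators, each of filtration level $s(K)-1$, with sum of level $s(K)+1$---is part of the setup recalled around \Proposition{differ-by-two}.)

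For the second step I unwind ``$q=s(K)-1$ is $\Sq^2$-full''. Set $s=s(K)$, let $\pi\co H_0(\Filt_{s-1};\F_2)\to\Kh^{0,s-1}(K;\F_2)$ and $i_*\co H_0(\Filt_{s-1};\F_2)\to H_0(\BNcx;\F_2)$ be the natural maps, and write $\Sq^2$ also for the operation $\Kh^{-2,s-1}(K;\F_2)\to\Kh^{0,s-1}(K;\F_2)$. Unwinding \Definition{full}, $q=s-1$ is $\Sq^2$-full iff $i_*\bigl(\pi^{-1}(\image\Sq^2)\bigr)=H_0(\BNcx;\F_2)$. Since $i_*(\ker\pi)$ is already $1$-dimensional (the span of the sum of the two distinguished generators), it is enough to exhibit one class in $\pi^{-1}(\image\Sq^2)$ that $i_*$ carries to a distinguished generator; in particular it suffices that $\Sq^2\co\Kh^{-2,s-1}(K;\F_2)\to\Kh^{0,s-1}(K;\F_2)$ be \emph{surjective}, and, when $\Kh^{0,s-1}(K;\F_2)$ is $1$-dimensional, that it be merely \emph{nonzero}. (If $\Sq^2$ is nonzero but not onto, one must further locate the class of a distinguished generator inside $\Kh^{0,s-1}$, i.e.\ compute the relevant page of the Bar-Natan spectral sequence; I would dodge this by picking the example so $\Kh^{0,s-1}$ is $1$-dimensional.)

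The third step is a search. I want a knot $K$ with $s(K)\ge0$, with $\Kh^{-2,s(K)-1}(K;\F_2)\ne0$, and with $\Sq^2$ surjective---or nonzero, if $\Kh^{0,s(K)-1}(K;\F_2)$ is $1$-dimensional---in bidegree $(-2,s(K)-1)$; by the previous steps such a $K$ satisfies $s_+^{\Sq^2}(K)=s(K)+2$, hence $|s_+^{\Sq^2}(K)|=s(K)+2>s(K)=|s(K)|$, proving \Theorem{sq2-good}. Any such $K$ is necessarily non-alternating---indeed not $\F_2$-thin---and not a positive knot, since in either of those cases $\Kh^{-2,s-1}=0$; so the example cannot come from the easiest families. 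Concretely: over knots from the standard tables up to a workable crossing number (and a few other families), compute $\Kh(K;\F_2)$ and $s(K)$ (from the Bar-Natan complex), keep those with $\Kh^{-2,s(K)-1}\ne0$, and evaluate $\Sq^2$ on the survivors using its combinatorial formula on the Khovanov cochain complex; one then points to a knot in this list for which $\Sq^2$ is nonzero---in fact surjective---in bidegree $(-2,s(K)-1)$.

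The crux is the third step. Nonzero $\Sq^2$ on Khovanov homology is scarce and has only ever been found by machine, and requiring it to land in the \emph{single} bidegree $(-2,s(K)-1)$ with $s(K)\ge0$---which forces $K$ to be simultaneously non-thin and non-positive---is a strong extra constraint: one either gets lucky in the tables or must build an example, e.g.\ via connected sums while carrying the Cartan formula for $\Sq^2$ along with $s(K_1\#K_2)=s(K_1)+s(K_2)$. A secondary difficulty, relevant only if $\Sq^2$ cannot be made surjective, is that $\Sq^2$ is a purely Khovanov-level computation while $\al$-fullness also sees the Bar-Natan/Lee filtration, so one would have to match up a Khovanov-level calculation with a Lee-level spectral-sequence calculation to certify that a distinguished generator lies in $\image\Sq^2+i_*(\ker\pi)$; choosing the example with $1$-dimensional $\Kh^{0,s(K)-1}$ avoids this.
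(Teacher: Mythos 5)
Your reduction is exactly the one the paper uses: \Lemma{rs-increase-by-2} gives $s_+^{\al}(K)\in\{s^{\Field}(K),s^{\Field}(K)+2\}$ with the top value attained precisely when $q=s^{\Field}(K)-1$ is $\al$-full, and your unwinding of $\Sq^2$-fullness at that level --- it suffices that $\Sq^2\co\Kh^{-2,s-1}(K;\F_2)\to\Kh^{0,s-1}(K;\F_2)$ be surjective, since $s-1=s^{\Field}_{\min}$ makes $H_0(\Filt_{s-1};\F_2)\to H_0(\BNcx;\F_2)$ surjective --- is also the paper's argument, including the requirement $s^{\Field}(K)\geq 0$ so that the inequality survives the absolute values.

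The gap is that you never produce the knot, and for an existence theorem that is the entire content: your third step is a description of a search, not its outcome. You correctly identify that this is the crux and that the candidate must be non-thin (so that $\Kh^{-2,s-1}\neq 0$), but ``one then points to a knot in this list'' is not a proof. The paper's answer is $K=9_{42}$: there $s^{\F_2}(K)=0$, both $\Kh^{-2,-1}(K;\F_2)$ and $\Kh^{0,-1}(K;\F_2)$ are one-dimensional (so your preferred ``nonzero implies surjective'' shortcut applies), and the nonvanishing of $\Sq^2$ in that bidegree is taken from the explicit computation in the companion paper \cite{RS-steenrod} (independently confirmed by Seed's knotkit). Without citing or reproducing such a computation --- which cannot be read off from the Khovanov homology groups alone, since $\Sq^2$ is not determined by them --- the argument does not establish the theorem.
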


\begin{theorem}\label{thm:sq1-good}
  Let $\Sq^1\co \wt{H}^*(\cdot; \F_2)\to \wt{H}^{*+1}(\cdot; \F_2)$
  denote the first Steenrod square. Then there are knots $K$ so that
  $s_+^{\Sq^1}(K) \neq s^{\Field_2}(K)$.
\end{theorem}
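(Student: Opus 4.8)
The plan is to convert \Definition{full}, specialized to $\alpha=\Sq^1$, into a condition involving only \emph{integral} Khovanov homology, and then to exhibit a knot meeting that condition.

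First I would use the fact that, as a stable cohomology operation with $\F_2$ coefficients, $\Sq^1$ equals the Bockstein $\beta$ of the sequence $0\to\Z/2\to\Z/4\to\Z/2\to0$. Since $\wt H^*(\KhSpace^j(K);\Z)=\Kh^{*,j}(K;\Z)$ by construction of the Khovanov spectra, the map $\Sq^1\co\Kh^{*,j}(K;\F_2)\to\Kh^{*+1,j}(K;\F_2)$ is the algebraic Bockstein of the integral cochain complex computing $\Kh^{*,j}(K;\Z)$; in particular, $\image\bigl(\Sq^1\co\Kh^{-1,q}(K;\F_2)\to\Kh^{0,q}(K;\F_2)\bigr)$ is the mod-$2$ reduction of the $2$-torsion subgroup of $\Kh^{0,q}(K;\Z)$. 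Combined with the model of $(\BNcx,\Filt_\bullet)$ from \Section{BN-cx}, this makes every clause of \Definition{full} algorithmically checkable from data — namely integral Khovanov homology and the Bar-Natan spectral sequence — that is tabulated for small knots.

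Second I would pin down the possible values of $s^{\Sq^1}_+$. Write $s:=s^{\F_2}(K)$, and recall from \Proposition{differ-by-two} that the two surviving generators of $H_0(\BNcx;\F_2)\cong\F_2^2$ are detected at quantum levels $s-1$ and $s+1$. For each odd $q\le s-3$ both generators lift into $\Filt_{s-1}\subseteq\Filt_{q+2}$, so their images in $\Kh^{0,q}(K;\F_2)=H_0(\Filt_q/\Filt_{q+2};\F_2)$ vanish, and taking $\wt a=\wt b=0$ shows $q$ is $\Sq^1$-full; hence $s^{\Sq^1}_+(K)\ge s$ for every knot. For each odd $q\ge s+1$ the image of $H_0(\Filt_q;\F_2)\to H_0(\BNcx;\F_2)$ is at most $1$-dimensional, so no basis $\ol a,\ol b$ exists and $q$ is not $\Sq^1$-full. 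Therefore $s^{\Sq^1}_+(K)\in\{s,\,s+2\}$, and $s^{\Sq^1}_+(K)=s+2$ if and only if $q=s-1$ is $\Sq^1$-full. The top generator imposes no condition there, since it can be lifted into $\Filt_{s+1}$, where its leading term in $\Kh^{0,s-1}$ is zero; so $s^{\Sq^1}_+(K)=s+2$ holds precisely when some lift in $H_0(\Filt_{s-1};\F_2)$ of the bottom generator has leading term in $\Kh^{0,s-1}(K;\F_2)$ lying in the mod-$2$ reduction of the $2$-torsion of $\Kh^{0,s-1}(K;\Z)$.

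It then remains to produce a knot $K$ realizing this condition: one carrying a $\Z/2$ summand of $\Kh^{0,s(K)-1}(K;\Z)$ whose mod-$2$ reduction survives the $\F_2$ Bar-Natan spectral sequence to a lift of the bottom surviving generator. I would find such a $K$ by scanning tables of integral Khovanov homology and running the Bar-Natan spectral sequence on the candidates; \Theorem{slice-bound} then certifies that $s^{\Sq^1}_+$ is a genuine knot invariant, and the computation exhibits a knot with $s^{\Sq^1}_+(K)=s(K)+2\ne s^{\F_2}(K)$. I expect this last step to be the main obstacle: $2$-torsion in Khovanov homology is abundant, but there is no structural reason that a torsion class at bidegree $(0,s-1)$ should carry the surviving Bar-Natan generator rather than cancel in the spectral sequence, so one is reduced to a finite computer search rather than a uniform argument.
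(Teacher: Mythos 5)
Your reductions are sound: the identification of $\Sq^1$ with the Bockstein, the bound $s^{\Sq^1}_+(K)\in\{s^{\F_2}(K),\,s^{\F_2}(K)+2\}$ (this is exactly \Lemma{rs-increase-by-2}), and the observation that everything hinges on whether $q=s^{\F_2}(K)-1$ is $\Sq^1$-full, with the top surviving generator imposing no condition since it lifts to $\Filt_{s+1}$. But the theorem is an existence statement, and you never exhibit a knot: you explicitly defer the ``main obstacle'' to an unspecified computer search. That is a genuine gap, not a routine verification --- for thin knots (and more generally whenever $\Kh^{0,s-1}(K;\Z)$ has no $2$-torsion) your criterion automatically fails, so nothing in the argument guarantees that a witness exists. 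Moreover your criterion as stated requires identifying the leading term of an actual surviving Bar-Natan cycle in $\Kh^{0,s-1}(K;\F_2)$ and testing it against the reduction of the $2$-torsion; this is chain-level information about the Bar-Natan spectral sequence that is not read off from tabulated homology groups, so even the search you propose is harder than you suggest.

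The paper closes this gap with $K=K14n19265$ by a more structural route that avoids computing $\Sq^1$ on, or even identifying, any particular class. Seed's computations give $s^{\F_2}(K)=-2$ while $s^{\QQ}(K)=0$; since $s^{\QQ}_{\min}(K)=-1\geq -3$ and $H_*(\BNcx(K);\Z)$ is free, both $\F_2$-generators of $H_0(\BNcx)$ admit lifts to $H_0(\Filt_{-3}\BNcx(K);\Z)$, so their leading terms $\wh{a},\wh{b}\in\Kh^{0,-3}(K;\F_2)$ are reductions of integral classes and hence lie in $\ker\Sq^1$; a rank count on $\Kh(K;\Z)$ then shows $\ker\Sq^1=\image\Sq^1$ in bidegree $(0,-3)$, so $\wh{a},\wh{b}\in\image\Sq^1$ and $-3$ is $\Sq^1$-full. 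In other words, your test ``leading term lies in $\image\Sq^1$'' is replaced by the far more checkable pair of conditions ``leading term lies in $\ker\Sq^1$'' (automatic from an integral lift, which exists because $s^{\QQ}\neq s^{\F_2}$) and ``$\ker\Sq^1=\image\Sq^1$ there'' (a rank computation on $\Kh(K;\Z)$). If you adopt this refinement, your search criterion becomes effective and the proof can be completed.
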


A key step in the proof of \Theorem{slice-bound} is that the cobordism
maps on Khovanov homology commute with cohomology operations:
\begin{theorem}\label{thm:coho-op-commute}
  Let $S$ be a smooth cobordism in $[0,1]\times S^3$ from $L_1$ to
  $L_2$, and let $F_S\co \Kh^{*,*}(L_1)\to \Kh^{*,*+\chi(S)}(L_2)$ be the map
  associated to $S$ in~\cite{Jac-kh-cobordisms} (see
  also~\cite{Kho-kh-tangles, Bar-kh-tangle-cob, Kho-kh-cob}). Let
  $\alpha\co \wt{H}^*(\cdot; \Field)\to \wt{H}^{*+n}(\cdot; \Field)$ be a stable
  cohomology operation. Then the following diagram commutes up to
  sign:
  \begin{equation}\label{eq:coho-op-commute}
  \vcenter{\hbox{
      \xymatrix@C=2.2ex{
        \Kh^{i,j}(L_1;\Field)=\wt{H}^i(\KhSpace^j(L_1);\Field)\ar@<-10ex>[d]_{F_S}
        \ar[r]^-{\alpha}&\wt{H}^{i+n}(\KhSpace^j(L_1);\Field)=\Kh^{i+n,j}(L_1;\Field)
        \ar@<8ex>[d]^{F_S}\\
        \mathllap{\Kh}{}^{i,j+\chi(S)}(L_2;\Field)=\wt{H}^i(\KhSpace^{j+\chi(S)}(L_2);\Field)
        \ar[r]^-{\alpha}&\wt{H}^{i+n}(\KhSpace^{j+\chi(S)}(L_2);\Field)=\Kh^{i+n,j+\chi(S)}(L_2;\Field).
      }
    }}
  \end{equation}
\end{theorem}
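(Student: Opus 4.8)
The plan is to reduce the statement to elementary cobordisms and then invoke naturality of stable cohomology operations. The key general principle is that a stable cohomology operation $\alpha$ is, by construction, natural for arbitrary maps of spectra: if $f\co X\to Y$ is any map of spectra then $\alpha\circ f^*=f^*\circ\alpha$ on $\wt H^*(\cdot;\Field)$, with no sign. Consequently, whenever a cobordism map $\Kh^{*,*}(L_1)\to\Kh^{*,*}(L_2)$ is induced (after the evident grading shift) by a map, or by a zig-zag of maps and stable equivalences, of Khovanov spectra, the square in \Equation{coho-op-commute} commutes on the nose for that map. By the cobordism--invariance theorem of \cite{Jac-kh-cobordisms} (see also \cite{Bar-kh-tangle-cob, Kho-kh-cob}), $F_S$ is, up to an overall sign, the composite of the maps attached to a movie presentation of $S$, i.e.\ to a sequence of Reidemeister moves, births, deaths and saddles; so it is enough to treat each of these four types of elementary cobordism, after which one composes and collects the resulting sign. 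Note that $\alpha$ does not touch the quantum grading $j$, so the shift by $\chi(S)$ plays no role in the commutation and only the homological grading needs to be tracked.

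Reidemeister moves and births/deaths are the easy cases. For a Reidemeister move, $F_S$ is the isomorphism on $\Kh$ induced by a stable homotopy equivalence of the two Khovanov spectra, which is exactly the content of the invariance theorem of \cite{RS-khovanov}; naturality of $\alpha$ then gives commutativity. For a birth or a death, present the affected link as a split union $L\sqcup U$ with $U$ a crossingless unknot. Since $U$ never interacts with the rest of the diagram, the Khovanov chain complex splits as a direct sum of subcomplexes $C(L\sqcup U)\cong\bigl(C(L)\otimes\langle 1\rangle\bigr)\oplus\bigl(C(L)\otimes\langle X\rangle\bigr)$, and the flow-category data underlying the construction of \cite{RS-khovanov} splits correspondingly (no moduli space changes the label on the $U$-circle); hence $\KhSpace^j(L\sqcup U)\simeq\KhSpace^{j-1}(L)\vee\KhSpace^{j+1}(L)$. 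Under this splitting the birth map is the inclusion of a wedge summand and the death map the projection onto the other, each a genuine map of spectra, so $\alpha$ commutes with both.

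The essential case is a single saddle, say from $L_1$ to $L_2$. Near the saddle, $L_1$ and $L_2$ are the two resolutions of a crossing $c$; choosing the sign of $c$ so that, in the usual conventions, its $0$-resolution is $L_1$ and its $1$-resolution is $L_2$, and inserting $c$ into $L_1$, we obtain a diagram $L'$ whose Khovanov complex is, up to grading shifts, the mapping cone of the saddle chain map $C(L_1)\to C(L_2)$ (the relevant cube edge of $L'$). This mapping-cone structure is visible already at the level of cubes, and its realization in the framework of \cite{RS-khovanov} gives a cofiber sequence of Khovanov spectra
\[
\KhSpace^{q_1}(L_1)\longrightarrow\KhSpace^{q}(L')\longrightarrow\KhSpace^{q_2}(L_2)\xrightarrow{\ \delta\ }\Sigma\,\KhSpace^{q_1}(L_1)
\]
(a spectrum-level lift of the unoriented skein exact triangle, the gradings $q_1,q,q_2$ being related by explicit shifts depending on the sign of $c$ and on the change in the number of negative crossings), whose connecting map $\delta$ induces on $\wt H^*$ precisely the saddle cobordism map $F_S$ of \cite{Jac-kh-cobordisms}, up to sign and up to the recorded grading shift; if the two resolutions of $c$ are identified with $L_1,L_2$ in the opposite order one uses a crossing of the other sign and the rotated cofiber sequence. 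Granting this, $\delta$ is an honest map of spectra, so $\alpha$ commutes with $\delta^*$ exactly and hence with $F_S$ up to sign.

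Putting the pieces together, for a general $S$ one has $F_S=\pm F_{e_k}\circ\dots\circ F_{e_1}$ with each $F_{e_i}$ commuting with $\alpha$ up to sign, whence $\alpha\circ F_S=\pm F_S\circ\alpha$, which is the assertion of the theorem. The one genuinely technical point I expect is the last identification in the saddle case: verifying, with the correct signs and grading shifts, that the short exact sequence of Khovanov complexes obtained by resolving a crossing lifts, within the construction of \cite{RS-khovanov}, to a cofiber sequence of Khovanov spectra whose connecting map is the Bar--Natan/Khovanov/Jacobsson saddle map. Everything else is naturality of $\alpha$ together with bookkeeping of the sign ambiguity that $F_S$ already carries.
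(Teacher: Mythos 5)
Your overall strategy is the same as the paper's: factor $S$ into elementary cobordisms, realize each elementary cobordism map by a genuine map (or Puppe/connecting map) of Khovanov spectra, and invoke naturality of stable cohomology operations. The saddle case you flag as the technical heart is handled in the paper exactly as you sketch: $\KhFlowCat(L_0)$ is a downward-closed subcategory of $\KhFlowCat(L')$ with upward-closed complement $\KhFlowCat(L_1)$, giving a cofibration sequence of realizations whose Puppe map is \emph{defined} to be the saddle map, and the identification of the induced map on cohomology with the connecting homomorphism of the corresponding short exact sequence of Khovanov complexes is then immediate. Births and deaths via the wedge splitting are likewise as in the paper (modulo the direction of the maps: at the spectrum level the cup map is the projection onto a wedge summand and the cap map is the inclusion, inducing the usual inclusion and projection on cochains).

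The genuine gap is in the step you call easy. For a Reidemeister move you assert that $F_S$ ``is the isomorphism on $\Kh$ induced by a stable homotopy equivalence of the two Khovanov spectra, which is exactly the content of the invariance theorem of~\cite{RS-khovanov}.'' It is not: the invariance theorem produces \emph{some} stable homotopy equivalence $\Phi$, and naturality of $\alpha$ only gives that $\alpha$ commutes with $\Phi^*$. To conclude that $\alpha$ commutes with the standard Reidemeister isomorphism $F$ of~\cite{Jac-kh-cobordisms} (which is what enters the composite defining $F_S$), one must prove $\Phi^*=\pm F$, and this is not automatic; it is the main content of the paper's Section~3.2. The argument there first disposes of the dependence on auxiliary data (embeddings, framings, and in particular sign assignments/orderings of crossings, where the induced map is a connecting homomorphism that must be matched with the usual gauge-transformation isomorphism), and then shows that both $\Phi^*$ and $F$ are induced by isomorphisms of Khovanov's tangle bimodules for the invertible tangles involved in a Reidemeister move, so that by~\cite[Corollary 2]{Kho-kh-cob} they agree up to sign. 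Without some argument of this kind your proof establishes commutation of $\alpha$ with a composite of \emph{a priori} different isomorphisms, not with $F_S$ itself.
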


In particular:
\begin{thmcor}\label{thmcor:steenrod-alg}
  Let $\mathcal{A}_p$ denote the modulo-$p$ Steenrod algebra. Then the
  cobordism map $F_S\co \Kh^{*,*}(L_1;\F_p)\to
  \Kh^{*,*+\chi(S)}(L_2;\F_p)$ associated to a smooth cobordism $S$
  from $L_1$ to $L_2$ is a homomorphism of $\mathcal{A}_p$-modules;
  that is, $\Kh(\cdot;\F_p)$ is a projective functor of $\mc{A}_p$-modules.
\end{thmcor}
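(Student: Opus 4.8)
The plan is to deduce this corollary directly from \Theorem{coho-op-commute}, since essentially all of the homotopy-theoretic content is already contained there. Recall that the mod-$p$ Steenrod algebra $\mathcal{A}_p$ \emph{is} the algebra of stable cohomology operations on reduced $\F_p$-cohomology: its degree-$0$ part is $\F_p\cdot\Id$, and in positive degrees it is generated as an $\F_p$-algebra by the operations $\Sq^i$ (when $p=2$), respectively the Bockstein $\beta$ together with the Steenrod powers $P^i$ (when $p$ is odd), each of which is a stable cohomology operation in the sense of \Theorem{coho-op-commute}. Hence, for each quantum grading $j$, the identification $\Kh^{*,j}(L;\F_p)=\wt{H}^*(\KhSpace^j(L);\F_p)$ endows $\Kh^{*,j}(L;\F_p)$ with a natural $\mathcal{A}_p$-module structure (the Steenrod operations preserve $j$), and $\bigoplus_j\Kh^{*,j}(L;\F_p)$ becomes a $j$-graded $\mathcal{A}_p$-module; the generation statement is not strictly needed, since \Theorem{coho-op-commute} treats all stable operations uniformly, but it makes clear that the operations in play are the classical ones.

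With this in hand the argument is almost purely formal. Since $F_S$ is $\F_p$-linear by construction, it commutes on the nose with the degree-$0$ operations (scalar multiplication). For a homogeneous $\theta\in\mathcal{A}_p$ of positive degree $n$, the operation $\theta\co\wt{H}^*(\cdot;\F_p)\to\wt{H}^{*+n}(\cdot;\F_p)$ is a stable cohomology operation, so \Theorem{coho-op-commute} applies and gives $F_S\circ\theta=\pm\,\theta\circ F_S$ on each summand $\Kh^{*,j}(L_1;\F_p)$; there is no additional Koszul-type sign, because $F_S$ has degree $0$ in the cohomological grading $i$, shifting only the quantum grading by $\chi(S)$. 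Thus $F_S$ intertwines the $\mathcal{A}_p$-actions up to an overall sign on each homogeneous piece, which is exactly the statement that $\Kh(\cdot;\F_p)$ is a projective functor of $\mathcal{A}_p$-modules. To upgrade ``projective'' to an honest module homomorphism I would check that these signs are coherent: writing $\theta$ as an $\F_p$-combination of monomials $\alpha_1\cdots\alpha_r$ in the generators, the sign attached to such a monomial is the product of the signs attached to the $\alpha_k$, and I expect, from the structure of the proof of \Theorem{coho-op-commute}, that each of those signs depends only on $\deg\alpha_k$ and on $S$; consequently the total sign depends only on $\deg\theta$ and $S$, so it is constant on each homogeneous component $\mathcal{A}_p^n$ and survives passing to $\F_p$-linear combinations of Steenrod monomials. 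In particular this sign is $+1$ whenever $\chi(S)$ is even --- for instance whenever $L_1$ and $L_2$ are both knots --- so in that case $F_S$ is literally a homomorphism of $\mathcal{A}_p$-modules.

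The only step that is not immediate is this last bit of sign bookkeeping, so that is where I would concentrate the effort: one wants to know that the $\pm$ in \Theorem{coho-op-commute} does not vary within a fixed degree of $\mathcal{A}_p$ for a fixed cobordism $S$, so that the commutation relation is preserved under forming $\F_p$-linear combinations. Granting that --- or, at worst, accepting the conclusion in the ``projective'' sense, which is how the corollary is stated --- the result follows with no further work. I do not anticipate any genuine difficulty here; everything substantive has already been done in \Theorem{coho-op-commute}.
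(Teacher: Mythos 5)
Your proposal is correct and takes the same route as the paper, whose entire proof is ``This is immediate from \Theorem{coho-op-commute}'': one applies that theorem to each homogeneous stable operation and notes that the residual sign ambiguity is exactly what the word ``projective'' in the statement absorbs (and is vacuous for $p=2$ anyway). The extra sign bookkeeping you sketch is not needed for the corollary as stated, so you can stop once you have invoked the theorem.
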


This paper is organized as follows. \Section{BN-cx} reviews the
Bar-Natan complex and the $s$-invariant defined using it, collecting
some results we will need later. \Section{cob-commutes-st} starts by
reviewing the construction of the Khovanov homotopy type. The rest of 
\Section{cob-commutes-st} is devoted
to proving \Theorem{coho-op-commute}. In \Section{new-s} we recall the
definitions of $r_\pm^\alpha$ and $s_\pm^\alpha$ and prove
\Theorem{slice-bound}. \Section{computations} contains some
computations of the invariants $r_\pm^\alpha(K)$ and $s_\pm^\alpha(K)$
for some particular $\alpha$'s and $K$'s, and in particular gives
proofs of \Theorem{sq2-good}
and \Theorem{sq1-good}. We conclude,
in \Section{further}, with some remarks and questions.

\thinspace\thinspace\thinspace
\noindent
\textbf{Acknowledgements.} We thank T.~Jaeger, M.~Khovanov, M.~Mackaay,
P.~Ozsv\'ath, J.~Rasmussen, C.~Seed, P.~Turner, and P.~Vaz for helpful
conversations. We also thank the referees for their time and their
helpful suggestions and corrections.

\section{The \texorpdfstring{$s$}{s} invariant from Bar-Natan's complex}\label{sec:BN-cx}
In this section we review some results on Bar-Natan's filtered
Khovanov complex. For our purposes, it serves as an analogue of the
Lee deformation but which works over any coefficient ring. Almost all
of the ideas and many of the results in this section are drawn
from~\cite{Lee-kh-endomorphism,Ras-kh-slice,Bar-kh-tangle-cob,Turner-kh-BNseq,MTV-Kh-s-invts},
but a few of the results have not appeared in exactly the form we need
them. We start by reviewing the filtered complex itself and its basic
properties in \Subsection{BN-cx-1}, and then turn
in \Subsection{BN-s-invt} to the properties of the $s$-invariants
obtained from the filtered complex.
\subsection{Bar-Natan's complex}\label{subsec:BN-cx-1}
Like the Khovanov complex, the Bar-Natan complex is obtained by
feeding the cube of resolutions for a knot diagram into a particular
Frobenius algebra:
\begin{definition}\label{def:BN-frob-alg}
  The \emph{Bar-Natan Frobenius algebra} is the deformation of
  $H^*(S^2)$ with
  multiplication $m$ given by
  \[
  x_+\otimes x_+\mapsto x_+ \qquad x_+\otimes x_-\mapsto x_-\qquad
  x_-\otimes x_+\mapsto x_- \qquad x_-\otimes x_-\mapsto \Red{x_-},
  \]
  comultiplication $\Delta$ by
  \[
  x_-\mapsto x_-\otimes x_-\qquad x_+\mapsto x_+\otimes x_-+x_-\otimes
  x_+\Red{-x_+\otimes x_+},
  \]
  unit $\iota$ by
  \[
  1\mapsto x_+
  \]
  and counit $\eta$ by
  \[
  x_+\mapsto 0 \qquad x_-\mapsto 1.
  \]
  (These maps are obtained from the usual Khovanov maps
  from~\cite{Kho-kh-categorification} by adding the terms in
  \Red{red}. These maps make sense over any ring; but we will continue
  to assume that we are working over a field $\Field$.)
\end{definition}

Feeding the cube of resolutions for a knot diagram $K$ into this Frobenius algebra gives a
chain complex $\BNcx(K;\Field)$, which we will sometimes call the Bar-Natan
complex. As an $\Field$-module, the underlying chain group is identified
with the Khovanov complex $\KhCx(K;\Field)$.  The Bar-Natan differential
increases homological grading on the Khovanov complex by $1$ and does
not decrease the quantum grading. Write
$\BNcx=\bigoplus_{i,j}\BNcx^{i,j},$ where $i$ denotes the homological
grading and $j$ denotes the quantum grading on the Khovanov
complex. Consider the subcomplex $\Filt_q=\bigoplus_{j\geq
  q}\BNcx^{i,j}$. This gives a (finite) filtration
\[
\cdots\sbs \Filt_{q+2}\sbs \Filt_q\sbs \Filt_{q-2}\sbs\cdots\subset \BNcx.
\]

\begin{citethm}\label{citethm:Turner}
  \cite{Turner-kh-BNseq} If $K$ is a knot then
  $H_*(\BNcx(K;\Field))=\Field\oplus \Field$, with both copies in homological grading
  $0$. More generally, for $L$ an $\ell$-component link,
  $H_*(\BNcx(L;\Field))\cong \Field^{2^{\ell}}$; a basis for
  $H_*(\BNcx(L;\Field))$ is canonically identified with the set of
  orientations for $L$.
\end{citethm}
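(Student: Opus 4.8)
The plan is to follow Lee's computation of her deformed homology~\cite{Lee-kh-endomorphism}, modified so that it goes through integrally and hence over an arbitrary field, as observed by Bar-Natan and Turner~\cite{Bar-kh-tangle-cob,Turner-kh-BNseq}. The first step is a change of basis in the Bar-Natan Frobenius algebra: set $\bm{a}=x_+-x_-$ and $\bm{b}=x_-$. A direct computation from \Definition{BN-frob-alg} shows that $\bm{a}$ and $\bm{b}$ are orthogonal idempotents with $\bm{a}+\bm{b}=x_+=\iota(1)$, that $m(\bm{a}\otimes\bm{a})=\bm{a}$, $m(\bm{b}\otimes\bm{b})=\bm{b}$, and $m(\bm{a}\otimes\bm{b})=m(\bm{b}\otimes\bm{a})=0$, and that $\Delta(\bm{a})=-\bm{a}\otimes\bm{a}$ and $\Delta(\bm{b})=\bm{b}\otimes\bm{b}$. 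The key point is that all of these formulas have coefficients in $\Z$; in particular no division by $2$ is required, unlike in Lee's original change of basis, so the rest of the argument is insensitive to $\characteristic(\Field)$.

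Next I would use this basis to split the complex. Labelling each circle in each resolution of a diagram of $L$ by $\bm{a}$ or $\bm{b}$ gives a basis of $\BNcx(L;\Field)$ with respect to which the edge maps of the cube of resolutions — which are assembled only from $m$ and $\Delta$ — send each basis element either to $\pm(\text{a basis element})$ or to $0$; moreover the label of a circle is ``continued'' along a merge or a split, and the only way to land in $0$ is to merge two circles carrying different labels. Hence $\BNcx(L;\Field)$ is the direct sum of subcomplexes indexed by the connected components of the graph whose vertices are the labelled resolutions and whose edges record the nonzero edge maps. Within any such component having more than one generator, connectivity produces an edge, i.e.\ an isomorphism between two of the rank-one $\Field$-summands; cancelling it (Gaussian elimination) preserves the chain homotopy type and strictly decreases the number of generators. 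Iterating, each component has the chain homotopy type of a complex with zero differential, so $H_*(\BNcx(L;\Field))$ is a free $\Field$-module whose rank is the total number of generators surviving in these reduced complexes.

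Finally I would identify the surviving generators with orientations, following Lee and Rasmussen~\cite{Lee-kh-endomorphism,Ras-kh-slice}. For each orientation $o$ of $L$ there is the oriented resolution $D_o$ (the $0$-resolution at every positive crossing and the $1$-resolution at every negative crossing), which sits in homological grading $0$ after the usual shift by $-n_-$, together with a canonical $\bm{a}/\bm{b}$-labelling of its circles determined by the nesting pattern of the orientation; write $\mathfrak{s}_o$ for the resulting generator. Copying Lee's argument, one checks: (i) $\mathfrak{s}_o$ is a cycle whose component reduces, under the cancellations above, to $\mathfrak{s}_o$ alone; (ii) distinct orientations lie in distinct components; and (iii) every component containing no $\mathfrak{s}_o$ is acyclic. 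Combined with the previous paragraph, this yields $H_*(\BNcx(L;\Field))\cong\Field^{2^{|L|}}$, concentrated in homological grading $0$, with basis $\{[\mathfrak{s}_o]\}$ indexed canonically by the orientations of $L$; the knot case is the specialization $\ell=1$. The main obstacle is step (iii) — equivalently, the combinatorial fact that the oriented resolutions are the only ones that survive cancellation — which is the technical core of Lee's theorem; the only genuinely new point to verify is that this combinatorics, and all the cancellations above, are valid over an arbitrary field, and this holds precisely because the change of basis is integral and every edge map is $\pm 1$ times an identification of rank-one summands, hence invertible over any $\Field$.
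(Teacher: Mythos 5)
Your proposal is correct and follows essentially the same route as the paper: the integral change of basis to the orthogonal idempotents $x_+-x_-$ and $x_-$ (the paper's $\xbar$ and $x_-$, \Equation{change-of-basis}), which diagonalizes the Frobenius algebra over any ring, after which Lee's component/cancellation argument identifying the surviving generators with orientations goes through unchanged. The paper's sketch defers that last combinatorial step to Lee exactly as you do, so the two arguments agree in substance and differ only in how much of Lee's computation is spelled out.
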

\begin{proof}[Sketch of proof.]
  This is a special case of~\cite[Propositions 2.3 and 2.4]{MTV-Kh-s-invts}.
  Following~\cite{Turner-kh-BNseq,MTV-Kh-s-invts}, write $\xbar = x_+ - x_-$ and
  consider the new basis $\{\xbar,x_-\}$ for the Bar-Natan Frobenius
  algebra. In this basis, the multiplication, the comultiplication, the
  unit and the counit become
  \begin{equation}\label{eq:change-of-basis}
    \begin{split}
      \xbar\otimes \xbar\stackrel{m}{\longrightarrow}\xbar\qquad
      \xbar\otimes x_-\stackrel{m}{\longrightarrow}0&\qquad
      x_-\otimes \xbar\stackrel{m}{\longrightarrow}0\qquad
      x_-\otimes x_-\stackrel{m}{\longrightarrow} x_-\\
      \xbar\stackrel{\Delta}{\longrightarrow} -\xbar\otimes \xbar&\qquad
      x_-\stackrel{\Delta}{\longrightarrow} x_-\otimes x_-\\
    1\stackrel{\iota}{\longrightarrow}&\,\xbar+x_-\\
    \xbar\stackrel{\eta}{\longrightarrow}-1&\qquad 
    x_-\stackrel{\eta}{\longrightarrow}1.
    \end{split}
  \end{equation}
Since this change of basis diagonalizes the Frobenius algebra, the rest of the argument
  from~\cite{Lee-kh-endomorphism} goes through essentially unchanged.
\end{proof}

The following is essentially due to Bar-Natan:
\begin{citethm}\label{citethm:BN-filt-cob}
  \cite{Bar-kh-tangle-cob} The filtered complex $\BNcx$ is
  projectively functorial
  with respect to link cobordisms, in the sense that given a link
  cobordism $S$ from $L_1$ to $L_2$ there is an associated chain map
  $F_S\co \BNcx(L_1)\to \BNcx(L_2)$, well-defined up to multiplication
  by $\pm 1$. The chain map $F_S$ preserves the
  homological grading and increases the quantum grading by at least
  $\chi(S)$, and is well-defined up to filtered
  homotopy\footnote{i.e., homotopies which decrease the
    homological grading by $1$ and increase the quantum grading by at
    least $\chi(S)$.} (and sign). The map of associated graded complexes induced
  by $F_S$ agrees with the usual cobordism map on Khovanov homology
  (as defined in~\cite{Jac-kh-cobordisms}).
\end{citethm}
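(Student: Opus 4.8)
The plan is to follow Bar-Natan's strategy: present the cobordism as a movie, define $F_S$ by composing maps assigned to the elementary frames, and then extract the three assertions (grading behaviour, well-definedness up to filtered homotopy and sign, and agreement on associated graded) by keeping track of the quantum filtration throughout. The cleanest organization is to work first inside Bar-Natan's category $\mathrm{Cob}^3_{/l}$ of formal $\Field$-linear combinations of dotted cobordisms modulo local relations, where the genuinely hard input --- Reidemeister and movie-move invariance --- is already available at the category level \cite{Bar-kh-tangle-cob} (see also \cite{Kho-kh-tangles,Kho-kh-cob}), and only then to apply the TQFT functor attached to the Bar-Natan Frobenius algebra of \Definition{BN-frob-alg}.

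In detail, I would first present $S$ as a movie $L_1 = D_0, D_1, \dots, D_N = L_2$, with each $D_k$ obtained from $D_{k-1}$ by a Reidemeister move or an oriented Morse modification (birth of a circle, death of a circle, or a saddle). To a Reidemeister frame assign the corresponding category-level chain homotopy equivalence (built from delooping and Gaussian elimination, which are entirely category-level operations, hence indifferent to which Frobenius algebra is later plugged in); to a birth, a death, or a saddle assign, respectively, a cup, a cap, or a saddle cobordism acting on the affected circles. Let $F_S$ be the composite, and then apply the TQFT functor of \Definition{BN-frob-alg}. Since the underlying chain group of $\BNcx$ is exactly this functor applied to the cobordism-category complex (and is literally $\KhCx$ as an $\Field$-module), this produces a chain map $F_S \co \BNcx(L_1) \to \BNcx(L_2)$; it preserves homological grading because every elementary cobordism does.

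Next I would do the grading bookkeeping. The cobordism category carries an Euler-characteristic grading, and the TQFT of \Definition{BN-frob-alg} is filtered for it: in the undeformed Khovanov theory the functor is homogeneous, and the extra terms printed in \Red{red} in \Definition{BN-frob-alg} strictly raise the quantum grading --- for instance $m(x_- \otimes x_-) = x_-$ raises it by $2$ above the homogeneous part. Therefore each elementary cobordism map increases the quantum grading by \emph{at least} its own Euler characteristic ($+1$ for a cup or cap, $-1$ for a saddle, $0$ for a Reidemeister frame), and $F_S$ increases it by at least $\sum_k \chi = \chi(S)$. Passing to the associated graded complexes $\Filt_q/\Filt_{q+2}$ kills precisely the red terms, so the associated graded map is assembled from Khovanov's own Reidemeister and Morse maps; their composite is, by definition, the cobordism map of \cite{Jac-kh-cobordisms}. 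This gives the last sentence of the statement.

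Finally, well-definedness: two movies representing isotopic-rel-boundary cobordisms are connected by a finite sequence of Carter--Saito movie moves together with trivial interchanges of disjoint elementary modifications, so for each movie move one must produce a chain homotopy --- well-defined up to $\pm 1$ --- between the two composites of elementary maps. At the level of $\mathrm{Cob}^3_{/l}$ this (up to sign) is precisely Bar-Natan's and Jacobsson's functoriality-up-to-sign theorem, \cite{Bar-kh-tangle-cob,Jac-kh-cobordisms} (later reproved by Clark--Morrison--Walker). Applying the filtered TQFT carries these category-level homotopies to homotopies of $\Field$-complexes that are filtered in the same ``raises the quantum grading by at least $\chi$'' sense, and carries the sign ambiguity to $\pm 1$, yielding the remaining assertions. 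The main obstacle is exactly this movie-move invariance up to sign, which is delicate already for undeformed Khovanov homology; the plan sidesteps reproving it by importing it from \cite{Bar-kh-tangle-cob,Jac-kh-cobordisms}, so that the only new work is the filtration bookkeeping above --- checking that the Bar-Natan differential and all elementary maps are filtered with the stated degree shifts, and that the category-level homotopies, being built from cobordisms of bounded Euler characteristic, become filtered homotopies after one applies the TQFT. (If one prefers to avoid $\mathrm{Cob}^3_{/l}$, the same result follows by repeating Bar-Natan's and Jacobsson's movie-move calculations directly with the deformed Frobenius algebra and checking at each step that the homotopies produced respect $\Filt_\bullet$.)
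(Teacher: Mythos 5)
Your overall route is the same as the paper's---factor $\BNcx$ through Bar-Natan's formal complex in $\mathcal{C}ob^3_{/l}$ and import functoriality-up-to-sign from there---but you have skipped the step that constitutes the actual content of the proof. Bar-Natan's invariance and movie-move theorems are proved in the quotient category $\mathcal{C}ob^3_{/l}$, i.e.\ cobordisms modulo the local relations $S$ (spheres evaluate to $0$), $T$ (tori evaluate to $2$) and $4\mathit{Tu}$. A Frobenius-algebra TQFT only induces a functor out of that quotient---and hence only inherits the category-level Reidemeister equivalences and movie-move homotopies you propose to ``apply the TQFT functor'' to---if it annihilates those three relations. This is not automatic for a deformed Frobenius algebra, and verifying it is precisely what the paper's proof consists of: $S$ follows from $\eta\circ\iota=0$; $T$ from the computation $1\mapsto x_+\mapsto x_+\otimes x_-+x_-\otimes x_+-x_+\otimes x_+\mapsto x_-+x_--x_+\mapsto 2$; and $4\mathit{Tu}$ from an explicit expansion of $(\Delta\circ\iota)\otimes\iota\otimes\iota+\iota\otimes\iota\otimes(\Delta\circ\iota)$ showing it is symmetric under exchanging the second and third tensor factors. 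Without these checks your first paragraph does not get off the ground: the composite of elementary maps is defined, but nothing guarantees that two movies presenting the same cobordism yield homotopic composites, because the homotopies you are importing live in a category on which your functor is not yet known to be defined.

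The rest of your argument is fine and matches the paper's intent. The filtration bookkeeping (the \Red{red} terms strictly raise the quantum degree, so each elementary map is filtered of degree equal to its Euler characteristic and the associated graded map recovers the map of~\cite{Jac-kh-cobordisms}) is correct. Your fallback of redoing the movie-move calculations directly with the deformed algebra would also work in principle, but it is far more labor than the three local verifications above, which is why the paper (invoking~\cite[Theorem 5]{Bar-kh-tangle-cob}) reduces the entire statement to them.
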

\begin{proof}[Sketch of proof.]
  It suffices to show that $\BNcx(L)$ is obtained by composing
  Bar-Natan's formal-complex-valued invariant~\cite[Definition
  6.4]{Bar-kh-tangle-cob} and some functor $\mathcal{C}ob^3_{/l}\to
  \mathcal{K}om_{\ZZ}$.  With coefficients in $\F_2$ instead of $\ZZ$
  this is essentially~\cite[Exercise 9.5]{Bar-kh-tangle-cob}, and is
  discussed further in~\cite{Turner-kh-BNseq}. To obtain the result
  over $\ZZ$, by~\cite[Theorem 5]{Bar-kh-tangle-cob} it suffices
  verify that the topological field theory corresponding to the
  Frobenius algebra in \Definition{BN-frob-alg} satisfies the $S$, $T$
  and $4\mathit{Tu}$ relations.

  The argument is essentially the same as~\cite[Proposition 7.2]{Bar-kh-tangle-cob}.
  The $S$ relation---that spheres evaluate to $0$---follows from the
  fact that $\eta\circ\iota=0$. The $T$ relation---that tori
  evaluate to $2$---corresponds to the composition
  \[
  1\stackrel{\iota}{\longrightarrow}x_+\stackrel{\Delta}{\longrightarrow}x_+\otimes
  x_-+x_-\otimes x_+-x_+\otimes x_+
  \stackrel{m}{\longrightarrow} x_- +x_--x_+\stackrel{\eta}{\longrightarrow}2.
  \]
  For the $4\mathit{Tu}$ relation, with notation as
  in~\cite[Proposition 7.2]{Bar-kh-tangle-cob}, it suffices to show
  that $L=R$. Computing,
  \begin{align*}
    L&=(\Delta\circ \iota)\otimes \iota\otimes\iota +
    \iota\otimes \iota\otimes (\Delta\circ\iota)\\
    &=\bigl[(x_+\otimes x_-+x_-\otimes x_+-x_+\otimes x_+)\otimes x_+\otimes
    x_+\bigr]\\
    &\qquad\qquad
    +\bigl[x_+\otimes x_+\otimes (x_+\otimes x_-+x_-\otimes
    x_+-x_+\otimes x_+)]\\
    &=x_+\otimes x_-\otimes x_+\otimes x_+ + x_-\otimes x_+\otimes
    x_+\otimes x_+ -x_+\otimes x_+\otimes x_+\otimes x_+\\
    &\qquad\qquad
    x_+\otimes x_+\otimes x_+\otimes x_- + x_+\otimes x_+\otimes
    x_-\otimes x_+ - x_+\otimes x_+\otimes x_+\otimes x_+\\
    &=x_{-+++}+x_{+-++}+x_{++-+}+x_{+++-}-2x_{++++}.
  \end{align*}
  Given a vector space $V$, let $s_{i,j}\co V^{\otimes n}\to
  V^{\otimes n}$ be the map which exchanges the $i\th$ and $j\th$
  factors. Then $R=s_{23}\circ L$; but $L$ is invariant
  under $s_{23}$.
\end{proof}

The following is well-known to experts:
\begin{prop}\label{prop:S-quasi-iso}
  Let $S$ be a connected cobordism from a knot $K_1$ to a knot
  $K_2$. Then the induced map $F_S\co \BNcx(K_1)\to \BNcx(K_2)$ is a
  quasi-isomorphism.
\end{prop}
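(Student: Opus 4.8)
The plan is to identify this as the Bar--Natan counterpart of the standard fact that a connected cobordism induces an isomorphism on Lee homology, and to prove it by pushing the Lee-type canonical cycles through a movie presentation of $S$. First I would record the relevant numerology: by \Citethm{Turner}, $H_*(\BNcx(K_1;\Field))$ and $H_*(\BNcx(K_2;\Field))$ are each isomorphic to $\Field^2$ and concentrated in homological grading $0$, while by \Citethm{BN-filt-cob} the chain map $F_S$ preserves homological grading. Thus $(F_S)_*$ is an $\Field$-linear map $\Field^2\to\Field^2$, and $F_S$ is a quasi-isomorphism if and only if $(F_S)_*$ is injective, equivalently surjective, equivalently an isomorphism. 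So it suffices to exhibit a basis of $H_*(\BNcx(K_1;\Field))$ that $(F_S)_*$ carries to a basis of $H_*(\BNcx(K_2;\Field))$.

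Next I would set up the canonical cycles. After the change of basis replacing $\{x_+,x_-\}$ by $\{\xbar,x_-\}$ of \Equation{change-of-basis}, the Bar--Natan Frobenius algebra becomes a product of two one-dimensional pieces, which is exactly the feature exploited in \cite{Lee-kh-endomorphism}. As there, each orientation $o$ of a link $L$ gives a distinguished cycle $\mathfrak{s}_o\in\BNcx(L;\Field)$ supported on the oriented resolution, and --- as is already used in the proof of \Citethm{Turner} --- the classes $[\mathfrak{s}_o]$ are the canonical, orientation-indexed basis of $H_*(\BNcx(L;\Field))$. For a knot $K_i$ this is a two-element basis $\{[\mathfrak{s}_{o_i}],[\mathfrak{s}_{\bar o_i}]\}$, where $\bar o_i$ is the reverse of $o_i$.

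I would then present $S$ as a movie of planar isotopies, Reidemeister moves, births, deaths and saddles, so that by \Citethm{BN-filt-cob} (and the homotopy-invariance of $\BNcx$) the map $F_S$ is, up to sign and filtered homotopy, the composite of the associated elementary chain maps. The main computation --- standard for Lee homology, cf.\ \cite{Ras-kh-slice} --- records the effect of each elementary cobordism on the cycles $\mathfrak{s}_o$: isotopies and Reidemeister moves send $\mathfrak{s}_o$ to $\pm\mathfrak{s}_{o'}$ modulo strictly higher quantum filtration; an elementary saddle does the same when $o$ is compatible with the saddle, and produces a term of strictly higher filtration otherwise; and a birth (resp.\ death) sends $\mathfrak{s}_o$ to the signed sum of the $\mathfrak{s}_{o'}$ over the two extensions (resp.\ restrictions) of $o$, again modulo higher filtration. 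Orienting $S$ so that it restricts to $o_1$ on $K_1$ (and hence to some $o_2$ on $K_2$) and composing along the movie gives $F_S(\mathfrak{s}_{o_1})=\pm\mathfrak{s}_{o_2}+(\text{strictly higher quantum filtration})$, and likewise $F_S(\mathfrak{s}_{\bar o_1})=\pm\mathfrak{s}_{\bar o_2}+(\text{strictly higher quantum filtration})$ for the opposite orientation of $S$.

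The step I expect to be the main obstacle is the filtration bookkeeping needed to pass from these leading-order identities to the conclusion that $(F_S)_*$ is an isomorphism: one must track the quantum-filtration levels of the canonical generators through the movie and check their compatibility with the fact that $F_S$ shifts quantum grading by at least $\chi(S)$, so that the higher-filtration corrections cannot, on homology, cancel the leading terms. This is precisely the work done in Rasmussen's treatment of Lee homology (\cite{Ras-kh-slice}, see also \cite{Lee-kh-endomorphism}), and it carries over to the Bar--Natan complex essentially verbatim once one has \Equation{change-of-basis} and \Citethm{Turner}; combined with \Citethm{Turner}'s statement that $\{[\mathfrak{s}_{o_2}],[\mathfrak{s}_{\bar o_2}]\}$ is a basis, it yields the proposition. (The case relevant to the sequel is orientable $S$, where $\chi(S)=-2g$; a non-orientable connected $S$ can be handled by the same movie analysis, now tracking the two canonical generators jointly rather than via a global orientation of $S$.)
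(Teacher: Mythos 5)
Your overall strategy is exactly the one the paper takes: its entire proof consists of citing Rasmussen's Lee-homology argument and observing that Turner's change of basis \EquationCont{change-of-basis} lets that argument run verbatim for the Bar--Natan complex. Your setup --- the orientation-indexed canonical generators, the movie decomposition, and the reduction to showing that a basis of $H_0(\BNcx(K_1;\Field))$ maps to a basis of $H_0(\BNcx(K_2;\Field))$ --- is the right skeleton, and the non-orientable case you mention at the end is not needed (the statement is only applied to orientable $S$, and in any case connectedness plus the two canonical generators handles it as you say).

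The one step that would fail as written is the ``modulo strictly higher quantum filtration'' framing. Knowing $F_S(\mathfrak{s}_{o_1})=\pm\mathfrak{s}_{o_2}+z$ with $z$ a cycle of strictly higher filtration than the chain $\mathfrak{s}_{o_2}$ does \emph{not} determine the class $[F_S(\mathfrak{s}_{o_1})]$ in $H_0(\BNcx(K_2;\Field))\cong\Field^2$: the class $[z]$ could a priori be $\mp[\mathfrak{s}_{o_2}]+[\mathfrak{s}_{\bar o_2}]$, since such combinations are represented by cycles in filtration level $s_{\max}^\Field=s_{\min}^\Field+2$, strictly above the filtration of the chain $\mathfrak{s}_{o_2}$; in that scenario the images of the two canonical classes could coincide and $(F_S)_*$ would fail to be injective. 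So the ``filtration bookkeeping'' you defer is not where the content lies, and it is not what Rasmussen does. His argument (and the one the paper intends) proves an \emph{exact} identity on homology classes, which is where the diagonalization \EquationCont{change-of-basis} earns its keep: an orientation-incompatible merge kills the canonical generator outright (because $\xbar\otimes x_-\mapsto 0$), while compatible merges, splits, births, deaths, and Reidemeister maps send canonical classes to exact nonzero multiples of canonical classes for the induced orientations. Connectedness of $S$ then gives exactly one orientation of $S$ extending each orientation of $K_1$, whence $(F_S)_*[\mathfrak{s}_{o_1}]=c\,[\mathfrak{s}_{o_2}]$ and $(F_S)_*[\mathfrak{s}_{\bar o_1}]=c'\,[\mathfrak{s}_{\bar o_2}]$ with $c,c'\neq 0$, and \Citethm{Turner} finishes the proof. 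Filtration levels enter only later, for the genus bounds.
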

\begin{proof}
  The analogous statement using the Lee deformation was proved by
  Rasmussen~\cite{Ras-kh-slice}.  Using Turner's change of basis
  (\Equation{change-of-basis}), Rasmussen's argument applies without
  essential changes.
\end{proof}

\subsection{The \texorpdfstring{$s$}{s} invariants}\label{subsec:BN-s-invt}
\begin{citethm}\label{citethm:MTV}
  \cite{MTV-Kh-s-invts} Let $\Field$ be a field of characteristic
  different from $2$, so Rasmussen's $s$ over $\Field$ is
  well-defined. Then the numbers
  \begin{align*}
    s_{\min}^\Field(K)&=\max\set{q\in2\Z+1}{i_*\co H_*(\Filt_q\BNcx)\to H_*(\BNcx)\cong \Field^2\text{
        surjective}} \\
    s_{\max}^\Field(K)&= \max\set{q\in 2\Z+1}{i_*\co H_*(\Filt_q\BNcx)\to H_*(\BNcx)\cong \Field^2\text{ nonzero}}
  \end{align*}
  defined using Bar-Natan's complex agree with Rasmussen's $s$
  invariants defined in~\cite[Definition 3.1]{Ras-kh-slice} (but using
  the field $\Field$ instead of $\QQ$).
\end{citethm}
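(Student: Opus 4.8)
The plan is to show that, over any field $\Field$ with $\characteristic(\Field)\neq 2$, the Bar-Natan complex $\BNcx(K;\Field)$ and the Lee complex $\BNcx^{\mathrm{Lee}}(K;\Field)$ are isomorphic \emph{as filtered complexes} via an isomorphism inducing the identity on associated graded (the Khovanov complex), and to note that this forces the two pairs of invariants to coincide. For the reduction step, write $i_q\co H_*(\Filt_q\BNcx)\to H_*(\BNcx)$ for the natural map; by \Citethm{Turner}, $H_*(\BNcx(K;\Field))\cong\Field^2$, and $\image(i_q)$ grows weakly as $q$ decreases (since $\Filt_q\sbs\Filt_{q-2}$), so it is $0$ for $q\gg 0$ and all of $\Field^2$ for $q\ll 0$. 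Thus it passes through (at most) two ``jumps'': the largest $q$ below which $\image(i_q)\neq 0$ is $s^\Field_{\max}$, and the largest $q$ below which $\image(i_q)=\Field^2$ is $s^\Field_{\min}$. The identical bookkeeping applies to the Lee complex, and Rasmussen's $s_{\min},s_{\max}$ of~\cite[Definition 3.1]{Ras-kh-slice} (with $\Field$ in place of $\QQ$) are, by definition, precisely these two jumps of the image-filtration on Lee homology. Hence a filtered isomorphism $\BNcx(K;\Field)\xrightarrow{\ \cong\ }\BNcx^{\mathrm{Lee}}(K;\Field)$ inducing the identity on associated graded carries $\Filt_q$ onto $\Filt_q$, identifies the two image-filtrations on homology, and so gives $s^\Field_{\min}=s_{\min}$ and $s^\Field_{\max}=s_{\max}$.

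Next I would construct this isomorphism at the level of Frobenius algebras, where it is just ``completing the square''. In the Bar-Natan Frobenius algebra of \Definition{BN-frob-alg} the deformed relation is $x_-\cdot x_-=x_-$; over a field containing $1/2$, the substitution $x_-\mapsto x_--\tfrac12 x_+$ (fixing $x_+$) turns this into $x_-^2=\tfrac14 x_+$, i.e.\ into a Lee-type deformation $x_-^2=c\, x_+$ with $c=\tfrac14\neq 0$. Because $\tfrac12 x_+$ has strictly larger quantum degree than $x_-$, this substitution is a filtered change of basis whose associated graded is the identity of the Khovanov Frobenius algebra. One checks that it also carries the comultiplication, unit and counit of the Bar-Natan algebra to the corresponding structure maps of the $c$-deformation (after the harmless, degree-zero step of rescaling the new generator and compensating $\Delta$ and $\eta$), so it is an isomorphism of filtered Frobenius algebras. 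Since the Khovanov-type complex is obtained by applying the Frobenius algebra, as a $(1+1)$-dimensional TQFT, to the cube of resolutions of a diagram of $K$, this isomorphism of Frobenius algebras induces a filtered chain isomorphism $\BNcx(K;\Field)\cong\BNcx^{\mathrm{Lee}}(K;\Field)$ whose associated graded is the identity of the Khovanov complex. (If one prefers the normalization $c=1$, note further that all nonzero Lee-type deformations over a fixed $\Field$ with $1/2$ give the same $s_{\min},s_{\max}$, by the analogous scalar rescaling.) Combined with the reduction of the first paragraph, this completes the argument.

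The main obstacle---and the only place the hypothesis $\characteristic(\Field)\neq 2$ is used---is the bookkeeping in the middle step: one must verify that completing the square respects all four structure maps $m,\Delta,\iota,\eta$ simultaneously, that the attendant scalar rescalings of $\Delta$ and $\eta$ do not disturb the quantum filtration, and that the induced isomorphism of associated graded Frobenius algebras is genuinely the identity (not merely some filtered automorphism), so that no overall shift is introduced into $s^\Field_{\min},s^\Field_{\max}$. In characteristic $2$ there is no $1/2$, completing the square is impossible, and indeed the two deformations are genuinely different---Lee's $\F_2[X]/(X^2-1)$ is non-semisimple while Bar-Natan's $\F_2[X]/(X^2-X)$ is semisimple---which is precisely why the statement is restricted to $\characteristic(\Field)\neq 2$.
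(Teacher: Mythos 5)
Your proposal is correct, but the comparison here is slightly unusual: the paper's own ``proof'' of \Citethm{MTV} is a one-line citation of \cite[Proposition 3.1]{MTV-Kh-s-invts}, so what you have written is in effect a reconstruction of the proof of the cited proposition rather than an alternative to anything in this paper. Your reduction in the first paragraph is exactly right: $s_{\min}^\Field$ and $s_{\max}^\Field$ are read off from the jumps of the nested family of subspaces $\image\bigl(H_*(\Filt_q)\to H_*(\BNcx)\bigr)\subseteq\Field^2$, so any chain isomorphism carrying $\Filt_q$ onto $\Filt_q$ identifies the two pairs of invariants. Completing the square is indeed the standard mechanism: with $y=x_--\tfrac12x_+$ one gets $y^2=\tfrac14x_+$, and $\phi(x_+)=x_+$, $\phi(x_-)=2x_--x_+$ is an algebra isomorphism from the Lee algebra onto the Bar-Natan algebra. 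Two remarks on the details you single out as the main obstacle. First, you do not actually need the associated graded of the isomorphism to be the identity --- a filtered isomorphism by definition introduces no shift, and in fact with the normalization $\phi(x_-)=2x_--x_+$ the associated graded is multiplication by $2$ on $x_-$ rather than the identity; this is harmless. Second, the normalization discrepancy is concentrated entirely in the comultiplication: one computes directly that $\phi$ intertwines the multiplications and units on the nose, while $(\phi\otimes\phi)\circ\Delta_{\mathrm{Lee}}=2\,\Delta_{\mathrm{BN}}\circ\phi$ (the counit never enters the cube differential, so its discrepancy is irrelevant). This single factor of $2$ on split edges is absorbed by a diagonal, bigrading-preserving rescaling of the distinguished generators by powers of $2$ governed by (number of circles in the resolution) plus (homological degree), a quantity that has constant parity over the cube and increases by exactly $2$ along split edges and by $0$ along merge edges. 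With those two points made precise, your argument is a complete and correct proof of the statement, and your closing observation correctly identifies why it must fail in characteristic $2$.
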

\begin{proof}
  This is immediate from~\cite[Proposition 3.1]{MTV-Kh-s-invts}.
\end{proof}

\begin{prop}\label{prop:differ-by-two} Let $K$ be a knot and $\Field$ a field. Then
  $s_{\max}^\Field(K)=s_{\min}^\Field(K)+2$.
\end{prop}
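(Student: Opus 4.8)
First note $s_{\max}^\Field(K)\ge s_{\min}^\Field(K)$: ``$i_*$ surjective'' implies ``$i_*$ nonzero'', and both sets of odd integers are downward closed because $\Filt_{q'}\BNcx\supseteq\Filt_q\BNcx$ whenever $q'\le q$. Set $F_q=\image\bigl(i_*\co H_0(\Filt_q\BNcx)\to H_0(\BNcx)\cong\Field^2\bigr)$; by finiteness of the filtration $F_q=\Field^2$ for $q\ll 0$ and $F_q=0$ for $q\gg 0$, so $s_{\min}^\Field(K)=\max\{q\in 2\Z+1\mid F_q=\Field^2\}$ and $s_{\max}^\Field(K)=\max\{q\in 2\Z+1\mid F_q\neq 0\}$. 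For $0\neq z\in H_0(\BNcx)$ put $\mathrm{fil}(z)=\max\{q\in 2\Z+1\mid z\in F_q\}$. It then suffices to produce an odd integer $g$ and classes $u,v$ spanning $H_0(\BNcx)$ with $\mathrm{fil}(u)=g$, $\mathrm{fil}(v)=g+2$, and $\mathrm{fil}(w)\le g$ for every $w\notin\Field v$; granting this, $F_q=\Field^2$ for $q\le g$, $F_{g+2}=\Field v$, and $F_{g+4}=0$, whence $s_{\min}^\Field(K)=g$ and $s_{\max}^\Field(K)=g+2$.

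The plan is to take $u,v$ from the Lee--Rasmussen canonical generators. The Bar-Natan Frobenius algebra is diagonalized, already over $\Z$, by Turner's change of basis $\{\xbar,x_-\}$ of \Equation{change-of-basis}, whose idempotents are $\xbar=x_+-x_-$ and $x_-$; so for each orientation $o$ of $K$ one has an explicit cycle $\mathfrak{s}_o$ in homological grading $0$ of $\BNcx(K;\Field)$, namely the tensor of the idempotents $\xbar,x_-$ over the circles of the oriented resolution (the reversed orientation $\ol o$ giving the labelling with $\xbar$ and $x_-$ interchanged); by \Citethm{Turner}, $\{[\mathfrak{s}_o],[\mathfrak{s}_{\ol o}]\}$ is a basis of $H_0(\BNcx)$. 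I would then carry out, in the present notation, the computation of \cite[\S3]{Ras-kh-slice}: expanding the cycles in the $\{x_+,x_-\}$ basis, one shows $\mathrm{fil}([\mathfrak{s}_o])=\mathrm{fil}([\mathfrak{s}_{\ol o}])=:g$ for a suitable odd $g$ (using the $\xbar\leftrightarrow x_-$ symmetry together with the explicit leading terms); that the leading terms of $\mathfrak{s}_o$ and $\mathfrak{s}_{\ol o}$ cancel in $\mathfrak{s}_o+\mathfrak{s}_{\ol o}$, so $\mathrm{fil}([\mathfrak{s}_o+\mathfrak{s}_{\ol o}])=g+2$; and that any class not proportional to $[\mathfrak{s}_o+\mathfrak{s}_{\ol o}]$ retains a nonzero leading part and hence has filtration level exactly $g$. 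Then $u=[\mathfrak{s}_o]$, $v=[\mathfrak{s}_o+\mathfrak{s}_{\ol o}]$ do the job. When $\characteristic(\Field)\neq 2$ this is Rasmussen's original argument (over $\Field$ rather than $\QQ$) and goes through verbatim; see also \Citethm{MTV}.

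The point requiring real work is the characteristic-$2$ case of the previous paragraph. Over $\QQ$ one uses the basis $\{[\mathfrak{s}_o+\mathfrak{s}_{\ol o}],[\mathfrak{s}_o-\mathfrak{s}_{\ol o}]\}$ and shows these classes are ``homogeneous'' of pure quantum gradings $g+2$ and $g$; over $\F_2$ they coincide, so one must instead show directly that $[\mathfrak{s}_o+\mathfrak{s}_{\ol o}]$ is, up to scalar, the only class of filtration level exceeding $g$ and that this level is exactly $g+2$ --- i.e.\ that once the leading terms cancel, the surviving part of $\mathfrak{s}_o+\mathfrak{s}_{\ol o}$ genuinely sits at quantum grading $g+2$ and is not homologous to a representative of strictly higher filtration. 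This is where Rasmussen's explicit description of the canonical cycles (together with one-dimensionality of the reduced Bar-Natan homology of a knot) enters; since the idempotents $\xbar,x_-$, and hence the cycles $\mathfrak{s}_o$, already make sense over $\Z$, the cleanest implementation is likely to run the whole computation over $\Z$ and then specialize to $\Field$. An alternative I would investigate over $\F_2$ is a Shumakovitch-type splitting of the filtered complex $\BNcx(K;\F_2)$ as the reduced Bar-Natan complex tensored with a two-dimensional factor, which would force the filtration on $H_0(\BNcx(K;\F_2))\cong\F_2\oplus\F_2$, and hence the conclusion, from the single filtration level of the one-dimensional reduced homology.
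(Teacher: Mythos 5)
Your reduction to exhibiting a basis $u,v$ of $H_0(\BNcx)$ with $\mathrm{fil}(u)=g$, $\mathrm{fil}(v)=g+2$ and $\mathrm{fil}(w)\le g$ for $w\notin\Field v$ is fine, and the characteristic-$\ne 2$ case is indeed just Rasmussen's argument transported through Turner's change of basis. But in characteristic $2$ you have correctly located the hard point --- the \emph{upper} bounds, i.e.\ that $\mathrm{fil}([\mathfrak{s}_o])$ is exactly $g$ and $\mathrm{fil}([\mathfrak{s}_o+\mathfrak{s}_{\ol{o}}])$ is exactly $g+2$ --- and then not proved it. Neither suggested route closes the gap as stated. ``Compute over $\Z$ and specialize'' fails because an upper bound on a filtration level over $\Z$ does not descend to $\F_2$: after reduction a class may acquire a representative of strictly higher filtration (the new representative need only be a cycle mod $2$). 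This is not a hypothetical worry; the failure of $\F_2$ filtration levels to be controlled by the integral complex is exactly the phenomenon behind the examples with $s^{\F_2}\ne s^{\QQ}$ in \Remark{cotton-good}. The Shumakovitch-type splitting is only something you ``would investigate''; a \emph{filtered} version of it for the Bar-Natan complex is not established and is essentially as hard as the statement itself. Note also that your lower bound via cancellation of leading terms only gives $s_{\max}^\Field\ge g_0+2$, where $g_0$ is the quantum grading of the leading term of $\mathfrak{s}_o$; this beats $s_{\min}^\Field+2$ only once you already know $s_{\min}^\Field=g_0$, which is again one of the unproved upper bounds.

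The paper closes the gap with two devices you do not have. For $s_{\max}^\Field\ge s_{\min}^\Field+2$ it uses the algebra involution $I$ with $I(x_+)=x_+$, $I(x_-)=x_-+x_+$ (swapping $\xbar$ and $x_-$): $I_*$ is the nontrivial involution of $H_0(\BNcx)\cong\F_2^2$, yet $I$ preserves the filtration and induces the identity on the associated graded, so for \emph{any} cycle $a\in\Filt_{s_{\min}^\Field}$ generating homology together with $I_*(a)$, the nonzero class $[a+I(a)]$ lies in the image from $\Filt_{s_{\min}^\Field+2}$; this sidesteps the canonical cycles entirely. For $s_{\max}^\Field\le s_{\min}^\Field+2$ it uses the filtered module structure of $\BNcx(K)$ over $\BNcx(U)$ coming from a basepoint: with $a$ homologous to a canonical generator labeled $x_-$ at the basepoint, one has $x_-\cdot a=a$ and $x_-\cdot I_*(a)=0$, whence $s_{\min}^\Field=s(a)=s(x_-(a+I_*(a)))\ge s(x_-)+s(a+I_*(a))-1\ge s_{\max}^\Field-2$. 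If you wish to keep your framework, you must supply an argument of this second kind (or a genuinely filtered reduced-complex splitting) for the upper bound; as written, the proposal does not contain one.
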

\begin{proof}
  In the case that the $\characteristic(\Field)\neq 2$ this follows
  from \Citethm{MTV} and~\cite[Proposition
  3.3]{Ras-kh-slice} (but with $\Field$ used in place of $\QQ$). For
  the case that $\characteristic(\Field)=2$ we need a further
  argument.

  First, we
  show that $s_{\max}^\Field(K)\neq
  s_{\min}^\Field(K)$. There is an involution $I$ of the modulo $2$
  Bar-Natan Frobenius algebra defined by
  \begin{align*}
    I(\xbar)&=x_- & I(x_-)&=\xbar\\
    \shortintertext{or equivalently}
    I(x_+)&=x_+ & I(x_-)&=x_-+x_+.
  \end{align*}
  The map $I$ induces an involution $I_*\co \BNcx(K)\to
  \BNcx(K)$. Since $I$ exchanges $x_-$ and $\xbar$, $I_*$ is the
  nontrivial involution of $H_*(\BNcx(K))=\Field\oplus\Field$.

  Let $a\in \Filt_{s_{\min}^\Field(K)}\BNcx$ be a cycle so that
  $H_*(\BNcx(K))=\Field\langle a, I_*(a)\rangle$. In particular, $a$
  is chosen so that $a+I_*(a)$ represents a nontrivial homology class.
  The map $I$ respects the $q$-filtration; moreover, $I$ induces the
  identity map on the associated graded complex. It follows that the
  lowest-order terms of $a$ and $I(a)$ are the same. Thus, $a+I(a)\in
  \Filt_{q+2}\BNcx(K)$, so $s_{\max}^\Field(K)\geq s_{\min}^\Field(K)+2$.

  Next, we argue as in the proof of~\cite[Proposition
  3.3]{Ras-kh-slice} that $s_{\max}^\Field(K)\leq
  s_{\min}^{\Field}(K)+2$.  Let $U$ denote the unknot. Trivially,
  $H_*(\BNcx(U))=\BNcx(U)=\Field\langle \xbar,x_-\rangle$, with both
  $\xbar$ and $x_-$ lying in filtration level $q=-1$. Now, $\BNcx(U)$
  is a filtered ring, and choosing a basepoint $p$ on $K$ converts
  $\BNcx(K)$ and $H_*(\BNcx(K))$ into filtered modules over
  $\BNcx(U)$, where multiplication is filtered of degree $-1$. The
  element $a$, above, is homologous to one of the canonical generators
  $a'$; without loss of generality, suppose that in the generator
  $a'$, the component containing $p$ is labeled by $x_-$. Then
  $x_-I(a)=0$ and $x_-a=a$. Hence \[
  s_{\min}^\Field(K)=s(a)=s(x_-(a+I_*(a)))\geq
  s(x_-)+s(a+I_*(a))-1=s_{\max}^\Field(K)-2, \] as
  desired.
\end{proof}

In view of \Proposition{differ-by-two} write
$s^\Field(K)=s^\Field_{\min}(K)+1=s^\Field_{\max}(K)-1$.

\begin{corollary}\label{cor:s-bounds-genus}
  Let $K_1$ and $K_2$ be knots and let $S$ be a connected cobordism in
  $[0,1]\times S^3$ from $K_1$ to $K_2$. Then the invariant $s^\Field$
  defined using the Bar-Natan complex satisfies
  \[
  |s^\Field(K_1)-s^\Field(K_2)|\leq -\chi(S)
  \]
  In particular, for a knot $K$,
  \[
  |s^\Field(K)|\leq 2g_4(K).
  \]
\end{corollary}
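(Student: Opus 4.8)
The plan is to transcribe Rasmussen's argument to Bar-Natan's complex, feeding in the functoriality and quasi-isomorphism statements already assembled above. Reversing $S$ (precompose with $t\mapsto 1-t$ on $[0,1]$) produces a connected cobordism from $K_2$ to $K_1$ with the same Euler characteristic, so it suffices to prove the single inequality $s^\Field(K_1)\leq s^\Field(K_2)-\chi(S)$; since $s^\Field=s^\Field_{\min}+1$, this is equivalent to $s^\Field_{\min}(K_1)\leq s^\Field_{\min}(K_2)-\chi(S)$. Here $\chi(S)$ is even — a connected orientable cobordism between two knots has $\chi(S)=-2g(S)\leq 0$ — so if $q$ is odd then so is $q+\chi(S)$, and the quantities below are well-posed.

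Put $q=s^\Field_{\min}(K_1)$, so that $i_*\co H_*(\Filt_q\BNcx(K_1))\to H_*(\BNcx(K_1))$ is surjective. By \Citethm{BN-filt-cob}, the cobordism map $F_S\co\BNcx(K_1)\to\BNcx(K_2)$ is a filtered chain map that raises the quantum grading by at least $\chi(S)$; it is well-defined up to sign and filtered homotopy, so the maps it induces on homology of the filtered pieces are unambiguous, and it carries $\Filt_q\BNcx(K_1)$ into $\Filt_{q+\chi(S)}\BNcx(K_2)$. Compatibility of $F_S$ with the inclusions $\Filt_\bullet\BNcx\into\BNcx$ yields a commuting square whose left and right vertical arrows are $i_*$ for $K_1$ at level $q$ and $i_*$ for $K_2$ at level $q+\chi(S)$, whose top arrow is induced by the restriction of $F_S$, and whose bottom arrow is $(F_S)_*\co H_*(\BNcx(K_1))\to H_*(\BNcx(K_2))$. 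By \Proposition{S-quasi-iso} the map $F_S$ is a quasi-isomorphism, so $(F_S)_*$ is an isomorphism (both groups being $\Field^2$ by \Citethm{Turner}); composing it with the surjection $i_*$ for $K_1$ shows the composite $H_*(\Filt_q\BNcx(K_1))\to H_*(\BNcx(K_2))$ is onto, and commutativity of the square forces $i_*\co H_*(\Filt_{q+\chi(S)}\BNcx(K_2))\to H_*(\BNcx(K_2))$ to be onto. By the definition of $s^\Field_{\min}$ this gives $s^\Field_{\min}(K_2)\geq q+\chi(S)=s^\Field_{\min}(K_1)+\chi(S)$, as desired.

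For the slice-genus bound, specialize to $K_1=K$ and $K_2=U$, the unknot: a connected genus-$g_4(K)$ surface in $B^4$ with boundary $K$, after deleting a small ball meeting it in a trivially embedded disk, becomes a connected cobordism in $[0,1]\times S^3$ from $K$ to $U$ with $\chi=-2g_4(K)$. As recorded in the proof of \Proposition{differ-by-two}, $\BNcx(U)=H_*(\BNcx(U))=\Field\langle\xbar,x_-\rangle$ sits entirely in filtration level $-1$, whence $s^\Field_{\min}(U)=-1$ and $s^\Field(U)=0$. Therefore $|s^\Field(K)|=|s^\Field(K)-s^\Field(U)|\leq-\chi(S)=2g_4(K)$.

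No real obstacle remains: all the substantive input — projective functoriality of the filtered Bar-Natan complex, the quasi-isomorphism property of connected cobordism maps, the relation $s^\Field_{\max}=s^\Field_{\min}+2$, and $H_*(\BNcx(K))\cong\Field^{2}$ — has been established above, so the only care needed is bookkeeping: checking that $F_S$ really lands in $\Filt_{q+\chi(S)}$, that $q+\chi(S)$ is again odd, and that the square commutes at the chain level.
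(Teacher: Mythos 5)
Your argument is correct and is essentially the paper's own proof: set $q=s^\Field_{\min}(K_1)$, use the filtered functoriality of \Citethm{BN-filt-cob} to get the commuting square, use \Proposition{S-quasi-iso} to see that surjectivity of $i_*$ is pushed forward, deduce $s^\Field_{\min}(K_2)\geq s^\Field_{\min}(K_1)+\chi(S)$, symmetrize by reversing $S$, and specialize to the unknot for the slice-genus bound. The only cosmetic difference is that the paper phrases the surjectivity step by chasing a chosen basis $a,b$ of $H_0(\Filt_q\BNcx(K_1))$ rather than citing surjectivity of the composite, which is the same argument.
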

\begin{proof}
  In the case that $\characteristic(\Field)\neq 2$, this follows from
  \Citethm{MTV} (i.e.,~\cite[Proposition 3.1]{MTV-Kh-s-invts})
  and Rasmussen's result~\cite[Theorem 1]{Ras-kh-slice} (except with
  $\Field$ in place of $\QQ$). But, using this as an opportunity to
  review Rasmussen's argument, which will be adapted to a slightly
  more complicated setting below, we give a direct proof.

  Let $q=s_{\min}^\Field(K_1)$. By \Citethm{BN-filt-cob},
  $F_S\from\BNcx(K_1)\to\BNcx(K_2)$ is a filtered map of filtration
  $\chi(S)$, so we have the following commutative diagrams:
  \[
  \vcenter{\hbox{
      \xymatrix{
        \Filt_q\BNcx(K_1)\ar[r]^-{F_S}\ar@{^(->}[d]_i & \Filt_{q+\chi(S)}\BNcx(K_2)\ar@{^(->}[d]^i\\ 
        \BNcx(K_1)\ar[r]_-{F_S} & \BNcx(K_2)
      }}}\text{ and }
  \vcenter{\hbox{
      \xymatrix{
        H_0(\Filt_q\BNcx(K_1))\ar[r]^-{F_S}\ar[d]_{i_*} & H_0(\Filt_{q+\chi(S)}\BNcx(K_2))\ar[d]^{i_*}\\ 
        H_0(\BNcx(K_1))\ar[r]^{\cong}_-{F_S} & H_0(\BNcx(K_2)).
      }}}
  \]
  Choose $a,b\in H_0(\Filt_q\BNcx(K_1))$ so that $i_*(a),i_*(b)\in
  H_0(\BNcx(K_1))$ form a basis. By \Proposition{S-quasi-iso},
  $F_S(i_*(a))=i_*(F_S(a))$ and
  $F_S(i_*(b))=i_*(F_S(b))$ in $H_0(\BNcx(K_2))$ also form a
  basis. So, $s^{\Field}_{\min}(K_2)\geq q+\chi(S)=s^{\Field}_{\min}(K_1)+\chi(S)$, or
  equivalently $-\chi(S)\geq s^{\Field}_{\min}(K_1)-s^{\Field}_{\min}(K_2)$. Viewing $S$
  as a cobordism from $K_2$ to $K_1$ instead and applying the same
  argument gives $-\chi(S)\geq s^{\Field}_{\min}(K_2)-s^{\Field}_{\min}(K_1)$. Thus,
  \[
  -\chi(S)\geq |s^{\Field}_{\min}(K_2)-s^{\Field}_{\min}(K_1)|.
  \]
  Now, applying \Proposition{differ-by-two} gives the first half of
  the result. The second half follows from the trivial computation
  that $s^\Field(U)=0$.
\end{proof}

\begin{corollary}\label{cor:concord-homo}
  The invariant $s^\Field(K)/2$ defines a homomorphism from the smooth
  concordance group to $\ZZ$.
\end{corollary}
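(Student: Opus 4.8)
The plan is to deduce Corollary~\ref{cor:concord-homo} directly from Corollary~\ref{cor:s-bounds-genus} together with the behaviour of $s^\Field$ under connected sum, which is the standard argument of Rasmussen. First I would recall that the smooth concordance group $\mathcal{C}$ is generated by knots, with group operation connected sum and inverse given by reverse mirror; so it suffices to show that $K\mapsto s^\Field(K)/2$ is well-defined on concordance classes, takes values in $\ZZ$, and is additive under connected sum.

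That the value lies in $\ZZ$ is immediate: $s^\Field(K)=s^\Field_{\min}(K)+1$ is even because $s^\Field_{\min}(K)$ is odd by definition (it is a maximum over $q\in 2\Z+1$). For well-definedness on concordance classes, suppose $K_1$ and $K_2$ are smoothly concordant, i.e.\ there is a smoothly embedded annulus in $[0,1]\times S^3$ from $K_1$ to $K_2$; this is a connected cobordism $S$ of genus $0$, so $\chi(S)=0$, and Corollary~\ref{cor:s-bounds-genus} gives $|s^\Field(K_1)-s^\Field(K_2)|\leq 0$, hence $s^\Field(K_1)=s^\Field(K_2)$. The unknot $U$ satisfies $s^\Field(U)=0$, so the identity of $\mathcal{C}$ maps to $0$.

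The remaining point is additivity: $s^\Field(K_1\# K_2)=s^\Field(K_1)+s^\Field(K_2)$, which I expect to be the main obstacle since it is not a formal consequence of the genus bound but requires an inequality in each direction. For one direction, one uses that there is a connected genus-zero cobordism in $[0,1]\times S^3$ from $K_1\sqcup K_2$ (or rather from $K_1\# K_2$ after a band move) realizing the relation between $K_1\# K_2$ and $K_1, K_2$: concretely, attaching a single band to $K_1\# K_2$ produces the split link $K_1\sqcup K_2$, giving a genus-zero connected cobordism whose Euler characteristic is $-1$, and chasing through the filtered cobordism maps of Theorem~\ref{citethm:BN-filt-cob} together with the multiplicativity of the Bar-Natan Frobenius algebra under disjoint union yields $s^\Field(K_1\# K_2)\geq s^\Field(K_1)+s^\Field(K_2)$; alternatively one invokes Rasmussen's argument~\cite[Proposition~3.3]{Ras-kh-slice}, which goes through verbatim using Turner's change of basis (Equation~\eqref{eq:change-of-basis}) exactly as in the proof of Proposition~\ref{prop:differ-by-two}. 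For the reverse inequality one runs the same argument with $K_1$ replaced by $\overline{K_1}$ and uses $s^\Field(\overline K)=-s^\Field(K)$, which itself follows from the duality between $\BNcx(K)$ and $\BNcx(\overline K)$. Combining, $s^\Field(K_1\#K_2)=s^\Field(K_1)+s^\Field(K_2)$, and since reverse mirror gives the inverse in $\mathcal{C}$ we get $s^\Field(\overline K)=-s^\Field(K)$; dividing by $2$ gives the desired homomorphism $\mathcal{C}\to\ZZ$.
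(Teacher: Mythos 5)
Your proposal is correct and follows essentially the same route as the paper: deduce concordance invariance from the genus bound of Corollary~\ref{cor:s-bounds-genus}, then establish additivity under connected sum by importing Rasmussen's argument (the band-move/short-exact-sequence inequality in one direction and the mirror formula $s^\Field(\overline{K})=-s^\Field(K)$ in the other), both of which carry over to the Bar-Natan complex via Turner's change of basis. The paper phrases the connected-sum step through the short exact sequence of \cite[Lemma 3.8]{Ras-kh-slice} and closes the two inequalities using Proposition~\ref{prop:differ-by-two}, which is the same bookkeeping your $s^\Field=s^\Field_{\min}+1=s^\Field_{\max}-1$ implicitly relies on.
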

\begin{proof}
  In the case that $\characteristic(\Field)\neq 2$ this follows
  from \Citethm{MTV} (i.e.,~\cite[Theorem 4.2]{MTV-Kh-s-invts})
  and Rasmussen's result~\cite[Theorem 2]{Ras-kh-slice}. For the general
  case, it follows from \Corollary{s-bounds-genus} that $s^\Field$ descends
  to a function on the smooth concordance group. So, it only remains
  to prove that this function is a homomorphism, i.e., that
  \[
  s^\Field(K_1\# K_2)=s^\Field(K_1)+s^\Field(K_2).
  \]
  This follows by the same argument as~\cite[Proposition
  3.12]{Ras-kh-slice}. The main points are that there is a short exact
  sequence
  \[
  0\to H_*(\BNcx(K_1\# K_2))\to H_*(\BNcx(K_1))\otimes
  H_*(\BNcx(K_2))\to H_*(\BNcx(K_1\#r(K_2)))\to 0
  \]
  where $r$ denotes the reverse and both maps in the sequence are
  filtered of $q$-degree $-1$ (\cite[Lemma 3.8]{Ras-kh-slice}); and
  that for the mirror $\overline{K}$ of a knot $K$,
  \[
  s_{\min}^\Field(\overline{K})=-s_{\max}^\Field(K)
  \]
  (\cite[Proposition 3.10]{Ras-kh-slice}). The proofs of both
  statements carry over to the Bar-Natan complex without change. The
  first statement implies that $s_{\min}^\Field (K_1\# K_2)\leq
  s_{\min}^\Field (K_1)+s_{\min}^\Field (K_2)+1$. The second then
  implies (by considering the mirrors) that
  $s_{\max}^\Field(K_1)+s_{\max}^\Field(K_2) \leq
  s_{\max}^\Field(K_1\#K_2)+1$.  Using \Proposition{differ-by-two}
  then gives $s_{\min}^\Field (K_1)+s_{\min}^\Field (K_2)+4\leq
  s_{\min}^\Field (K_1\#K_2)+3$. Combining this with the first
  inequality gives $s_{\min}^\Field (K_1\# K_2)=s_{\min}^\Field
  (K_1)+s_{\min}^\Field (K_2)+1=s^\Field(K_1)+s^\Field(K_2)-1$ and
  $s_{\max}^\Field(K_1\#
  K_2)=s_{\max}^\Field(K_1)+s_{\max}^\Field(K_2)-1=s^\Field(K_1)+s^\Field(K_2)+1$,
  proving the result.
\end{proof}

\section{Cobordism maps commute with cohomology
  operations}\label{sec:cob-commutes-st}
We start this section with a brief introduction to the Khovanov homotopy
type in \Subsection{review}.
The rest of this section is devoted to showing that the cobordism maps on Khovanov
homology commute with stable cohomology operations (like Steenrod
squares). To prove this, we simply need to associate a map of Khovanov
spectra to each cobordism and verify that the induced map on
cohomology agrees with the homomorphism given
in~\cite{Kho-kh-tangles,Jac-kh-cobordisms}.

Like the cobordism maps on Khovanov homology, the cobordism maps on
the Khovanov spectra are defined by composing maps associated to
elementary cobordisms. We conjecture that up to (stable) homotopy the
map of Khovanov spectra is independent of the decomposition into
elementary cobordisms---i.e., is an isotopy invariant of the
cobordism---but we will not show that here; see also \Remark{sign-ambiguity}.

In the process of proving that the Khovanov homotopy type is a knot
invariant, in~\cite{RS-khovanov} we associated maps to Reidemeister
moves. These maps were homotopy equivalences, and in particular induce
isomorphisms on Khovanov homology; but we did not verify
in~\cite{RS-khovanov} that these isomorphisms agree with the standard
ones. So, we give this verification in \Subsection{Reidemeister}.

Given this, it remains to define maps associated to cups, caps, and
saddles. Doing so is fairly straightforward;
see \Subsection{cup-cap-sad}. With these ingredients in hand,
\Theorem{coho-op-commute} follows readily;
see \Subsection{prove-coho-op-commute}.

\subsection{A brief review of the Khovanov homotopy type}\label{subsec:review}
In this subsection, we summarize the construction of the
Khovanov suspension spectrum from~\cite{RS-khovanov}. 
The construction is somewhat involved, and at places, fairly
technical, so we only present the general outline, and highlight some
of the salient features. The reader may find it helpful to consult the
examples in~\cite[Section~\ref*{KhSp:sec:examples}]{RS-khovanov} in
conjunction with the discussion here.

Fix a link $L$; the construction of the suspension spectrum
$\KhSpace(L)$ depends on several choices that are listed
in \Subsection{Reidemeister}. Of them, we treat
choice~(\ref{item:choice:ladybug}), namely the choice of a ladybug
matching, as a global
choice. Choices~(\ref{item:choice:diagram})--(\ref{item:choice:sign})
essentially correspond to the choice of a link diagram $D$; those are the
usual choices that one makes in defining the Khovanov chain complex;
after making those choices, we can talk about a Khovanov chain complex
$\KhCx(D)$, which is a chain complex equipped with distinguished set
of generators, which we often refer to as the Khovanov generators.

The suspension spectrum $\KhSpace(L)$ is constructed as (the formal
desuspension of) the suspension spectrum of a CW complex
$\Realize{\KhFlowCat(D)}$. The cells in $\Realize{\KhFlowCat(D)}$
(except the basepoint) canonically
correspond to the Khovanov generators; furthermore, the
correspondence induces an isomorphism between the reduced cellular
cochain complex of $\Realize{\KhFlowCat(D)}$ and $\KhCx(D)$ (after an
overall grading shift).

The central idea is to construct an intermediate object
$\KhFlowCat(D)$, which is a framed flow category. We define a partial
order on the Khovanov generators by declaring $\bm{b}\prec\bm{a}$ if
there is a sequence of differentials in $\KhCx(D)$ from $\bm{b}$ to
$\bm{a}$. To such a pair $\bm{b}\prec\bm{a}$, we associate a
framed $(\gr_h(\bm{a})-\gr_h(\bm{b})-1)$-dimensional moduli space
$\Moduli(\bm{a},\bm{b})$ subject to the following conditions:
\begin{enumerate}
\item If $\bm{a}$ appears in the Khovanov differential applied to
  $\bm{b}$ with coefficient $n_{\bm{ab}}$, then $\Moduli(\bm{a},\bm{b})$
  consists of $n_{\bm{ab}}$ points, counted with sign. (Recall that a
  framed $0$-manifold is a disjoint union of signed points.)
\item\label{item:coherence} If $\bm{c}\prec\bm{b}\prec\bm{a}$, then
  $\Moduli(\bm{a},\bm{b})\times\Moduli(\bm{b},\bm{c})$ is identified
  with a certain subset of $\bdy\Moduli(\bm{a},\bm{c})$, and the
  framings are coherent. (See,
  e.g.,~\cite[Definition~\ref*{KhSp:def:flow-cat}]{RS-khovanov} for a
  precise version of this condition.)
\end{enumerate}

To such a framed flow category $\KhFlowCat(D)$, one can associate an
explicit CW complex $\Realize{\KhFlowCat(D)}$. The construction was
introduced by Cohen-Jones-Segal in
\cite[pp.~309--312]{CJS-gauge-floerhomotopy}, and is
described in more detail in
\cite[Subsection~\ref*{KhSp:sec:flow-to-space}]{RS-khovanov}. If one
has a framed $k$-dimensional manifold in $\R^n$, by the
Pontryagin-Thom construction, one gets a (stable) map from $S^n$ to
$S^{n-k}$, and thereby a CW complex with a $0$-cell, an $(n-k)$-cell,
and an $(n+1)$-cell. The Cohen-Jones-Segal construction is a refined
version of the Pontryagin-Thom 
construction, allowing one to describe arbitrary CW complexes at the
cost of working with manifolds with corners (organized into flow
categories). One needs to choose a few parameters in order to pass
from a framed flow category to a CW complex; these parameters are are listed as
choice~(\ref{item:choice:ABepR}) in \Subsection{Reidemeister}.

So all that remains is to construct the Khovanov flow category
$\KhFlowCat(D)$. The Khovanov chain complex $\KhCx(D)$ is modelled
after the cube chain complex $\CubeCx(N)$, which is obtained from the
cube $(\Z\to\Z)^{\otimes N}$ after infusing the arrows with a sign
assignment that makes every face anti-commute. One can
define a framed flow category for the cube by defining the
moduli spaces to be permutahedra. This is
choice~(\ref{item:choice:neat}) from \Subsection{Reidemeister}. We
define the Khovanov flow category by modelling it on the cube flow
category $\CubeFlowCat(N)$, via
choice~(\ref{item:choice:perturbation}); in particular, we ensure that
the each moduli space is a disjoint union of permutahedra.

More concretely, if $\bm{b}\prec\bm{a}$, and
$\gr_h(\bm{a})-\gr_h(\bm{b})=1$, we define $\Moduli(\bm{a},\bm{b})$ to
be a point (with the framing determined by the sign assignment). If
$\bm{b}\prec\bm{a}$, and $\gr_h(\bm{a})-\gr_h(\bm{b})=2$, there could
be two or four broken flowlines from $\bm{b}$ to $\bm{a}$. In the
former case, we define $\Moduli(\bm{a},\bm{b})$ to be an interval. In
the latter case, we define it to be a disjoint union of two intervals;
there is a choice of matching that one needs to do in this case, and
that is precisely the choice of the ladybug matching. The rest of the
construction proceeds inductively: When we construct the
$n$-dimensional moduli space $\Moduli(\bm{a},\bm{b})$, all the lower
dimensional moduli spaces in the Khovanov flow category
$\KhFlowCat(D)$ have already been constructed, and the lower
dimensional moduli spaces admit covering maps to the corresponding
moduli spaces in the cube flow category $\CubeFlowCat(N)$. Therefore,
by (the precise version of) Condition~(\ref{item:coherence}) above, $\bdy\Moduli(\bm{a},\bm{b})$ has already been constructed,
and it admits a covering map to the boundary of the $n$-dimensional
permutahedron. When $n\geq 3$, by simple-connectedness, this forces
$\bdy\Moduli(\bm{a},\bm{b})$ to be a disjoint union of boundaries of
permutahedra, and therefore, we can define $\Moduli(\bm{a},\bm{b})$ to
be the corresponding disjoint union of permutahedra. When $n=2$, there
is a possible obstruction; however, an exhaustive case check
annihilates that possibility. Thus we can define the Khovanov flow
category, and via the Cohen-Jones-Segal construction, the Khovanov
suspension spectrum $\KhSpace(L)$.

\subsection{The Reidemeister maps agree}\label{subsec:Reidemeister}
Fix a link $L$. Recall from~\cite[Definition~\ref*{KhSp:def:Kh-space} and Section~\ref*{KhSp:sec:invariance}]{RS-khovanov} that the construction
of the suspension spectrum
$\KhSpace(L)$ depends on several choices:
\begin{enumerate}
\item\label{item:choice:ladybug} A choice of ladybug matching (left or right).
\item\label{item:choice:diagram} An oriented link diagram $D$ for $L$,
  with $N$ crossings.
\item\label{item:choice:crossings} An ordering of the crossings of $D$.
\item\label{item:choice:sign} A sign assignment $s$ for the cube $\Cube(N)$.
\item \label{item:choice:neat}A neat embedding $\iota$ and a framing $\Frame$ for the cube
  flow category $\CubeFlowCat(N)$ relative to $s$.
\item\label{item:choice:perturbation} A framed neat embedding $\kappa$ of the Khovanov flow category
  $\KhFlowCat(L)$ relative to some $\TupV{d}$. This framed neat
  embedding is a perturbation of $(\iota,\Frame)$.
\item\label{item:choice:ABepR} Integers $A,B$ and real
  numbers $\ep,R$.
\end{enumerate}
It is proved in~\cite[Section~\ref*{KhSp:sec:invariance}]{RS-khovanov}
that, up to homotopy equivalence, $\KhSpace(L)$ is independent
of these auxiliary choices. We will view the ladybug matching as a
global choice.  The goal of this section is to prove that, on the
level of homology, the rest of these homotopy equivalences agree with
the isomorphisms on Khovanov homology.

Recall that $\KhSpace(L)$ is a formal de-suspension of the realization
$\Realize{\KhFlowCat(L)}$ in the sense
of~\cite[Definition~\ref*{KhSp:def:flow-gives-space}]{RS-khovanov} of
the Khovanov flow category $\KhFlowCat(L)$.  As noted, this
realization depends on the auxiliary choices above. For $D$ a link
diagram, let
${\Realize{\KhFlowCat(D)}}_{o,s,\iota,\Frame,\kappa,A,B,\ep,R}$ denote
the Khovanov space defined using the ordering $o$ of the crossings of
$D$, sign assignment $s$, framed embedding $(\iota,\Frame)$ of the
cube flow category, perturbation $\kappa$ of $\iota$, integers $A$ and
$B$ and real numbers $\epsilon$ and $R$.

For any set of auxiliary choices, there is a canonical identification
of the cells of $\Realize{\KhFlowCat(D)}$ and generators of the
Khovanov complex $\KhCx(D)$, intertwining the cellular cochain
differential on $\Realize{\KhFlowCat(D)}$ and the Khovanov
differential on $\KhCx(D)$. This gives a canonical identification
between
$\wt{H}^*({\Realize{\KhFlowCat(D)}}_{o,s,\iota,\Frame,\kappa,A,B,\ep,R})$
and $\Kh(D)$. We need to show that the stable homotopy equivalences
associated to changes of auxiliary data respect this
identification. We start with
choices~(\ref{item:choice:neat})--(\ref{item:choice:ABepR}):
\begin{prop}\label{prop:extra-choices}
  Let $D$ be a link diagram. Then for any two choices of auxiliary
  data $(\iota,\Frame,\kappa,A,B,\ep,R)$ and
  $(\iota',\Frame',\kappa',A',B',\ep',R')$ the stable homotopy
  equivalence
  \[
  {\Realize{\KhFlowCat(D)}}_{o,s,\iota,\Frame,\kappa,A,B,\ep,R}\simeq {\Realize{\KhFlowCat(D)}}_{o,s,\iota',\Frame',\kappa',A',B',\ep',R'}
  \]
  furnished
  by~\cite[Proposition~\ref*{KhSp:prop:choice-independent}]{RS-khovanov}
  induces the identity map on Khovanov homology.
\end{prop}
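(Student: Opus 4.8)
The plan is to trace through the proof of~\cite[Proposition~\ref*{KhSp:prop:choice-independent}]{RS-khovanov} and to check, stage by stage, that the homotopy equivalence it produces respects the tautological correspondence between cells of $\Realize{\KhFlowCat(D)}$ and Khovanov generators of $\KhCx(D)$. First I would reduce to an elementary change of auxiliary data: the equivalence in the statement is a composite of maps, one for each of $\iota,\Frame,\kappa,A,B,\ep,R$ in turn, and the space of valid choices for each piece is connected through ``elementary'' moves, so it suffices to treat separately (a) an increment of one of $A,B,\ep,R$, (b) an isotopy of the neat embedding $\iota$ (keeping $\Frame$ fixed), and (c) a change of the framing $\Frame$ or of the perturbation $\kappa$.

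For (a) and (b): changing $\ep$ or $R$ within the allowed range, or changing $\iota$ by an isotopy (after stabilizing the ambient dimension if necessary), produces a CW complex canonically homeomorphic to the original by a homeomorphism sending the cell for a generator $\bm{x}$ to the cell for $\bm{x}$, so there is nothing to prove; changing $A$ or $B$ replaces $\Realize{\KhFlowCat(D)}$ by a suspension of it, and since the identification $\wt{H}^*(\Realize{\KhFlowCat(D)})\cong\Kh(D)$ is set up with exactly the grading shift that absorbs $A$ and $B$, the suspension isomorphism intertwines the two identifications. For (c): the relevant homotopy equivalence of~\cite{RS-khovanov} is the realization of an identity-on-objects, moduli-compatible map (or zig-zag of such) of framed flow categories. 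The key input is a general feature of the Cohen--Jones--Segal realization recalled in~\cite[Subsection~\ref*{KhSp:sec:flow-to-space}]{RS-khovanov}: the realization of such a map induces, on reduced cellular cochain complexes, precisely the tautological identification $\KhCx(D)\to\KhCx(D)$, since the cell for $\bm{x}$ is attached using the same moduli data $\Moduli(\bm{x},\cdot)$ on both sides --- in particular the matrix entry between the cells for $\bm{x}$ and $\bm{y}$ is the signed count of points in $\Moduli(\bm{x},\bm{y})$ in either case. Assembling these pieces gives the proposition.

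The hard part is the honest bookkeeping: one must verify that no step in the construction of~\cite{RS-khovanov} permutes cells, changes their dimensions, or reverses the orientation of an attaching map. The subtlest issue is case (c): a priori a change of framing of a moduli space could flip the orientation of a cell and so introduce a sign $-1$, rather than the identity, on that cochain generator. Here one should exploit that the framing of $\CubeFlowCat(N)$ is fixed relative to the sign assignment $s$ (choice~\Item{choice:neat}) and that $\kappa$ is only a perturbation of $(\iota,\Frame)$, so that the interpolating data can be chosen with genuinely coherent framings extending those at both ends; consequently the realized map is the identity on cochains on the nose, not merely up to sign. With this in hand, the formal argument above completes the proof.
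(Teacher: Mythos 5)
Your proposal is correct and follows essentially the same route as the paper: decompose into elementary changes of each parameter, observe that changing $\ep$, $R$, or the embedding/framing data yields an isomorphism of CW complexes (after isotoping the framings, via the relevant lemmas of~\cite{RS-khovanov}) that respects the identification of cells with Khovanov generators, and that changing $A$, $B$ merely suspends the complex in a way absorbed by the grading conventions. Your extra care about a possible sign flip in case (c) is resolved exactly as the paper does, by the fact that any two choices of $(\iota,\Frame,\kappa)$ lead to isotopic framings and the resulting map is a cell-preserving isomorphism of CW complexes.
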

\begin{proof}
  It is immediate from the proof
  of~\cite[Lemma~\ref*{KhSp:lem:CW-indep-ep-R-framing}]{RS-khovanov}
  that the maps associated to changing $\epsilon$ and $R$ are
  isomorphisms of CW complexes respecting the identification of cells
  with generators of $\KhCx$; in particular, they induce the identity
  map on Khovanov homology.  As in the proof
  of~\cite[Lemma~\ref*{KhSp:lem:CW-indep-A-B}]{RS-khovanov},
  increasing $A'$, decreasing $B'$ or increasing $\TupV{d}$ has the
  effect of suspending the CW complex, and the map of realizations is
  the identity map
  \[
  \Sigma^{d}
  {\Realize{\KhFlowCat(D)}}_{o,s,\iota,\Frame,\kappa,A,B,\ep,R}\stackrel{\cong}{\longrightarrow} {\Realize{\KhFlowCat(D)}}_{o,s,\iota,\Frame,\kappa,A',B',\ep,R}.
  \]
  In particular, the induced map on Khovanov homology is the identity.

  By \cite[Lemmas~\ref*{KhSp:lem:perturb-connect-framings}
  and~\ref*{KhSp:lem:change-framing-cube}]{RS-khovanov}, any choices
  of $\kappa$, $\iota$ and $\Frame$ lead to isotopic framings of
  $\KhFlowCat(D)$. Again, from the proof
  of~\cite[Lemma~\ref*{KhSp:lem:CW-indep-ep-R-framing}]{RS-khovanov},
  the map associated to isotoping the framing of $\KhFlowCat(D)$
  is an isomorphism of CW complexes respecting the identification of
  cells with generators of $\KhCx$. So, again, these maps induce the
  identity map on Khovanov homology.
\end{proof}

The fact that the isomorphisms agree for changes in sign assignment
and ordering of crossings is a bit more subtle. First, recall that a
\emph{sign assignment} is a choice of signs for the edges of the
\emph{cube chain complex}
\[
\CubeCx(N)=(\ZZ\stackrel{\cong}{\longrightarrow}\ZZ)^{\otimes N}
\]
so that $\bdy^2=0$, i.e., so that each face anti-commutes.
In~\cite{RS-khovanov}, the sign assignment and ordering of crossings
enter the construction of $\Realize{\KhFlowCat(D)}$ as follows. The
ordering of crossings specifies a map from Khovanov generators to
vertices of the hypercube $\{0,1\}^N$. Using this map, the sign
assignment then specifies framings for the $0$-dimensional moduli
spaces in $\KhFlowCat(D)$. Thus, changing the ordering of the
crossings is equivalent to changing the sign assignment, and we
prefer to view the operation in the latter way (cf.~\cite[Proof of
Proposition~\ref*{KhSp:prop:choice-independent}]{RS-khovanov}).

In the Khovanov homology literature, it seems to be more typical to
phrase results in terms of the standard sign assignment, but with
different orderings of crossings; see, for instance,~\cite[p.\
1457]{Bar-kh-tangle-cob}. In our style, the change of ordering
homomorphism of~\cite[p.\ 1457]{Bar-kh-tangle-cob} is given as
follows. Given sign assignments $\signass$, $\signass'$ choose a map
$\tsignass$ from the vertices of $\CubeCx(N)$ to $\{\pm 1\}$ so that
the map $(\CubeCx(N),\bdy_{\signass})\to
(\CubeCx(N),\bdy_{\signass'})$ given by $v\mapsto \tsignass(v)\cdot v$
is a chain map. (We will call $\tsignass$ a \emph{gauge transformation}
from $\signass$ to $\signass'$.)
Then, the map of Khovanov complexes takes a generator $x$ lying over a
vertex $v$ of the hypercube to $\tsignass(v)\cdot x$.  The following
lemma is relevant:
\begin{lemma}\label{lem:sign-map-nearly-unique}
  Given sign assignments $\signass$, $\signass'$ for $\CubeCx(N)$
  there are exactly two gauge transformations $\tsignass_1,\tsignass_2$
  from $\signass$ to $\signass'$. Moreover,
  $\tsignass_2=-\tsignass_1$.
\end{lemma}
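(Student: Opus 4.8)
The plan is to reduce the statement to a connectivity fact about the $1$-skeleton of the cube $\{0,1\}^N$. First I would observe that the condition ``$v\mapsto \tsignass(v)\cdot v$ is a chain map from $(\CubeCx(N),\bdy_{\signass})$ to $(\CubeCx(N),\bdy_{\signass'})$'' translates, edge by edge, into a system of equations: for each edge $e$ of the cube from a vertex $u$ to a vertex $w$ (with $u<w$ in the cube partial order), the sign $\signass(e)$ attached by $\signass$ and the sign $\signass'(e)$ attached by $\signass'$ must satisfy $\tsignass(w)\signass(e) = \tsignass(u)\signass'(e)$, i.e.
\[
\frac{\tsignass(w)}{\tsignass(u)} = \frac{\signass'(e)}{\signass(e)} =: \epsilon(e)\in\{\pm1\}.
\]
Thus a gauge transformation is exactly a function $\tsignass\from\{0,1\}^N\to\{\pm1\}$ whose ``coboundary'' along every edge $e$ equals the prescribed value $\epsilon(e)$ determined by $\signass,\signass'$.

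Next I would establish \emph{existence}. The point is that $\epsilon$, regarded as a $\{\pm1\}$-valued $1$-cochain on the cube, is a cocycle: around any $2$-face the product of the four $\epsilon(e)$'s is $1$, because both $\signass$ and $\signass'$ are sign assignments, so around each square $\prod \signass(e)=-1=\prod\signass'(e)$, and the ratios multiply to $(-1)/(-1)=1$. Since the cube $\{0,1\}^N$ is simply connected (indeed contractible), its first $\ZZ/2$-cohomology vanishes, so every such edge-cocycle $\epsilon$ is the coboundary of some vertex function $\tsignass$. Concretely, one can build $\tsignass$ by hand: set $\tsignass(\vec0)=1$, and for any vertex $v$ pick a path from $\vec0$ to $v$ in the $1$-skeleton and let $\tsignass(v)$ be the product of the $\epsilon(e)$'s along that path; the cocycle condition on $2$-faces, together with the fact that the $2$-faces generate all cycles in the cube graph, makes this independent of the chosen path, hence well-defined, and by construction it has the right coboundary.

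Finally I would prove \emph{``exactly two.''} If $\tsignass_1$ and $\tsignass_2$ are both gauge transformations, then $\tsignass_2/\tsignass_1$ is a $\{\pm1\}$-valued vertex function whose coboundary along every edge is $\epsilon(e)/\epsilon(e)=1$; that is, it is constant on each connected component of the $1$-skeleton of $\{0,1\}^N$. Since that graph is connected, $\tsignass_2/\tsignass_1$ is globally constant, equal to either $+1$ or $-1$; hence $\tsignass_2=\tsignass_1$ or $\tsignass_2=-\tsignass_1$, and negating any gauge transformation visibly produces another one, so both possibilities occur. I expect the only real content is the $2$-face (cocycle) computation guaranteeing existence; the ``two solutions'' half is a one-line connectedness argument. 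If one prefers to avoid invoking cohomology, the path-product construction above can be written out directly, and the main obstacle is then just checking path-independence, which is exactly the statement that the $2$-dimensional faces span the cycle space of the cube graph—standard, and can be dispatched by induction on $N$.
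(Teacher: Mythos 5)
Your proof is correct and follows essentially the same route as the paper's (which is a one-line sketch of the same induction: the value at $(0,\dots,0)$ together with the edge-by-edge chain-map condition determines $\tsignass$, giving exactly two solutions). Your write-up is in fact more complete, since you also verify existence via the cocycle condition on $2$-faces (using that both sign assignments make each face anticommute) and path-independence in the connected, simply connected cube, a point the paper's sketch leaves implicit.
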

\begin{proof}
  This is a straightforward induction argument, showing that the value
  of $\tsignass$ on the vertex $(0,\dots,0)$, say, and the fact that
  $v\mapsto \tsignass(v)\cdot v$ is a chain map uniquely determine
  $\tsignass$.
\end{proof}

\begin{prop}\label{prop:signs}
  Let $\Phi_{\signass_0,\signass_1}$ denote the map of Khovanov
  homotopy types associated
  in~\cite[Proposition~\ref*{KhSp:prop:choice-independent}]{RS-khovanov}
  to a change of sign assignments  
  and $F_{\signass_0,\signass_1}$ the map of Khovanov chain complexes
  induced by a change of sign assignments, as alluded to in~\cite[p.\ 
  1457]{Bar-kh-tangle-cob}. Then the following diagram commutes up to
  sign:
  \[
  \xymatrix{
    \wt{H}^i(\KhSpace^j(D,\signass_0))\ar[d]_\cong\ar[r]^{\Phi^*_{\signass_0,\signass_1}}_\cong &
    \wt{H}^i(\KhSpace^j(D,\signass_1))\ar[d]^\cong\\
    \Kh^{i,j}(D,\signass_0)\ar[r]^{F_{\signass_0,\signass_1}}_\cong & \Kh^{i,j}(D,\signass_1).
  }
  \]
\end{prop}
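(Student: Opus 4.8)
The plan is to reduce to the level of CW complexes, and in fact to an explicit comparison of the two ways that a change of sign assignment enters the construction: it reframes the $0$-dimensional moduli spaces of $\KhFlowCat(D)$, and (equivalently, after a gauge transformation $\tsignass$) it rescales the Khovanov generators by $\pm 1$. First I would recall, from the discussion preceding \Lemma{sign-map-nearly-unique}, that the Khovanov homotopy equivalence $\Phi_{\signass_0,\signass_1}$ of~\cite[Proposition~\ref*{KhSp:prop:choice-independent}]{RS-khovanov} is built by choosing a gauge transformation $\tsignass$ from $\signass_0$ to $\signass_1$ and using it to construct an isomorphism of CW complexes
\[
\Psi_{\tsignass}\co {\Realize{\KhFlowCat(D)}}_{o,\signass_0,\dots}\stackrel{\cong}{\longrightarrow}{\Realize{\KhFlowCat(D)}}_{o,\signass_1,\dots}
\]
which, on the canonical cells, sends the cell labeled by a Khovanov generator $x$ lying over a vertex $v\in\{0,1\}^N$ to (plus or minus) the cell labeled by $\tsignass(v)\cdot x$. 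This is exactly the prescription that $F_{\signass_0,\signass_1}$ uses on chain complexes. So the content is bookkeeping: the identification of $\wt{H}^*({\Realize{\KhFlowCat(D)}})$ with $\Kh^{*,*}(D)$ is via the canonical cell-to-generator correspondence and the fact that the cellular cochain differential is the Khovanov differential; under this identification $\Psi_{\tsignass}^*$ is literally the map $x\mapsto \tsignass(v)\cdot x$, up to an overall sign coming from the grading shift and orientation conventions in the Cohen--Jones--Segal construction. Hence the diagram commutes up to sign.

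In more detail, the key steps, in order, are: (1) Spell out how a sign assignment $\signass$ enters $\Realize{\KhFlowCat(D)}$ — via the framings of the $0$-dimensional moduli spaces, as recalled in the paragraph before \Lemma{sign-map-nearly-unique} — so that changing $\signass_0$ to $\signass_1$ is a change of framing data on the flow category. (2) Recall, from the cited proof of~\cite[Proposition~\ref*{KhSp:prop:choice-independent}]{RS-khovanov}, that the associated homotopy equivalence is an honest isomorphism of CW complexes, gotten by the gauge transformation $\tsignass$: reindexing the cells of the realization by $v\mapsto \tsignass(v)\cdot v$ carries the $\signass_0$-framed cube flow category isomorphically to the $\signass_1$-framed one, hence $\Realize{\KhFlowCat(D,\signass_0)}$ to $\Realize{\KhFlowCat(D,\signass_1)}$. (3) Chase the canonical cell-to-generator identification through this isomorphism to see that on cochains it is exactly $x\mapsto \tsignass(v)\cdot x$, which is $F_{\signass_0,\signass_1}$ by definition. (4) Invoke \Lemma{sign-map-nearly-unique} to note that the only ambiguity in the choice of $\tsignass$ is an overall sign, which is harmless since we only claim commutativity up to sign; likewise any sign discrepancies from the CJS grading-shift conventions are global and thus absorbed.

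The step I expect to be the main obstacle is (2)–(3): verifying that the homotopy equivalence produced in~\cite{RS-khovanov} for a change of sign assignment really is, on the nose, the relabeling-by-$\tsignass$ isomorphism of CW complexes, and then tracking the orientation of each cell through the Pontryagin--Thom/Cohen--Jones--Segal machinery to confirm that the induced cochain map has no hidden local signs beyond the global $\pm 1$. This is bookkeeping rather than a conceptual difficulty — the framings of the $0$-cells transform by exactly $\tsignass(v)$, and all higher moduli spaces are unchanged — but it does require being careful about the conventions relating framings of $0$-manifolds to signs of points and to orientations of cells. Once that is pinned down, commutativity up to sign is immediate.
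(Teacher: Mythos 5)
There is a genuine gap in step (2), and it is the load-bearing step. You assert that the homotopy equivalence associated in~\cite[Proposition~\ref*{KhSp:prop:choice-independent}]{RS-khovanov} to a change of sign assignment is ``an honest isomorphism of CW complexes, gotten by the gauge transformation $\tsignass$,'' obtained by relabeling cells. That is not how the map is constructed, and there is a real obstruction to constructing it that way: a sign assignment enters the realization through the \emph{framings} of the $0$-dimensional moduli spaces, and two framings of a point differing in sign are not isotopic ($\pi_0(O(n))=\Z/2$). So, unlike the changes of $\iota$, $\Frame$, $\kappa$, $\ep$, $R$ handled in \Proposition{extra-choices} (where the framings are connected by isotopies and one does get cellular isomorphisms), a change of $\signass$ cannot be absorbed by an isotopy of framing data, and no cell-relabeling isomorphism is produced in the cited reference. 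Moreover, ``the cell labeled by $\tsignass(v)\cdot x$'' is not meaningful at the space level: cells of $\Realize{\KhFlowCat(D)}$ are not signed; the signs only appear after passing to cellular cochains. Your proposal therefore assumes the conclusion (that $\Phi^*$ acts on generators by $x\mapsto\pm\tsignass(v)\cdot x$) rather than deriving it.

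What the actual argument does is quite different. The map $\Phi_{\signass_0,\signass_1}$ is defined by forming $D\amalg U$ for a one-crossing unknot diagram $U$, restricting to the subcategory $\KhFlowCat(D\amalg U)_+$ where the $U$-circles are labeled $x_+$, and observing that this yields a cofibration sequence
\[
{\Realize{\KhFlowCat(D)}}_{\signass_0}\to
{\Realize{\KhFlowCat(D\amalg U)_+}}_{\signass}\to
\Sigma{\Realize{\KhFlowCat(D)}}_{\signass_1}
\]
whose total space is contractible (its cohomology vanishes); $\Phi_{\signass_0,\signass_1}$ is the resulting Puppe map. One then computes the induced map on cohomology as the connecting homomorphism of the corresponding short exact sequence of Khovanov complexes and finds explicitly that $\partial(v,x)=\tsignass(v)(v,x)$, where $\tsignass(v)$ is the sign $\signass$ assigns to the edge from $(v,0)$ to $(v,1)$; this exhibits $\Phi^*$ as a gauge transformation. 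Only at that point does \Lemma{sign-map-nearly-unique} finish the argument, exactly as in your step (4). So your identification of the target (gauge transformations are unique up to global sign) is right, but the route to showing $\Phi^*$ is a gauge transformation needs the cofibration/connecting-homomorphism computation, not a relabeling of cells.
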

\begin{proof}
  By \Lemma{sign-map-nearly-unique}, it suffices to show that the map
  $\Phi_{\signass_0,\signass_1}^*$ on cochain complexes takes each
  generator to $\pm$ itself, where the signs are determined by a map
  of $\CubeCx(N)$. We recall the definition of
  $\Phi_{\signass_0,\signass_1}$. Consider the diagram $D\amalg U$
  obtained by taking the disjoint union of $D$ with a $1$-crossing diagram
  for the unknot. Choose $U$ so that the $0$-resolution of $U$ has two
  components and the $1$-resolution of $U$ has one component. Let
  $\KhFlowCat(D\amalg U)_+$ denote the full subcategory generated by
  the objects in which the (one or two) circles corresponding to $U$
  are labeled by $x_+$. Write $\KhFlowCat(D\amalg U)_0$ to be the
  subcategory of $\KhFlowCat(D\amalg U)_+$ in which we take the
  $0$-resolution of $U$ and $\KhFlowCat(D\amalg U)_1$ to be the
  subcategory of $\KhFlowCat(D\amalg U)_+$ in which we take the
  $1$-resolution of $U$. Then each of $\KhFlowCat(D\amalg U)_0$ and
  $\KhFlowCat(D\amalg U)_1$ is isomorphic to $\KhFlowCat(D)$.  Choose
  a sign assignment $\signass$ for $D\amalg U$ so that the induced sign
  assignment for $\KhFlowCat(D\amalg U)_i$ is $\signass_i$. Thus, if
  we choose the embedding data compatibly,
  $\Realize{\KhFlowCat(D\amalg
    U)_0}={\Realize{\KhFlowCat(D)}}_{\signass_0}$ and $\Realize{\KhFlowCat(D\amalg
    U)_1}=\Sigma{\Realize{\KhFlowCat(D)}}_{\signass_1}$.  Moreover:
  \begin{enumerate}
  \item\label{item:sign-cofib} There is a cofibration sequence
    \[
    {\Realize{\KhFlowCat(D)}}_{\signass_0}\to
    {\Realize{\KhFlowCat(D\amalg U)_+}}_{\signass}\to
    \Sigma{\Realize{\KhFlowCat(D)}}_{\signass_1}.
    \]
  \item\label{item:sign-contr} The cohomology $\wt{H}^*({\Realize{\KhFlowCat(D\amalg
      U)_+}}_{\signass})$ is trivial, so ${\Realize{\KhFlowCat(D\amalg
      U)_+}}_{\signass}$ is contractible.
  \end{enumerate}
  The map $\Phi_{\signass_0,\signass_1}\co \KhSpace^j(D,\signass_1)\to
  \KhSpace^j(D,\signass_0)$ is the Puppe map associated
  to the cofibration sequence in~(\ref{item:sign-cofib}), which is an
  isomorphism by~(\ref{item:sign-contr}). (The suspension coming from
  the Puppe construction cancels with the suspension in
  $\Realize{\KhFlowCat(D\amalg
    U)_1}=\Sigma{\Realize{\KhFlowCat(D)}}_{\signass_1}$.)

  The induced map $\Phi^*_{\signass_0,\signass_1}$ is the connecting
  homomorphism
  \[
  \partial\co
  \Kh(D,\signass_0)=\wt{H}^*({\Realize{\KhFlowCat(D)}}_{\signass_0})\to
  \wt{H}^*({\Realize{\KhFlowCat(D)}}_{\signass_1})=\Kh(D,\signass_1)
  \]
  associated to the short exact sequence of chain complexes
  \[
  0 \to \KhCx(D,\signass_1)\stackrel{i}{\longrightarrow} \KhCx(D\amalg
  U,s)_+ \stackrel{p}{\longrightarrow} \KhCx(D,\signass_0)\to 0.
  \]
  For $v\in\{0,1\}^N$ let $\tsignass(v)$ be the sign assigned by
  $\signass$ to the edge from $(v,0)$ to $(v,1)$. Explicitly, the
  relevant maps are given by
  \begin{align*}
    &i((v,x))=(v,x)\otimes x_+\qquad p((v,x)\otimes x_+\otimes x_+)=(v,x)  \qquad p((v,x)\otimes x_+)=0\\
    &\diff((v,x)\otimes x_+\otimes x_+)=\tsignass(v)(v,x)\otimes
    x_++\diff_{\signass_0}(v,x)\otimes x_+\otimes x_+\\
    &\diff((v,x)\otimes
    x_+)=\diff_{\signass_1}(v,x)\otimes x_+.
  \end{align*}
  (Here, the $x_+$'s are labels for the $0$- or $1$-resolution of $U$,
  and $\diff$ is the Khovanov differential for $\KhCx(D\amalg
  U,s)_+$,
  and $(v,x)$ is a Khovanov generator for $\KhCx(D)$ lying over the vertex $v\in
  \{0,1\}^n$.) So, the connecting homomorphism $\partial$ is given by
  $\partial(v,x)=\tsignass(v)(v,x)$, which is as alluded to
  in~\cite[p.\ 1457]{Bar-kh-tangle-cob}.
\end{proof}

In light of Propositions~\ref{prop:extra-choices}
and~\ref{prop:signs}, it is safe to drop the auxiliary data
$(o,s,\iota,\Frame,\kappa,A,\allowbreak B,\ep,R)$ from both the notation and the
discussion, and we will do so for the rest of the paper.

\begin{prop}\label{prop:Reid-agree}
  Let $D$ and $D'$ be link diagrams representing $L$, and choose a
  sequence of Reidemeister moves connecting $D$ and $D'$. Let $\Phi\co
  \KhSpace^j(D')\to \KhSpace^j(D)$ denote the stable homotopy equivalence
  given by~\cite[Theorem~\ref*{KhSp:thm:kh-space}]{RS-khovanov} and let
  $F\co \Kh^{i,j}(D)\to \Kh^{i,j}(D')$ denote the isomorphism given
  by~\cite{Kho-kh-tangles}, \cite{Jac-kh-cobordisms}
  or~\cite{Bar-kh-tangle-cob}. Then the following diagram commutes up
  to sign:
  \[
  \xymatrix{
    \wt{H}^i(\KhSpace^j(D))\ar[r]^{\Phi^*}_\cong\ar[d]_\cong &
    \wt{H}^i(\KhSpace^j(D'))\ar[d]^\cong\\
    \Kh^{i,j}(D)\ar[r]^F_\cong & \Kh^{i,j}(D').
  }
  \]
  (Here, the vertical isomorphisms are also given by~\cite[Theorem~\ref*{KhSp:thm:kh-space}]{RS-khovanov}.)
\end{prop}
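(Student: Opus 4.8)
\emph{Proof proposal.} The plan is to reduce to a single Reidemeister move and, for each type of move (R1, R2, R3 in their several variants), to unwind the construction of the homotopy equivalence $\Phi$ from \cite{RS-khovanov} at the level of cellular cochains and compare it with the standard chain homotopy equivalence. Since both $\Phi$ and $F$ associated to a sequence of Reidemeister moves are, by construction, composites of the maps associated to the individual moves, it suffices to treat one move at a time; and since the vertical maps in the diagram are the canonical cell--generator identifications, the content to be verified is precisely that $\Phi^* = \pm F$ under those identifications.

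For each move I would first recall that the homotopy equivalence in \cite{RS-khovanov} is built from an elementary operation on Khovanov flow categories --- a ``delooping'' of an extra circle for R1, and the deletion of a contractible (acyclic) subcube for R2 and R3 --- and that in each case $\Phi$ arises exactly as in the proof of \Proposition{signs}: one realizes $\KhFlowCat(D)$ and $\KhFlowCat(D')$ (up to suspension) as the sub- and quotient flow categories in a cofibration sequence
\[
\Realize{\KhFlowCat(D)}\to \Realize{\KhFlowCat(\widehat{D})}\to \Sigma\Realize{\KhFlowCat(D')},
\]
with $\Realize{\KhFlowCat(\widehat{D})}$ contractible (its reduced cohomology being that of an acyclic algebraic mapping cone), and $\Phi$ is the resulting Puppe equivalence, the Puppe suspension cancelling the suspension above. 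Using the identification --- already fixed in this paper --- of the reduced cellular cochain complex of $\Realize{\KhFlowCat(D)}$ with $\KhCx(D)$ intertwining differentials, $\Phi^*$ is then computed as the connecting homomorphism of an explicit short exact sequence of Khovanov-type complexes, just as the connecting homomorphism was read off at the end of the proof of \Proposition{signs}. I would write this connecting homomorphism out on Khovanov generators for each move.

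It then remains to compare this with the standard Reidemeister isomorphism. The chain homotopy equivalences of \cite{Kho-kh-categorification} and \cite{Bar-kh-tangle-cob} (hence the cobordism-map versions of \cite{Jac-kh-cobordisms, Kho-kh-tangles}, which agree with them up to sign) are themselves obtained by Gaussian elimination / delooping --- the same cancellation of acyclic summands --- so presenting them too as connecting homomorphisms of the same short exact sequences, or checking both maps directly on a generating set, shows that $\Phi^* = \pm F$. For R1 and R2 this is a short computation once the local models are recorded. The main obstacle is R3: there the acyclic subcomplex being cancelled has a more intricate structure (several cancellation pairs whose matchings interact with the ladybug matching and the framings), and one must check that the spectrum-level destabilization of \cite{RS-khovanov} effects the same sequence of cancellations, up to sign, as Bar-Natan's. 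Alternatively I would sidestep the direct R3 computation by factoring the R3 homotopy equivalence through R2 moves on both the spectrum and the chain level --- exploiting that the standard $F$ for R3 is itself assembled from the $F$'s for R2 --- thereby reducing R3 to the already-settled R2 case. Either way, the sign and naturality bookkeeping for R3 is where the real work lies.
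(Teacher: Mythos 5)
Your reduction to a single Reidemeister move, and your identification of what must be checked (that $\Phi^*=\pm F$ under the canonical cell--generator identification), match the paper; but from there the paper takes a genuinely different, and much shorter, route that avoids precisely the computation you defer. Instead of unwinding $\Phi^*$ move by move, the paper proves a locality statement (\Lemma{maps-from-tangles}): because the acyclic sub- and quotient complexes cancelled in the construction of $\Phi$ are cut out by conditions on circles contained in the local tangle $T_k'$, the map $\Phi^*$ has the form $\Id\otimes F_k$ for a map of Khovanov's tangle bimodules $\mathcal{M}(T_k')\to\mathcal{M}(T_k)$. The standard map $F$ is by definition also induced by a bimodule map, and the tangles $T_k$, $T_k'$ are invertible; Khovanov's rigidity result~\cite[Corollary 2]{Kho-kh-cob} then says that any two isomorphisms between the invariants of invertible tangles agree up to sign, so $\Phi^*=\pm F$ with no comparison of explicit formulas. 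Your approach buys concreteness (one actually sees the maps); the paper's buys a uniform treatment of all move types and, crucially, exempts one from the R3 sign bookkeeping entirely.

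That R3 step is also where your proposal has a genuine gap as written. The direct computation is only promised, not performed, and the proposed fallback---``factoring the R3 homotopy equivalence through R2 moves''---cannot be taken literally, since R3 is not a composite of R2 moves. What is true is that both Bar-Natan's chain-level R3 equivalence and the spectrum-level one of~\cite{RS-khovanov} are assembled from R2 equivalences via a mapping-cone (categorified Kauffman trick) argument; but comparing the two cone constructions requires showing that the saddle maps over which the cones are formed, and the induced maps of cones, also agree up to sign---additional work of essentially the same difficulty as the direct check, not a reduction to the already-settled R2 case. If you wish to avoid the tangle-rigidity argument, you would need to carry that comparison out explicitly.
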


\begin{figure}
  \centering
  \begin{overpic}[tics=5]{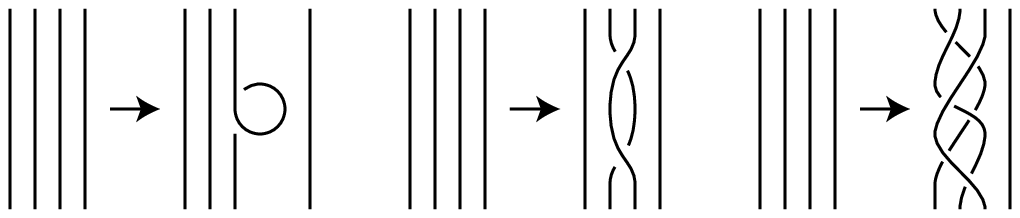}
    \put(5,3){$T_1$}
    \put(24,3){$T'_1$}
    \put(43,3){$T_2$}
    \put(59,3){$T'_2$}
    \put(76,3){$T_3$}
    \put(92,3){$T'_3$}
  \end{overpic}  
  \caption{\textbf{Tangles for Reidemeister moves.} The number of
    vertical lines and which of them are involved in the Reidemeister
    move are allowed to vary (but the number of lines must be even).}
  \label{fig:Reidemeister}
\end{figure}

In~\cite{Kho-kh-tangles}, Khovanov associated to any even integer $2m$
a ring $H^m$, and to any $(2m_1,2m_2)$-tangle $T$ a bimodule
$\mathcal{M}(T)$ over the rings $H^{m_1}$ and $H^{m_2}$. As he
observed in~\cite{Kho-kh-cob}, these tangle invariants can be used to
state and prove locality properties for the cobordism
maps. \Proposition{Reid-agree} will follow easily from the next lemma,
which is along the same lines:
\begin{lemma}\label{lem:maps-from-tangles}
  The maps on Khovanov homology associated
  in~\cite[Propositions~\ref*{KhSp:prop:RI}--\ref*{KhSp:prop:RIII}]{RS-khovanov}
  to Reidemeister moves are induced by maps of tangles in the
  following sense. Let $T_k$ and $T_k'$ be the tangles shown in
  \Figure{Reidemeister}, so $T_k$ differs from $T_k'$ by a
  Reidemeister move. Suppose that $D$ and $D'$ are knot diagrams so
  that $D_k'$ is obtained by replacing a copy of $T_k$ inside $D$ by a
  copy of $T_k'$. Let
  \[
  \Phi_k^*\co \Kh^{i,j}(D')\cong \wt{H}^{i}(\KhSpace^j(D'))\to
  \wt{H}^{i}(\KhSpace^j(D))\cong \Kh^{i,j}(D)
  \]
  be the map induced by the map of spaces $\Phi$ defined
  in~\cite[Proposition~\ref*{KhSp:prop:RI}--\ref*{KhSp:prop:RIII}]{RS-khovanov}. Then
  there are bimodule maps $F_k\co \mathcal{M}(D'_k)\to
  \mathcal{M}(D_k)$ so that
  \[
  \Phi_k^* = \Id\otimes F_k\co H_*(\mathcal{M}(D\setminus T_k)\otimes
  \mathcal{M}(T_k'))\to H_*(\mathcal{M}(D\setminus T_k)\otimes
  \mathcal{M}(T_k)).
  \]
\end{lemma}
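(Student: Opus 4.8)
The plan is to exploit the fact that the Reidemeister maps of \cite{RS-khovanov} are constructed \emph{locally}: each is a Puppe map attached to a cofibration sequence obtained by splitting the Khovanov flow category along the crossings that the Reidemeister move alters, all of which lie inside $T_k$. So I would first record how the constructions of \cite{RS-khovanov} interact with the decomposition of a diagram along a tangle. The cube of resolutions of $D$ is the product of the cube of resolutions of $D\setminus T_k$ and that of $T_k$; feeding this into Khovanov's arc-ring formalism gives $\KhCx(D)\cong \mathcal{M}(D\setminus T_k)\otimes_{H^m}\mathcal{M}(T_k)$ as in \cite{Kho-kh-tangles}, and on the flow-category side the moduli spaces of $\KhFlowCat(D)$ between generators that agree outside $T_k$ are exactly the corresponding moduli spaces of $\KhFlowCat(T_k)$. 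I would make this compatibility precise first --- including how it behaves with respect to the grading shifts, the formal (de)suspensions, and the (global) ladybug matching used throughout \cite{RS-khovanov} --- so that a full subcategory of $\KhFlowCat(D)$ cut out by conditions on the $T_k$-resolutions alone realizes, up to these shifts, to the smash product of $\Realize{\KhFlowCat(D\setminus T_k)}$ with the realization of the corresponding subcategory of $\KhFlowCat(T_k)$.

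Second, I would revisit each Reidemeister move as treated in \cite[Propositions~\ref*{KhSp:prop:RI}--\ref*{KhSp:prop:RIII}]{RS-khovanov}. In each case $\Phi_k$ arises from a cofibration sequence in which one of the two pieces obtained by resolving the relevant crossing(s) of $T_k$ (respectively $T_k'$) has contractible realization --- the same acyclicity mechanism as in the proof of \Proposition{signs} above. Since the split is along $T_k$-resolutions only, under the identification above the short exact sequence of cellular cochain complexes underlying this cofibration sequence is obtained by applying $\Id_{\mathcal{M}(D\setminus T_k)}\otimes(-)$ to a short exact sequence of complexes of $H^m$-bimodules built from $\mathcal{M}(T_k)$, $\mathcal{M}(T_k')$ and an acyclic complex; hence its connecting homomorphism, which is $\Phi_k^*$, has the form $\Id\otimes f_k$ for a bimodule chain map $f_k\co \mathcal{M}(T_k')\to\mathcal{M}(T_k)$. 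For the RIII move I would, following \cite{RS-khovanov}, either run the same local argument directly or express the move as a composition of RII-type moves; either way locality --- and hence the conclusion --- is preserved under composition.

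Third, I would check that $f_k$ is genuinely a bimodule map. The $H^m$-actions on $\mathcal{M}(D)\cong \mathcal{M}(D\setminus T_k)\otimes_{H^m}\mathcal{M}(T_k)$ are induced by the Khovanov TQFT applied to the flat saddle cobordisms implementing the arc-ring multiplication, and these cobordisms are supported away from the crossings modified by the Reidemeister move; thus the arc-ring action commutes with the splitting maps of the cofibration sequence, so $f_k$ intertwines the left and right actions. Setting $F_k=(f_k)_*$ and passing to homology then gives $\Phi_k^*=\Id\otimes F_k$ on $H_*(\mathcal{M}(D\setminus T_k)\otimes\mathcal{M}(T_k'))$, which is the assertion of the lemma, and \Proposition{Reid-agree} will follow by comparing $F_k$ with the standard local model on the small tangles of \Figure{Reidemeister}.

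The step I expect to be the main obstacle is the first one: pinning down the compatibility of the Cohen-Jones-Segal realization with the tensor/product decomposition along a tangle --- keeping track of the several grading shifts, formal desuspensions, and the ladybug matching --- and then checking, move by move and most delicately for RIII, that the particular subcategories split off in \cite{RS-khovanov} are indeed cut out by $T_k$-resolution conditions alone and have contractible realizations. Once that bookkeeping is in place, the factorization $\Phi_k^*=\Id\otimes f_k$ and the bimodule property are essentially formal.
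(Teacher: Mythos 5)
Your core idea---that the Reidemeister maps of~\cite{RS-khovanov} are local, being built by cancelling acyclic pieces cut out by conditions on the resolutions and decorations inside $T_k'$ alone---is exactly the point of the paper's (three-sentence) proof. But you route it through considerably more machinery than is needed, and the step you flag as the main obstacle is one you have created for yourself. The lemma is a statement about $\Phi_k^*$, the induced map on \emph{cohomology}, and~\cite[Section 6]{RS-khovanov} already describes that map at the chain level: it is a composition of inclusions and projections cancelling acyclic subcomplexes and quotient complexes defined by requiring certain circles in $T_k'$ to be decorated by $x_+$ or $x_-$. Since those conditions refer only to $T_k'$, the same formulas define acyclic subcomplexes and quotient complexes of the bimodule $\mathcal{M}(T_k')$, and the factorization $\Phi_k^*=\Id\otimes F_k$ (together with the bimodule property of $F_k$, since the arc-ring action is supported away from $T_k$) follows at once. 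In particular you do not need to establish any compatibility of the Cohen--Jones--Segal realization with the tensor decomposition along a tangle, nor worry about smash products, formal desuspensions, or the ladybug matching at the space level: the space-level map $\Phi$ is only used to \emph{produce} $\Phi_k^*$, and once you are on homology you can work entirely with the chain-level description. Your space-level program is plausible and would prove something stronger (a tangle-local statement about the spectra themselves), but it is substantially harder than what the lemma requires, and as written it leaves its hardest ingredient---the multiplicativity of the realization---unproved.
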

\begin{proof}
  It is straightforward to verify that the maps (on homology) given
  in~\cite[Section 6]{RS-khovanov} make sense on the level of
  tangles. The key point is that the maps are defined locally, by
  canceling acyclic subcomplexes and quotient complexes given by
  requiring certain circles in $T_k'$ to be decorated by one of $x_+$
  or $x_-$. The same definitions define acyclic subcomplexes and
  quotient complexes of
  $\mathcal{M}(T_k')$.
\end{proof}

\begin{proof}[Proof of \Proposition{Reid-agree}]
  It suffices to check the result when $D$ and $D'$ differ by a single
  Reidemeister move. In this case, by \Lemma{maps-from-tangles}, the
  maps $\Phi^*$ are induced by isomorphisms of Khovanov's tangle
  invariants. By definition, the maps $F$ are also induced by
  isomorphisms of tangle invariants. But the tangles $T_k$, $T'_k$
  involved are invertible, so by~\cite[Corollary
  2]{Kho-kh-cob} any two such isomorphisms agree up to sign.
\end{proof}

\subsection{Maps associated to cups, caps and
  saddles}\label{subsec:cup-cap-sad}
Other than Reidemeister moves, there are three kinds of elementary
cobordisms of knots: cups, saddles and caps, which correspond to index
$0$, $1$ and $2$ critical points of Morse functions, respectively.
\subsubsection{Cups and caps}
Let $L$ be a link diagram and $L'=L\amalg U$ the disjoint union of $L$ and an
unknot; place $U$ so that it does not introduce new crossings. Passing
from $L$ to $L'$ corresponds to a particular elementary cobordism, a
\emph{cup}, while passing from $L'$ to $L$ corresponds to a different
elementary cobordism, a \emph{cap}. There are maps
\begin{align*}
  F_\cup&\co \Kh^{i,j}(L)\to \Kh^{i,j+1}(L')\\
  F_\cap&\co \Kh^{i,j}(L')\to \Kh^{i,j+1}(L)
\end{align*}
on Khovanov homology associated to a cup and a cap, respectively.  We
want to define maps
\begin{align*}
  \Phi_\cup&\co \KhSpace^{j+1}(L')\to \KhSpace^{j}(L)\\
  \Phi_\cap&\co \KhSpace^{j+1}(L)\to \KhSpace^{j}(L')
\end{align*}
associated to a cup and a cap so that the induced maps on cohomology
are $F_\cup$ and $F_\cap$.

In each resolution $L'_v$ of $L'$ there is a component $U_v$
corresponding to $U$. We can write $\KhCx(L')=\KhCx(L)_+\oplus
\KhCx(L)_-$, where $\KhCx(L)_+$ (\respectively $\KhCx(L)_-$) has basis
those generators of $\KhCx(L')$ in which $U_v$ is labeled by $x_+$
(\respectively $x_-$). Each of $\KhCx(L)_+$ and $\KhCx(L)_-$ are
canonically isomorphic to $\KhCx(L)$. The cobordism maps on Khovanov
homology associated to cups and caps are defined on the chain level by
\begin{align*}
  F_\cup&\co
  \KhCx(L)\stackrel{\cong}{\longrightarrow}\KhCx(L)_+\hookrightarrow
  \KhCx(L')\\
  F_\cap&\co \KhCx(L')\twoheadrightarrow \KhCx(L')_-
  \stackrel{\cong}{\longrightarrow} \KhCx(L),
\end{align*}
to be the inclusion and projection,
respectively~\cite[Figure 15]{Jac-kh-cobordisms}.
Note that in the special case that $L$ is empty these maps restrict to
the unit and counit on $H^*(S^2)$, respectively.

Similarly, the flow category $\KhFlowCat(L')$ is a disjoint union
$\KhFlowCat(L')=\KhFlowCat(L)_+\amalg \KhFlowCat(L)_-$, where
$\KhFlowCat(L)_+$ (\respectively $\KhFlowCat(L)_-$) is the full
subcategory of $\KhFlowCat(L')$ whose objects are decorated
resolutions $(v,x)$ with $U_v$ labeled by $x_+$ (\respectively
$x_-$). Thus, $\Realize{\KhFlowCat(L)}\cong
\Realize{\KhFlowCat(L)_+}\vee \Realize{\KhFlowCat(L)_-}$.  Each of
$\KhFlowCat(L)_+$ and $\KhFlowCat(L)_-$ are canonically isomorphic to
$\KhFlowCat(L)$. Define
\begin{align*}
  \Phi_\cup &\co
  \Realize{\KhFlowCat(L')}=\Realize{\KhFlowCat(L)_+}\vee
  \Realize{\KhFlowCat(L)_-}\twoheadrightarrow\Realize{\KhFlowCat(L)_+}\stackrel{\cong}{\longrightarrow}\Realize{\KhFlowCat(L)}\\
  \Phi_\cap & \co
  \Realize{\KhFlowCat(L)}\stackrel{\cong}{\longrightarrow}\Realize{\KhFlowCat(L)_-}\hookrightarrow \Realize{\KhFlowCat(L)_+}\vee
  \Realize{\KhFlowCat(L)_-}=\Realize{\KhFlowCat(L')}
\end{align*}
to be the projection and inclusion, respectively.

\begin{lemma}\label{lem:cup-cap-agree}
  The map on cohomology induced by $\Phi_\cup$ (\respectively $\Phi_\cap$) is
  the cobordism maps $F_\cup$ (\respectively $F_\cap$) associated
  in~\cite{Kho-kh-tangles,Jac-kh-cobordisms} to the cup (\respectively
  cap) cobordism.
\end{lemma}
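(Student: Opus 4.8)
The plan is to reduce the statement to the elementary behavior of reduced cohomology under wedge sums, and to match the cochain-level picture with the chain-level definitions of $F_\cup$ and $F_\cap$ recalled above via the canonical cell--generator correspondence. First I would recall that, for any allowable auxiliary data, the non-basepoint cells of $\Realize{\KhFlowCat(D)}$ are canonically identified with the Khovanov generators of $\KhCx(D)$ in such a way that the cellular coboundary becomes the Khovanov differential (up to the overall grading shift coming from the formal desuspension). Since adjoining a disjoint crossingless unknot $U$ changes neither the number of positive nor the number of negative crossings, the desuspension shifts and the quantum grading conventions for $L'$ restrict, on each summand, to those of $L$; in particular the maps $\Phi_\cup,\Phi_\cap$ of CW complexes induce maps of spectra in the bidegrees asserted for $F_\cup,F_\cap$.

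Next I would check that the full subcategories $\KhFlowCat(L)_\pm\subset\KhFlowCat(L')$, whose objects are the decorated resolutions with $U_v$ labelled $x_\pm$, are \emph{canonically} isomorphic to $\KhFlowCat(L)$ as framed flow categories, not merely as posets: the label on the component $U_v$ is constant along every flow line (no term of the Khovanov differential changes it), so all moduli spaces and framings of $\KhFlowCat(L')$ between two objects of $\KhFlowCat(L)_\pm$ literally coincide with the corresponding data of $\KhFlowCat(L)$, once the embedding and framing data for $L'$ are chosen compatibly with those for $L$ (as one may). Realization sends the disjoint union $\KhFlowCat(L')=\KhFlowCat(L)_+\amalg\KhFlowCat(L)_-$ to the wedge $\Realize{\KhFlowCat(L)_+}\vee\Realize{\KhFlowCat(L)_-}$, and sends the canonical isomorphisms $\KhFlowCat(L)_\pm\cong\KhFlowCat(L)$ to CW isomorphisms $\Realize{\KhFlowCat(L)_\pm}\cong\Realize{\KhFlowCat(L)}$ whose induced maps on cellular cochain complexes are exactly the canonical isomorphisms $\KhCx(L)_\pm\cong\KhCx(L)$. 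Thus the identification $\wt{H}^*(\Realize{\KhFlowCat(L')})\cong\Kh(L')$ carries the wedge splitting to the direct sum splitting $\Kh(L')=\Kh(L)_+\oplus\Kh(L)_-$.

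With these identifications in place the lemma is immediate. The map $\Phi_\cup$ is the collapse of the wedge onto the $\Realize{\KhFlowCat(L)_+}$-factor followed by the canonical identification with $\Realize{\KhFlowCat(L)}$, so $\Phi_\cup^*$ is the inclusion of $\wt{H}^*(\Realize{\KhFlowCat(L)})\cong\Kh(L)$ as the $+$ wedge summand of $\wt{H}^*(\Realize{\KhFlowCat(L')})\cong\Kh(L')$; under the identifications this is $\Kh(L)\cong\Kh(L)_+\hookrightarrow\Kh(L')$, which is precisely $F_\cup$ as defined on the chain level above (see~\cite[Figure 15]{Jac-kh-cobordisms}). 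Dually, $\Phi_\cap$ is the canonical identification of $\Realize{\KhFlowCat(L)}$ with the $-$ wedge summand followed by its inclusion, so $\Phi_\cap^*$ is the projection of $\Kh(L')=\Kh(L)_+\oplus\Kh(L)_-$ onto $\Kh(L)_-\cong\Kh(L)$, which is $F_\cap$. If one is uneasy about the sign or normalization conventions of~\cite{Kho-kh-tangles,Jac-kh-cobordisms}, one can instead conclude via the tangle-invariant uniqueness used in \Lemma{maps-from-tangles} and \Proposition{Reid-agree}; but here a direct chain-level comparison already gives equality on the nose.

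The one step that requires genuine verification rather than citation is the middle paragraph: that the bijection $\KhFlowCat(L)_\pm\cong\KhFlowCat(L)$ upgrades to an isomorphism of \emph{framed flow categories}, and correspondingly that the wedge summand inclusions/collapses of $\Realize{\KhFlowCat(L')}$ match the inclusion/projection of $\KhCx(L)_\pm$ on cochains. This is exactly where the hypothesis that $U$ is crossingless and disjoint is used — it makes the $U_v$-label a locally constant invariant along flow lines — and where one must note that the construction of $\Realize{\cdot}$ in~\cite{RS-khovanov} is natural enough to respect this splitting. None of it is hard, but it is the only place where something must actually be checked.
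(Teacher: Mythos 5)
Your argument is correct and is exactly the unpacking of what the paper dismisses as ``immediate from the definitions'': the splitting $\KhFlowCat(L')=\KhFlowCat(L)_+\amalg\KhFlowCat(L)_-$, the fact that realization turns this into a wedge, and the cell--generator correspondence together identify $\Phi_\cup^*$ and $\Phi_\cap^*$ with the chain-level inclusion and projection defining $F_\cup$ and $F_\cap$. You have merely made explicit the one point worth checking (that the $U_v$-label is preserved along flow lines, so the subcategories are canonically isomorphic to $\KhFlowCat(L)$ as framed flow categories), which the paper already builds into its setup before stating the lemma.
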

\begin{proof}
  This is immediate from the definitions.
\end{proof}

\subsubsection{Saddles}
The remaining elementary cobordism is a \emph{saddle}, which
corresponds to making the local change shown
\Figure{saddle}. The map $F_s$ on Khovanov homology associated
to a saddle is defined as follows. Let $L_0$ and $L_1$ be the link
diagrams before and after the saddle. Let $L$ be the link diagram
obtained by replacing the region in which the saddle move is occurring
by a crossing $c$, as in \Figure{saddle}. Then $\KhCx(L_1)$ is a
subcomplex of $\KhCx(L)$ with corresponding quotient complex
$\KhCx(L_0)$. The map $F_s$ is defined to be the connecting
homomorphism in the long exact sequence
\[
\dots\to \KhCx^{i-1,j-1}(L_1)\to \KhCx^{i+a,j+b}(L)\to
\KhCx^{i,j}(L_0)\stackrel{F_s}{\longrightarrow} \KhCx^{i,j-1}(L_1)\to\dots,
\]
where $a$ and $b$ are integers which depend on how the orientation for
$L$ is chosen.
Equivalently, $F_s$ is the map occurring in the skein exact sequence
associated to the crossing $c$; see, for instance,~\cite[p.\
1472]{Bar-kh-tangle-cob}. (Here, the gradings on $\KhCx(L_0)$ and
$\KhCx(L_1)$ relate nicely because the saddle is oriented; and the
grading on $\KhCx(L)$ does not relate as nicely to these, because the
orientations of $L_0$ and $L_1$ do not agree with an orientation
of $L$.)

\begin{figure}
  \centering
  \begin{overpic}[tics=5, scale=.75]{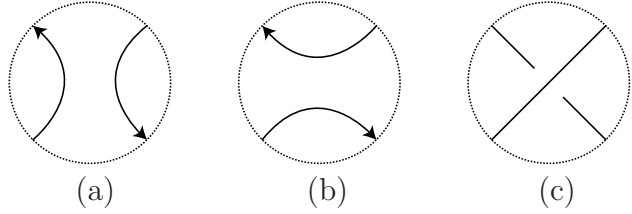}
    \put(13,0){(a)}
    \put(48,0){(b)}
    \put(83,0){(c)}
  \end{overpic}
  \caption{\textbf{A saddle.} Parts (a) and (b) are related by an
    oriented saddle move. (There is another valid oriented saddle,
    obtained by reflecting the pictures vertically.) Notice that (a) is
    the $0$-resolution of the crossing shown in (c), while (b) is the
    $1$-resolution of the crossing shown in (c). There is no way to
    orient the crossing in (c) coherently with (a) or (b).}
  \label{fig:saddle}
\end{figure}

Essentially the same construction carries through on the space
level. Briefly, a space-level version of the skein sequence is given
in~\cite[Section~\ref*{KhSp:sec:skein}]{RS-khovanov}, and we define
$\Phi_s$ to be the Puppe map associated to this sequence. In more detail,
the flow category $\KhFlowCat(L_0)$ is a downward-closed subcategory
of $\KhFlowCat(L)$ (in the sense
of~\cite[Definition~\ref*{KhSp:def:downward-closed}]{RS-khovanov}),
with corresponding upward-closed subcategory $\KhFlowCat(L_1)$. So,
there is a cofibration sequence
\[
\Realize{\KhFlowCat(L_0)}\to \Realize{\KhFlowCat(L)} \to \Realize{\KhFlowCat(L_1)}.
\]
The Puppe construction gives a map
\[
\Phi_s\co \Realize{\KhFlowCat(L_1)}\to \Sigma\Realize{\KhFlowCat(L_0)},
\]
and we define this to be the map of spaces associated to a saddle
cobordism.
Putting in the gradings, the map $\Phi_s$ has the form
\[
\Phi_s\co \KhSpace^{j}(L_1)\to \KhSpace^{j+1}(L_0).
\]

\begin{lemma}\label{lem:saddle-agree}
  The map on cohomology induced by $\Phi_s$ is the cobordism maps $F_s$
  associated in~\cite{Kho-kh-tangles,Jac-kh-cobordisms} to the saddle
  cobordism.
\end{lemma}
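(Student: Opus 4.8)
The plan is to reduce the statement to an identification of two connecting homomorphisms — one arising topologically, from the Puppe sequence of a cofibration of realizations of flow categories, and one arising algebraically, from the short exact sequence of Khovanov chain complexes defining $F_s$ — and to observe that both connecting homomorphisms are computed by the same snake‑lemma recipe once we match up the underlying short exact sequences of cochain complexes. First I would recall, from the review in \Subsection{review} and from \cite{RS-khovanov}, that for a downward‑closed subcategory $\KhFlowCat(L_0)\sbs\KhFlowCat(L)$ with complementary upward‑closed subcategory $\KhFlowCat(L_1)$, the cofibration sequence
\[
\Realize{\KhFlowCat(L_0)}\to \Realize{\KhFlowCat(L)}\to \Realize{\KhFlowCat(L_1)}
\]
realizes, on reduced cellular cochains, exactly the short exact sequence
\[
0\to \KhCx(L_1)\to \KhCx(L)\to \KhCx(L_0)\to 0
\]
(up to the overall grading shift built into the realization), because the cells of each realization are canonically identified with the Khovanov generators of the corresponding diagram, compatibly with inclusions of subcategories, and the cellular cochain differential is the Khovanov differential. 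This is the content that makes the whole construction work and is already invoked in the proof of \Proposition{signs} for the sign‑change cofibration; the saddle case is formally identical, with $L_0$, $L$, $L_1$ in place of the two resolutions of the auxiliary unknot crossing.

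Next I would invoke the standard homological‑algebra fact that the map induced on cohomology by the Puppe map of a cofibration $A\to X\to X/A$ is (up to sign) the connecting homomorphism $\partial\co \wt H^*(A)\to\wt H^{*+1}(X/A)$ of the associated long exact sequence, and that, when $A\to X\to X/A$ realizes a short exact sequence of cochain complexes, this $\partial$ is the algebraic connecting homomorphism of that short exact sequence. Applying this with the identification from the first step, $\Phi_s^*$ is identified with the connecting homomorphism of $0\to \KhCx(L_1)\to \KhCx(L)\to \KhCx(L_0)\to 0$ — but shifted so that it has the form $\wt H^i(\KhSpace^j(L_1))\to\wt H^i(\KhSpace^{j+1}(L_0))$, i.e. $\KhSpace^{j}(L_1)\to\KhSpace^{j+1}(L_0)$ after the de‑suspension, matching the stated bidegree. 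On the other hand, by the definition recalled just before the lemma, $F_s$ is by definition the connecting homomorphism of precisely this short exact sequence (equivalently, the map in the skein exact sequence for the crossing $c$, cf.\ \cite[p.\ 1472]{Bar-kh-tangle-cob}). Hence $\Phi_s^*=\pm F_s$.

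The main obstacle — and the only real content — is the bookkeeping of step one: checking that the cofibration sequence of realizations genuinely induces the correct short exact sequence of \emph{graded} cochain complexes, with the cells‑to‑generators dictionary respecting the subcomplex/quotient‑complex structure coming from the crossing $c$ (so that $\KhCx(L_1)$ lands as the subcomplex and $\KhCx(L_0)$ as the quotient, not the other way around) and with the quantum grading shifts of the skein triangle handled correctly. This amounts to tracing through \cite[Section~\ref*{KhSp:sec:skein}]{RS-khovanov}, where the space‑level skein sequence is constructed, and observing that its cochain‑level incarnation is exactly the skein short exact sequence; once this is in place, the identification of connecting maps and the matching of bidegrees are routine. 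The sign ambiguity is unavoidable and harmless, consistent with the "commutes up to sign" convention used throughout; see also \Remark{sign-ambiguity}. I would therefore write the proof as: (i) recall that $\Realize{\KhFlowCat(L_0)}\sbs\Realize{\KhFlowCat(L)}$ realizes, on cochains, $\KhCx(L_1)\into\KhCx(L)$ with quotient $\KhCx(L_0)$, with the stated grading conventions, citing \cite[Section~\ref*{KhSp:sec:skein}]{RS-khovanov}; (ii) recall that the cohomology map induced by a Puppe map is the algebraic connecting homomorphism; (iii) conclude, since $F_s$ is defined as that same connecting homomorphism.
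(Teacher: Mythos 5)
Your proposal is correct and matches the paper's argument: the paper simply declares the lemma ``immediate from the definitions,'' since both $F_s$ and $\Phi_s^*$ are by construction the connecting homomorphism of the same short exact sequence $0\to \KhCx(L_1)\to\KhCx(L)\to\KhCx(L_0)\to 0$, realized topologically as the Puppe map of the cofibration of realizations. You have spelled out the reasoning that the paper leaves implicit, but the route is the same.
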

\begin{proof}
  Again, this is immediate from the definitions.
\end{proof}

\begin{remark}\label{rem:sign-ambiguity}
  The reader might wonder where the sign ambiguity in the maps on
  Khovanov homology appears. Of course, we have not shown that the map
  of Khovanov spectra associated to a link cobordism is independent of
  the decomposition into elementary cobordisms, so the question is
  somewhat premature. But one possibility is that the ambiguity comes
  from the ambiguity in identifying $C^*(\KhSpace(L))$ with
  $\KhCx(L)$: to make this identification one must orient the cells in
  $C^*(\KhSpace(L))$, and that choice of orientation may be
  unnatural. If so, the map of Khovanov homotopy types could be
  completely well-defined, not just up to sign.
\end{remark}

\subsection{Completion of the proof of \texorpdfstring{\Theorem{coho-op-commute}}{Theorem}}\label{subsec:prove-coho-op-commute}

\begin{proof}[Proof of \Theorem{coho-op-commute}]
  Fix a diagram $D_i$ for $L_i$. We can view $S$ as given by a
  sequence $S_n\circ S_{n-1}\circ\dots\circ S_1$ of Reidemeister and
  Morse moves connecting $D_1$ to $D_2$. The map $F_S$ is the
  composition $F_{S_n}\circ \dots \circ F_{S_1}$, so it suffices
  to prove that Diagram~\eqref{eq:coho-op-commute} commutes (up to sign)
  for a single $S_i$.  If $S_i$ is a Reidemeister move then by
  Propositions~\ref{prop:extra-choices},~\ref{prop:signs}
  and~\ref{prop:Reid-agree} the
  map $\Phi_{S_i}$ of Khovanov homotopy types given
  by~\cite[Theorem~\ref*{KhSp:thm:kh-space}]{RS-khovanov} induces the
  map $\pm F_{S_i}$ on homology. Thus, by naturality of $\alpha$,
  Diagram~\eqref{eq:coho-op-commute} commutes up to sign. If $S_i$ is a Morse
  cobordism then by \Lemma{cup-cap-agree} or \Lemma{saddle-agree}
  there is a map $\Phi_{S_i}$ of Khovanov homotopy types inducing the map
  $F_{S_i}$ on cohomology. So, again, naturality of $\alpha$
  implies that Diagram~\eqref{eq:coho-op-commute} commutes up to sign. This
  completes the proof.
\end{proof}

\begin{proof}[Proof of \ThmCor{steenrod-alg}]
  This is immediate from \Theorem{coho-op-commute}.
\end{proof}

\section{New s-invariants}\label{sec:new-s}

Fix some field $\Field$ and let
$\al\from\wt{H}^*(\cdot;\Field)\to\wt{H}^{*+n}(\cdot;\Field)$ be a
stable cohomology operation for some $n>0$. Recall
from \Definition{full} that we defined an odd integer $q$ to be
\emph{$\alpha$-half-full} if there is a configuration of the form
\[
\xymatrix{
\langle\wt{a}\rangle\ar[r]\ar@{^(->}[d]&\langle\wh{a}\rangle\ar@{<-}[r]\ar@{^(->}[d]&\langle
a\rangle\ar[r]\ar@{^(->}[d]&\langle\ol{a}\rangle\neq 0\ar@<-2ex>@{^(->}[d]\\
\Kh^{-n,q}(K;\Field)\ar[r]^-{\alpha}&\Kh^{0,q}(K;\Field)\ar@{<-}[r]& H_0(\Filt_q;\Field)\ar[r]& H_0(\BNcx;\Field). 
}               
\]
and \emph{$\alpha$-full} if there is a configuration of the form:
\[
\xymatrix{
\langle\wt{a},\wt{b}\rangle\ar[r]\ar@{^(->}[d]&\langle\wh{a},\wh{b}\rangle\ar@{<-}[r]\ar@{^(->}[d]&\langle
a,b\rangle\ar[r]\ar@{^(->}[d]&\langle\ol{a},\ol{b}\ar@{=}[d]\rangle\\
\Kh^{-n,q}(K;\Field)\ar[r]^-{\alpha}&\Kh^{0,q}(K;\Field)\ar@{<-}[r]& H_0(\Filt_q;\Field)\ar[r]& H_0(\BNcx;\Field). 
}               
\]

Clearly, if $q$ is $\al$-full, then it is
$\al$-half-full. Furthermore, it is easy to see that if $q$ if
$\al$-full (\respectively $\al$-half-full), then $q-2$ is $\al$-full
(\respectively $\al$-half-full). So, 
we defined the following knot invariants in \Definition{rpm-spm}:
\begin{align*}
  r_+^\alpha(K)&=\max\set{q\in 2\Z+1}{q\text{ is
      $\alpha$-half-full}}+1 &   r_-^\alpha(K)&=-r_+^\alpha(\ol{K})\\
  s_+^\alpha(K)&=\max\set{q\in 2\Z+1}{q\text{ is
      $\alpha$-full}}+3 &   s_-^\alpha(K)&=-s_+^\alpha(\ol{K}),
\end{align*}
where $\ol{K}$ denotes the mirror of $K$.

Let us study a few properties of these invariants before we delve into
the proof of \Theorem{slice-bound}.

\begin{lem}\label{lem:unknot-s}
 If $U$ denotes the unknot, then $s^{\al}_{\pm}(U)=r^{\al}_{\pm}(U)=0$.
\end{lem}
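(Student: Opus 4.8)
The plan is to read off both invariants directly from the explicit structure of the Bar--Natan complex of the unknot, exploiting the only special feature of $U$ that matters here: its Khovanov homology is concentrated in homological degree $0$, so $\Kh^{-n,q}(U;\Field)=\wt{H}^{-n}(\KhSpace^q(U);\Field)=0$ for every $n>0$.

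First I would record the relevant data about $\BNcx(U;\Field)$. It is $\Field\langle x_+,x_-\rangle$ with zero differential, $x_\pm$ in homological grading $0$ and quantum grading $\pm 1$; hence $\Filt_q\BNcx(U)$ equals all of $\BNcx(U)$ when $q\le -1$, equals $\Field\langle x_+\rangle$ when $q=1$, and is $0$ when $q\ge 3$. Consequently the inclusion-induced map $i_*\co H_0(\Filt_q\BNcx(U))\to H_0(\BNcx(U))\cong\Field^2$ is an isomorphism for $q\le -1$, has one-dimensional image for $q=1$, and vanishes for $q\ge 3$ (compatibly with the trivial fact $s^\Field(U)=0$, i.e.\ $s^\Field_{\min}(U)=-1$, $s^\Field_{\max}(U)=1$).

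Next I would reduce the fullness conditions of \Definition{full}, specialized to $U$, to a single statement about $i_*$. Since $\Kh^{-n,q}(U;\Field)=0$, the classes $\wt{a}$ (and $\wt{b}$) are forced to be $0$, hence so are $\wh{a}=\alpha(\wt{a})$ (and $\wh{b}$). Condition~(2) then says $a$ (and $b$) lie in the kernel of $H_0(\Filt_q)\to H_0(\Filt_q/\Filt_{q+2})=\Kh^{0,q}(U;\Field)$, which by the long exact sequence of the pair $(\Filt_q,\Filt_{q+2})$ is exactly the image of $H_0(\Filt_{q+2})\to H_0(\Filt_q)$. So conditions~(3)--(4) amount to: $\ol{a}$ is a nonzero element (\respectively $\ol{a},\ol{b}$ form a basis) of the image of $i_*\co H_0(\Filt_{q+2}\BNcx(U))\to H_0(\BNcx(U))$; and conversely, lifting any such element(s) to $\Filt_{q+2}\subset\Filt_q$ and taking $\wt{a}=\wt{b}=\wh{a}=\wh{b}=0$ produces the required configuration. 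Therefore $q$ is $\alpha$-half-full for $U$ precisely when $i_*\co H_0(\Filt_{q+2}\BNcx(U))\to H_0(\BNcx(U))$ is nonzero, i.e.\ when $q+2\le 1$, i.e.\ when $q\le -1$; and $q$ is $\alpha$-full precisely when that map is surjective, i.e.\ when $q+2\le -1$, i.e.\ when $q\le -3$. (Both sets of odd integers are nonempty.) Hence $r_+^\alpha(U)=-1+1=0$ and $s_+^\alpha(U)=-3+3=0$, and since $\ol{U}=U$, \Definition{rpm-spm} gives $r_-^\alpha(U)=-r_+^\alpha(U)=0$ and $s_-^\alpha(U)=-s_+^\alpha(U)=0$.

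There is no genuine obstacle here; the one thing to get right is the bookkeeping in the middle step, where the vanishing of $\Kh^{-n,q}(U)$ forces $a$ up by one filtration level, so that the filtration shift by $2$ is exactly absorbed by the ``$+1$'' and ``$+3$'' appearing in \Definition{rpm-spm}, converting the thresholds $1$ and $-1$ into the value $0$.
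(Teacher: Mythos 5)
Your proof is correct and takes essentially the same approach as the paper, which simply observes that the claim is immediate from the zero-crossing diagram of the unknot; you have spelled out the bookkeeping (vanishing of $\Kh^{-n,q}(U)$ forcing $\wt{a}=\wh{a}=0$, the exact sequence pushing $a$ into the image of $H_0(\Filt_{q+2})$, and the resulting thresholds $q\le-1$ and $q\le-3$) that the paper leaves implicit.
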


\begin{proof}
  This is immediate if one considers the zero-crossing diagram of the
  unknot.
\end{proof}

\begin{lem}\label{lem:rs-increase-by-2}
  For any knot $K$,
  $r^{\al}_+(K),s^{\al}_+(K)\in\{s^{\Field}(K),s^{\Field}(K)+2\}$;
  therefore, $r^{\al}_+(K)=s^{\Field}(K)$ if $s^{\Field}(K)+1$ is not
  $\al$-half-full and $r^{\al}_+(K)=s^{\Field}(K)+2$ otherwise; and
  $s^{\al}_+(K)=s^{\Field}(K)$ if $s^{\Field}(K)-1$ is not $\al$-full
  and $s^{\al}_+(K)=s^{\Field}(K)+2$ otherwise.
\end{lem}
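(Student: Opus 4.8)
The plan is to rephrase both notions entirely in terms of $s^\Field_{\min}(K)$ and $s^\Field_{\max}(K)$, exploiting the observation that condition~(1) of \Definition{full} costs nothing whenever one may take $\wh a$ (resp.\ $\wh a,\wh b$) to be $0$, since $\al(0)=0$. Unwinding the definitions, $q$ is $\al$-half-full iff there is $a\in H_0(\Filt_q)$ with nonzero image in $H_0(\BNcx)$ whose image $\wh a$ in $\Kh^{0,q}=H_0(\Filt_q/\Filt_{q+2})$ lies in $\image(\al)$; and $q$ is $\al$-full iff there are $a,b\in H_0(\Filt_q)$ with $i_*(a),i_*(b)$ a basis of $H_0(\BNcx)$ and $\wh a,\wh b\in\image(\al)$. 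Since $\wh a=0$ exactly when $a$ lifts to $H_0(\Filt_{q+2})$, the $\al$-conditions can always be arranged for free by working with classes pulled back from one filtration level higher.

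First I would dispose of the upper bounds. If $q$ is $\al$-half-full then $i_*\colon H_0(\Filt_q)\to H_0(\BNcx)$ is nonzero, so $q\leq s^\Field_{\max}(K)$; if $q$ is $\al$-full then $i_*$ is surjective, so $q\leq s^\Field_{\min}(K)$. Hence $r^\al_+(K)\leq s^\Field_{\max}(K)+1=s^\Field(K)+2$ and $s^\al_+(K)\leq s^\Field_{\min}(K)+3=s^\Field(K)+2$. For the lower bounds I would invoke \Proposition{differ-by-two}, i.e.\ $s^\Field_{\max}(K)=s^\Field_{\min}(K)+2$: picking $c\in H_0(\Filt_{s^\Field_{\max}})$ with $i_*(c)\neq 0$ and viewing it in $H_0(\Filt_{s^\Field_{\min}})$ via $\Filt_{s^\Field_{\min}+2}\subset\Filt_{s^\Field_{\min}}$, its image in $\Kh^{0,s^\Field_{\min}}$ vanishes while its image in $H_0(\BNcx)$ is still nonzero, so (taking $\wt a=\wh a=0$) $s^\Field_{\min}(K)$ is $\al$-half-full; and since $i_*\colon H_0(\Filt_{s^\Field_{\min}})\to H_0(\BNcx)\cong\Field^2$ is surjective, pulling a basis back to $H_0(\Filt_{s^\Field_{\min}-2})$ (again with $\wt a=\wt b=0$) shows $s^\Field_{\min}(K)-2$ is $\al$-full. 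Thus $r^\al_+(K)\geq s^\Field(K)$ and $s^\al_+(K)\geq s^\Field(K)$.

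Combining the two sides, each relevant maximum is caught between two consecutive odd integers, which gives $r^\al_+(K),s^\al_+(K)\in\{s^\Field(K),s^\Field(K)+2\}$. The refined dichotomy then follows by inspection using $s^\Field(K)+1=s^\Field_{\max}(K)$ and $s^\Field(K)-1=s^\Field_{\min}(K)$: if $s^\Field_{\max}(K)$ is $\al$-half-full then $r^\al_+(K)\geq s^\Field_{\max}(K)+1=s^\Field(K)+2$, and otherwise the largest $\al$-half-full odd integer is $s^\Field_{\min}(K)$, giving $r^\al_+(K)=s^\Field(K)$; symmetrically $s^\al_+(K)$ is governed by whether $s^\Field_{\min}(K)$ is $\al$-full. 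That these maxima are attained uses only the monotonicity $q\mapsto q-2$ already noted before the lemma.

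I do not expect a serious obstacle: the argument is pure bookkeeping once one notices that the cohomology operation plays no role in these extremal configurations—one can always take $\wt a=\wh a=0$—so the only genuine input is \Proposition{differ-by-two}. The one point needing care is the naturality check that the class $c\in H_0(\Filt_{s^\Field_{\max}})$, reinterpreted at filtration level $s^\Field_{\min}$, still maps nontrivially to $H_0(\BNcx)$ and trivially to the associated-graded piece $\Kh^{0,s^\Field_{\min}}$.
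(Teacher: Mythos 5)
Your proposal is correct and follows essentially the same route as the paper: the upper bounds come from observing that fullness (resp.\ half-fullness) forces $i_*\colon H_0(\Filt_q)\to H_0(\BNcx)$ to be surjective (resp.\ nonzero), and the lower bounds come from pushing classes forward from one filtration level higher so that their images in the associated graded vanish and the cohomology-operation condition is satisfied trivially with $\wt a=\wh a=0$. The only substantive input in both arguments is \Proposition{differ-by-two}, and your handling of the final dichotomy via downward monotonicity of (half-)fullness matches what the paper leaves implicit.
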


\begin{proof}
If $q$ is $\al$-full, then there is a configuration of the form
\[
\xymatrix{
\langle\wt{a},\wt{b}\rangle\ar[r]\ar@{^(->}[d]&\langle\wh{a},\wh{b}\rangle\ar@{<-}[r]\ar@{^(->}[d]&\langle
a,b\rangle\ar[r]\ar@{^(->}[d]&\langle\ol{a},\ol{b}\rangle\ar@{=}[d]\\
\Kh^{-n,q}(K;\Field)\ar[r]^-{\alpha}&\Kh^{0,q}(K;\Field)\ar@{<-}[r]& H_0(\Filt_q;\Field)\ar[r]& H_0(\BNcx;\Field), 
}               
\]
and hence the map $H_0(\Filt_q;\Field)\to H_0(\BNcx;\Field)$ is surjective. Therefore,
$q\leq s^{\Field}_{\min}(K)=s^{\Field}(K)-1$. This implies
\[
s^{\alpha}_+(K)=\max\set{q\in 2\Z+1}{q\text{ is
      $\alpha$--full}}+3\leq s^{\Field}(K)+2.
\]

For the other direction, we need to show that $s^{\Field}(K)-3$ is
$\al$-full. Let $q=s^{\Field}-3$. Choose $a',b'\in
H_0(\Filt_{q+2};\Field)$ so that $a',b'$ maps to some basis
$\ol{a},\ol{b}\in H_0(\BNcx;\Field)$. Let $a,b\in H_0(\Filt_{q};\Field)$ be
the images of $a',b'$ under the map $H_0(\Filt_{q+2};\Field)\to
H_0(\Filt_{q};\Field)$. The exact sequence
\[
H_0(\Filt_{q+2};\Field)\to H_0(\Filt_{q};\Field)\to \Kh^{0,q}(K;\Field)
\]
implies that $a,b$ maps to $0$ in $\Kh^{0,q}(K;\Field)$. Therefore,
there is the following configuration
\[
\xymatrix{
\langle 0\rangle\ar[r]\ar@{^(->}[d]&\langle 0\rangle\ar@{<-}[r]\ar@{^(->}[d]&\langle
a,b\rangle\ar[r]\ar@{^(->}[d]&\langle\ol{a},\ol{b}\rangle\ar@{=}[d]\\
\Kh^{-n,q}(K;\Field)\ar[r]^-{\alpha}&\Kh^{0,q}(K;\Field)\ar@{<-}[r]& H_0(\Filt_{q};\Field)\ar[r]& H_0(\BNcx;\Field) 
}               
\]
and hence $q$ is $\al$-full. Therefore, $s^{\alpha}_+(K)\geq
q+3=s^{\Field}(K)$.

The argument for $r^{\al}_+(K)$ is similar.
\end{proof}

\begin{cor}\label{cor:alpha-zero}
  For any knot $K$, if $\al(\Kh^{-n,s^{\Field}(K)+1}(K;\Field))=0$, then
  $r_+^{\al}(K)=s^{\Field}(K)$; and if
  $\al(\Kh^{-n,s^{\Field}(K)-1}(K;\Field))=0$, then $s_+^{\al}(K)=s^{\Field}(K)$.
\end{cor}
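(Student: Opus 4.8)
The plan is to read the two implications directly off the characterization provided by \Lemma{rs-increase-by-2}, together with the definitions of $\al$-half-full and $\al$-full. First I would recall from that lemma that $r_+^\al(K)\in\{s^\Field(K),s^\Field(K)+2\}$, with $r_+^\al(K)=s^\Field(K)+2$ precisely when the odd integer $q=s^\Field(K)+1$ is $\al$-half-full; and similarly $s_+^\al(K)=s^\Field(K)+2$ precisely when $q=s^\Field(K)-1$ is $\al$-full. So in each case it suffices to show that the relevant $q$ fails to be half-full (respectively full) under the stated vanishing hypothesis on $\al$.

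For the first implication, suppose $\al\bigl(\Kh^{-n,s^\Field(K)+1}(K;\Field)\bigr)=0$ and set $q=s^\Field(K)+1$. If $q$ were $\al$-half-full, then by \Definition{full} there would exist $a\in H_0(\Filt_q;\Field)$ whose image $\ol a\in H_0(\BNcx;\Field)$ is a nonzero generator, and whose image $\wh a\in\Kh^{0,q}(K;\Field)$ equals $\al(\wt a)$ for some $\wt a\in\Kh^{-n,q}(K;\Field)$. By hypothesis $\al(\wt a)=0$, so $\wh a=0$; that is, $a$ lies in the kernel of $H_0(\Filt_q;\Field)\to H_0(\Filt_q/\Filt_{q+2};\Field)=\Kh^{0,q}(K;\Field)$. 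By the long exact sequence of the pair $(\Filt_q,\Filt_{q+2})$, $a$ is then the image of some $a'\in H_0(\Filt_{q+2};\Field)$, and hence $\ol a$ lies in the image of $H_0(\Filt_{q+2};\Field)\to H_0(\BNcx;\Field)$. But $\ol a$ is a generator of $H_0(\BNcx;\Field)\cong\Field\oplus\Field$, so this image contains a generator, forcing $s^\Field_{\min}(K)\geq q+2=s^\Field(K)+1$, contradicting $s^\Field_{\min}(K)=s^\Field(K)-1$. Therefore $q$ is not $\al$-half-full, and by \Lemma{rs-increase-by-2}, $r_+^\al(K)=s^\Field(K)$.

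The second implication is the same argument applied with a basis in place of a single generator. Suppose $\al\bigl(\Kh^{-n,s^\Field(K)-1}(K;\Field)\bigr)=0$ and set $q=s^\Field(K)-1=s^\Field_{\min}(K)$. If $q$ were $\al$-full, there would be $a,b\in H_0(\Filt_q;\Field)$ mapping to a basis $\ol a,\ol b$ of $H_0(\BNcx;\Field)$, with images $\wh a,\wh b\in\Kh^{0,q}(K;\Field)$ in the image of $\al$; by hypothesis $\wh a=\wh b=0$, so both $a$ and $b$ lift to $H_0(\Filt_{q+2};\Field)$ as before, whence $H_0(\Filt_{q+2};\Field)\to H_0(\BNcx;\Field)$ hits a basis and is surjective. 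This gives $s^\Field_{\min}(K)\geq q+2=s^\Field(K)+1$, again contradicting $s^\Field_{\min}(K)=s^\Field(K)-1$. So $q$ is not $\al$-full, and \Lemma{rs-increase-by-2} yields $s_+^\al(K)=s^\Field(K)$. The only mild subtlety—which I do not expect to cause real trouble—is making sure the exactness of $H_0(\Filt_{q+2};\Field)\to H_0(\Filt_q;\Field)\to \Kh^{0,q}(K;\Field)$ is invoked correctly (it is exactly the sequence already used in the proof of \Lemma{rs-increase-by-2}), and confirming that "$\al$ vanishes on $\Kh^{-n,q}$" is precisely what kills $\wh a$ in the half-full/full configuration; both are immediate from \Definition{full}.
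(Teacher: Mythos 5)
Your argument is the same as the paper's: pass to the contrapositive via \Lemma{rs-increase-by-2}, observe that the vanishing of $\al$ forces $\wh{a}$ (and $\wh{b}$) to be zero, lift $a$ (and $b$) to $H_0(\Filt_{q+2};\Field)$ through the exact sequence $H_0(\Filt_{q+2})\to H_0(\Filt_q)\to \Kh^{0,q}$, and contradict the definition of the $s$-invariant. Your treatment of the $\al$-full case is exactly the paper's proof. There is one slip in the half-full case, though: from the fact that the image of $H_0(\Filt_{q+2};\Field)\to H_0(\BNcx;\Field)$ contains the single nonzero element $\ol{a}$ you conclude $s^\Field_{\min}(K)\geq q+2$, but a nonzero image does not give surjectivity, so this inference is not valid. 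What you actually get is $s^\Field_{\max}(K)\geq q+2$; since $q=s^\Field(K)+1=s^\Field_{\max}(K)$ (by \Proposition{differ-by-two}), this is still a contradiction, so the argument repairs immediately --- but the invariant being contradicted is $s_{\max}$, not $s_{\min}$.
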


\begin{proof}
  Assume $\al(\Kh^{-n,s^{\Field}(K)-1}(K;\Field))=0$ and
  $s_+^{\al}(K)\neq s^{\Field}(K)$. Therefore by
  \Lemma{rs-increase-by-2}, $s^{\Field}(K)-1$ is $\al$-full. Let
  $q=s^{\Field}(K)-1$. Since the image of $\al$ is zero, there exists
  the following configuration
  \[
  \xymatrix{
    \langle\wt{a},\wt{b}\rangle\ar[r]\ar@{^(->}[d]&\langle 0\rangle\ar@{<-}[r]\ar@{^(->}[d]&\langle
    a,b\rangle\ar[r]\ar@{^(->}[d]&\langle\ol{a},\ol{b}\rangle\ar@{=}[d]\\
    \Kh^{-n,q}(K;\Field)\ar[r]^-{\alpha}&\Kh^{0,q}(K;\Field)\ar@{<-}[r]& H_0(\Filt_{q};\Field)\ar[r]& H_0(\BNcx;\Field). 
  }               
  \]
  The exact sequence
  \[
  H_0(\Filt_{q+2};\Field)\to H_0(\Filt_{q};\Field)\to \Kh^{0,q}(K;\Field)
  \]
  implies that $a,b$ are the images of some elements, say $a',b'\in
  H_0(\Filt_{q+2};\Field)$. Therefore, the map
  $H_0(\Filt_{q+2};\Field)\to H_0(\BNcx;\Field)$ is surjective, which
  contradicts with the statement that $s^{\Field}_{\min}(K)=q$.

  Once again, the argument for $r_+^{\al}(K)$ is similar.
\end{proof}

\begin{cor}
  For any knot $K$,
  $r_-^{\al}(K),s_-^{\al}(K)\in\{s^{\Field}(K),s^{\Field}(K)-2\}$;
  therefore, we have
  $\max\{|r_\pm^{\al}(K)|\},\max\{|s_\pm^{\al}(K)|\}
  \in\{|s^{\Field}(K)|,|s^{\Field}(K)|+2\}$.
\end{cor}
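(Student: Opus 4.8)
The plan is to deduce this corollary formally from \Lemma{rs-increase-by-2} together with the mirror identity $s^{\Field}(\ol{K}) = -s^{\Field}(K)$. The latter follows from the Bar-Natan analogue of~\cite[Proposition 3.10]{Ras-kh-slice} recalled in the proof of \Corollary{concord-homo}, namely $s_{\min}^{\Field}(\ol{K}) = -s_{\max}^{\Field}(K)$, combined with \Proposition{differ-by-two}: indeed $s^{\Field}(\ol{K}) = s_{\min}^{\Field}(\ol{K}) + 1 = -s_{\max}^{\Field}(K) + 1 = -s^{\Field}(K)$.

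First I would prove the membership statement. By \Definition{rpm-spm} we have $r_-^{\al}(K) = -r_+^{\al}(\ol{K})$ and $s_-^{\al}(K) = -s_+^{\al}(\ol{K})$. Applying \Lemma{rs-increase-by-2} to the knot $\ol{K}$ gives $r_+^{\al}(\ol{K}), s_+^{\al}(\ol{K}) \in \{s^{\Field}(\ol{K}), s^{\Field}(\ol{K}) + 2\}$, and by the mirror identity this set equals $\{-s^{\Field}(K), -s^{\Field}(K) + 2\}$. Negating, $r_-^{\al}(K), s_-^{\al}(K) \in \{s^{\Field}(K), s^{\Field}(K) - 2\}$, as claimed.

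Next I would deduce the statement about $\max\{|r_\pm^{\al}(K)|\}$ and $\max\{|s_\pm^{\al}(K)|\}$. Set $s = s^{\Field}(K)$, and note $s$ is even since $s_{\min}^{\Field} \in 2\Z+1$. The two-element set $\{|r_+^{\al}(K)|, |r_-^{\al}(K)|\}$ and the number $|s^{\Field}(K)|$ are both invariant under $K \mapsto \ol{K}$, using $r_+^{\al}(\ol{K}) = -r_-^{\al}(K)$ and $r_-^{\al}(\ol{K}) = -r_+^{\al}(K)$; so I may assume $s \geq 0$, hence $s = 0$ or $s \geq 2$. By \Lemma{rs-increase-by-2} and the first part, $r_+^{\al}(K) \in \{s, s+2\}$ and $r_-^{\al}(K) \in \{s, s-2\}$, so $|r_-^{\al}(K)| \leq \max\{s, 2\}$. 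If $r_+^{\al}(K) = s+2$ then $\max\{|r_+^{\al}(K)|, |r_-^{\al}(K)|\} = s+2 = |s|+2$; if $r_+^{\al}(K) = s$ then a direct check in the cases $s = 0$ and $s \geq 2$ shows the maximum is $|s|$ or $|s|+2$. Either way $\max\{|r_\pm^{\al}(K)|\} \in \{|s^{\Field}(K)|, |s^{\Field}(K)|+2\}$, and the argument for $s_\pm^{\al}$ is verbatim the same.

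There is no real obstacle here: the corollary is purely formal once \Lemma{rs-increase-by-2} and the mirror identity for $s^{\Field}$ are in hand. The only point requiring a moment's care is the absolute-value bookkeeping, which is organized cleanly by reducing to the case $s \geq 0$ via mirror symmetry.
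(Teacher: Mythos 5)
Your proof is correct and follows the same route as the paper, which simply declares the corollary immediate from \Lemma{rs-increase-by-2}, \Definition{rpm-spm}, and the mirror identity $s^{\Field}(\ol{K})=-s^{\Field}(K)$ from \Corollary{concord-homo}. You have merely written out the absolute-value bookkeeping that the paper leaves to the reader, and that bookkeeping is accurate.
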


\begin{proof}
  This is immediate from \Lemma{rs-increase-by-2}, the definition of
  $r_-^{\al}$ and $s_-^{\al}$ (\Definition{rpm-spm}) and the fact that
  $s^{\Field}(\ol{K})=-s^{\Field}(K)$ (\Corollary{concord-homo}).
\end{proof}

\begin{proof}[Proof of \Theorem{slice-bound}]
  The proof essentially follows the proof of
  \Corollary{s-bounds-genus}. Let $q=s_+^{\al}(K_1)-3$. Choose elements
  $\wt{a},\wt{b}\in\Kh^{-n,q}(K_1;\Field)$, $\wh{a},\wh{b}\in\Kh^{0,q}(K_1;\Field)$,
  $a,b\in H_0(\Filt_q\BNcx(K_1);\Field)$ and $\ol{a},\ol{b}\in H_0(\BNcx(K_1),\Field)$
  satisfying:
  \[
  \xymatrix{
    \langle\wt{a},\wt{b}\rangle\ar[r]\ar@{^(->}[d]&\langle\wh{a},\wh{b}\rangle\ar@{<-}[r]\ar@{^(->}[d]&\langle
    a,b\rangle\ar[r]\ar@{^(->}[d]&\langle\ol{a},\ol{b}\ar@{=}[d]\rangle\\
    \Kh^{-n,q}(K_1;\Field)\ar[r]^-{\alpha}&\Kh^{0,q}(K_1;\Field)\ar@{<-}[r]& H_0(\Filt_q\BNcx(K_1);\Field)\ar[r]& H_0(\BNcx(K_1);\Field). 
  }               
  \]

  By \Citethm{BN-filt-cob}, the cobordism map
  $F_S\from\BNcx(K_1)\to\BNcx(K_2)$ is a filtered map of filtration
  $\chi(S)=-2g$. By an abuse of notation, let $F_S$ also denote each of
  induced maps $H_0(\BNcx(K_1))\to H_0(\BNcx(K_2))$,
  $H_0(\Filt_q\BNcx(K_1))\to H_0(\Filt_{q-2g}\BNcx(K_2))$ and
  $\Kh^{i,q}(K_1)\to \Kh^{i,q-2g}(K_2)$.

  Since the induced map on the associated graded complex commutes with
  the cohomology operation $\al$ up to a sign (by
  \Theorem{coho-op-commute}), we can pushforward the above
  configuration to get the following configuration:
  \[
  \xymatrix{
    \langle F_S(\wt{a}),F_S(\wt{b})\rangle\ar[r]\ar@{^(->}[d]&\langle F_S(\wh{a}),F_S(\wh{b})\rangle\ar@{<-}[r]\ar@{^(->}[d]&\langle
    F_S(a),F_S(b)\rangle\ar[r]\ar@{^(->}[d]&\langle F_S(\ol{a}),F_S(\ol{b})\ar@{^(->}[d]\rangle\\
    \Kh^{-n,q-2g}(K_2;\Field)\ar[r]^-{\alpha}&\Kh^{0,q-2g}(K_2;\Field)\ar@{<-}[r]& H_0(\Filt_{q-2g}\BNcx(K_2);\Field)\ar[r]& H_0(\BNcx(K_2);\Field). 
  }               
  \]
  Finally, recall that since $\ol{a},\ol{b}$ is a basis for
  $H_0(\BNcx(K_1))$, by \Proposition{S-quasi-iso},
  $F_S(\ol{a}),F_S(\ol{b})$ is a basis for
  $H_0(\BNcx(K_2))$. Therefore, $q-2g$ is $\al$-full for $K_2$ and
  hence,
  \begin{align*}
  s_+^{\al}(K_2)&\geq q-2g+3=s_+^{\al}(K_1)-2g,\text{ or}\\
  2g&\geq s_+^{\al}(K_1)-s_+^{\al}(K_2).
  \end{align*}

  By treating $S$ as a cobordism from $K_2$ to $K_1$, we get $2g\geq
  s_+^{\al}(K_2)-s_+^{\al}(K_1)$, and by combining, we get our desired
  inequality
  \[
  |s_+^{\al}(K_1)-s_+^{\al}(K_2)|\leq 2g.
  \]

  One can take mirrors to get a connected genus $g$ cobordism from
  $\ol{K}_1$ to $\ol{K}_2$. Therefore, we get
  \[
  |s_-^{\al}(K_1)-s_-^{\al}(K_2)|=|s_+^{\al}(\ol{K}_1)-s_+^{\al}(\ol{K}_2)|\leq 2g.
  \]

  Finally, since for the unknot $U$, $s^{\al}_{\pm}(U)=0$ (from
  \Lemma{unknot-s}), we get the desired slice genus bounds. The story
  for $r_{\pm}^{\al}$ is similar.
\end{proof}

\section{Computations}\label{sec:computations}

The first cohomology operation that comes to mind reduces to
Rasmussen's $s$ invariant:
\begin{lemma}
  Suppose $\al$ is the zero map
  $\wt{H}^*(\cdot;\Field)\to\wt{H}^{*+n}(\cdot;\Field)$ for some
  $n>0$. Then $r_{\pm}^{\al}=s_{\pm}^{\al}=s^{\Field}$.
\end{lemma}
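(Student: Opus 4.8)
The plan is to read this off from \Corollary{alpha-zero}. Since $\al$ is the zero operation, both hypotheses of \Corollary{alpha-zero} hold vacuously for every knot $K$, namely $\al\bigl(\Kh^{-n,s^\Field(K)+1}(K;\Field)\bigr)=0$ and $\al\bigl(\Kh^{-n,s^\Field(K)-1}(K;\Field)\bigr)=0$. Hence \Corollary{alpha-zero} gives $r_+^\al(K)=s^\Field(K)$ and $s_+^\al(K)=s^\Field(K)$ directly.

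For the $-$ variants I would unwind \Definition{rpm-spm}, which sets $r_-^\al(K)=-r_+^\al(\ol K)$ and $s_-^\al(K)=-s_+^\al(\ol K)$, where $\ol K$ denotes the mirror of $K$. Applying the previous paragraph to $\ol K$ gives $r_+^\al(\ol K)=s_+^\al(\ol K)=s^\Field(\ol K)$, and then the identity $s^\Field(\ol K)=-s^\Field(K)$ furnished by \Corollary{concord-homo} yields $r_-^\al(K)=s_-^\al(K)=-s^\Field(\ol K)=s^\Field(K)$, finishing the proof.

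There is no real obstacle here; the content sits entirely in \Corollary{alpha-zero}. If one preferred a self-contained argument, one could instead argue straight from \Definition{full}: because $\wt a,\wh a$ (and $\wt b,\wh b$) are permitted to vanish, an odd integer $q$ is $\al$-half-full for the zero operation precisely when some class in $H_0(\Filt_q;\Field)$ mapping to $0$ in $\Kh^{0,q}(K;\Field)=H_0(\Filt_q/\Filt_{q+2};\Field)$ maps to a nonzero element of $H_0(\BNcx;\Field)$; by exactness of $H_0(\Filt_{q+2};\Field)\to H_0(\Filt_q;\Field)\to \Kh^{0,q}(K;\Field)$ this happens exactly when $H_0(\Filt_{q+2};\Field)\to H_0(\BNcx;\Field)$ is nonzero, i.e.\ when $q\le s_{\max}^{\Field}(K)-2=s^\Field(K)-1$ (using \Proposition{differ-by-two}). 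Likewise $q$ is $\al$-full for the zero operation exactly when $H_0(\Filt_{q+2};\Field)\to H_0(\BNcx;\Field)$ is surjective, i.e.\ when $q\le s_{\min}^{\Field}(K)-2=s^\Field(K)-3$. Since $s^\Field(K)$ is even, plugging these into \Definition{rpm-spm} gives $r_+^\al(K)=(s^\Field(K)-1)+1=s^\Field(K)$ and $s_+^\al(K)=(s^\Field(K)-3)+3=s^\Field(K)$ once more, and the $-$ variants follow exactly as above.
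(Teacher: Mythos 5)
Your proof is correct and its main route is exactly the paper's: the paper's entire proof is ``This is immediate from \Corollary{alpha-zero},'' and your first two paragraphs just spell out that deduction together with the mirror bookkeeping for the $-$ variants. The optional self-contained argument via exactness of $H_0(\Filt_{q+2};\Field)\to H_0(\Filt_q;\Field)\to \Kh^{0,q}(K;\Field)$ is also sound, but it is not needed.
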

\begin{proof}
  This is immediate from \Corollary{alpha-zero}.
\end{proof}

For most of the rest of the section we restrict our attention to the
cohomology operations $\Sq^2\from
\wt{H}^*(\cdot;\F_2)\to\wt{H}^{*+2}(\cdot;\F_2)$ and $\Sq^1\from
\wt{H}^*(\cdot;\F_2)\to\wt{H}^{*+1}(\cdot;\F_2)$. We start with $\Sq^2$.
\begin{lemma}\label{lem:thin-trivial}
  Let $K$ be a knot.
  \begin{enumerate} 
  \item If $\Kh(K;\F_2)$ is supported on two adjacent diagonals, then
    $r_\pm^{\Sq^2}(K) = s_\pm^{\Sq^2}(K) =s^{\F_2}(K)$.
  \item If $\Kh(K;\F_2)$ is supported on three adjacent diagonals,
    then $r_\pm^{\Sq^2}(K)=s^{\F_2}(K)$.
  \end{enumerate}
\end{lemma}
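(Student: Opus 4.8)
Both parts reduce to \Corollary{alpha-zero} by a diagonal--grading count. Write $\delta(x)=j-2i$ for the diagonal grading of a class $x\in\Kh^{i,j}$, so ``adjacent diagonals'' differ by $2$. For $\al=\Sq^2$ we have $n=2$, so the map relevant to \Corollary{alpha-zero} is $\Sq^2\co\Kh^{-2,q}(K;\F_2)\to\Kh^{0,q}(K;\F_2)$; its source sits in diagonal $q+4$ and its target in diagonal $q$. Thus for a \emph{fixed} odd $q$, the source and target of $\Sq^2$ lie in diagonals a distance $4$ apart. This immediately settles part~(1): if $\Kh(K;\F_2)$ is supported on two adjacent diagonals (a $\delta$-range of $2$), then for every odd $q$ at least one of $\Kh^{-2,q}(K;\F_2)$, $\Kh^{0,q}(K;\F_2)$ is zero, so $\Sq^2$ vanishes identically on $\Kh(K;\F_2)$. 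In particular $\Sq^2(\Kh^{-2,s^{\F_2}(K)\pm1}(K;\F_2))=0$, so \Corollary{alpha-zero} gives $r_+^{\Sq^2}(K)=s_+^{\Sq^2}(K)=s^{\F_2}(K)$, and the minus versions follow by applying this to $\ol K$ (still two-diagonal, with $s^{\F_2}(\ol K)=-s^{\F_2}(K)$ by \Corollary{concord-homo}) together with the definitions $r_-^{\Sq^2}(K)=-r_+^{\Sq^2}(\ol K)$, $s_-^{\Sq^2}(K)=-s_+^{\Sq^2}(\ol K)$ from \Definition{rpm-spm}.

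For part~(2) the support spans a $\delta$-range of $4$, exactly the distance between the source and target diagonals of $\Sq^2$, so $\Sq^2$ need not vanish; I would instead show that the single group named by \Corollary{alpha-zero} for $r_+^{\Sq^2}$, namely $\Kh^{-2,s^{\F_2}(K)+1}(K;\F_2)$, is forced to be zero. The key step is to locate the surviving classes of the Bar-Natan spectral sequence $\Kh^{*,*}(K;\F_2)\rightrightarrows\F_2^2$. From the rank behaviour of $i_*\co H_0(\Filt_q\BNcx)\to H_0(\BNcx)\cong\F_2^2$ --- rank $2$ for $q\le s^{\F_2}_{\min}(K)=s^{\F_2}(K)-1$, rank $1$ for $q=s^{\F_2}_{\max}(K)=s^{\F_2}(K)+1$, and rank $0$ for $q\ge s^{\F_2}(K)+3$, using \Proposition{differ-by-two} --- the associated graded of $H_0(\BNcx)$ for the induced filtration is nonzero precisely in quantum gradings $s^{\F_2}(K)-1$ and $s^{\F_2}(K)+1$. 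Since $E_\infty^{0,q}$ is a subquotient of $E_1^{0,q}=\Kh^{0,q}(K;\F_2)$, this shows $\Kh^{0,s^{\F_2}(K)-1}(K;\F_2)\neq0$ and $\Kh^{0,s^{\F_2}(K)+1}(K;\F_2)\neq0$, i.e.\ the support of $\Kh(K;\F_2)$ contains the diagonals $s^{\F_2}(K)-1$ and $s^{\F_2}(K)+1$. Being three consecutive diagonals, the support is therefore $\{s^{\F_2}(K)-3,\,s^{\F_2}(K)-1,\,s^{\F_2}(K)+1\}$ or $\{s^{\F_2}(K)-1,\,s^{\F_2}(K)+1,\,s^{\F_2}(K)+3\}$; in neither does the diagonal $s^{\F_2}(K)+5$ appear, and $\Kh^{-2,s^{\F_2}(K)+1}(K;\F_2)$ lives in diagonal $s^{\F_2}(K)+5$, so this group vanishes. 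Then \Corollary{alpha-zero} gives $r_+^{\Sq^2}(K)=s^{\F_2}(K)$, and applying this to $\ol K$ (still three-diagonal) gives $r_-^{\Sq^2}(K)=-r_+^{\Sq^2}(\ol K)=s^{\F_2}(K)$.

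The main obstacle is the spectral-sequence bookkeeping in part~(2): pinning down that the two surviving classes sit in quantum gradings exactly $s^{\F_2}(K)\pm1$, which is what rules out diagonal $s^{\F_2}(K)+5$ for the source of $\Sq^2$ at $q=s^{\F_2}(K)+1$. One must also be careful that the analogous claim for $s_+^{\Sq^2}$ is \emph{false} in the three-diagonal case: the group $\Kh^{-2,s^{\F_2}(K)-1}(K;\F_2)$ relevant to $s_+^{\Sq^2}$ lies in diagonal $s^{\F_2}(K)+3$, which \emph{is} one of the three diagonals, so $\Sq^2$ can be nonzero there --- this is precisely the phenomenon exploited in \Theorem{sq2-good}, and it is why part~(2) asserts only the $r_\pm$ equality.
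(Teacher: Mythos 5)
Your argument is correct and follows the paper's proof essentially verbatim: part~(1) is the observation that $\Sq^2\co\Kh^{-2,q}\to\Kh^{0,q}$ shifts the diagonal by $4$ and hence vanishes on a two-diagonal knot, and part~(2) uses $\Kh^{0,s^{\F_2}(K)\pm1}(K;\F_2)\neq0$ to force $\Kh^{-2,s^{\F_2}(K)+1}(K;\F_2)=0$, with \Corollary{alpha-zero} finishing both. The only difference is that you spell out the spectral-sequence justification of $\Kh^{0,s^{\F_2}(K)\pm1}(K;\F_2)\neq0$, which the paper leaves as an ``observe''; your added remark explaining why part~(2) cannot hold for $s_\pm^{\Sq^2}$ is also accurate.
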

\begin{proof}
  The first statement is obvious from \Corollary{alpha-zero} since the
  operation $\Sq^2\from\Kh^{i,j}\to\Kh^{i+2,j}$ vanishes identically
  for these knots. For the second statement, observe that
  $\Kh(K;\F_2)$ is non-zero on the bigradings $(0,s^{\F_2}\pm 1)$ and
  hence has to be zero on the bigrading $(-2,s^{\F_2}+1)$. Therefore,
  once again via \Corollary{alpha-zero}, we are done.
\end{proof}

\begin{proof}[Proof of \Theorem{sq2-good}]
  Consider the knot $K=9_{42}$. From direct computation, we get that
  $s^{\F_2}(K)=0$; and by direct computation or by comparing with the
  knot tables \cite{KAT-kh-knotatlas}, we learn that the ranks of
  $\Kh(K;\F_2)$ are given by
  \[
  \begin{array}{r|rrrrrrr}
    &-4&-3&-2&-1&\phantom{-}0&\phantom{-}1&\phantom{-}2\\
    \hline
      7&.&.&.&.&.&.&1\\
      5&.&.&.&.&.&1&1\\
      3&.&.&.&.&1&1&.\\
      1&.&.&.&2&2&.&.\\
     -1&.&.&1&2&1&.&.\\
     -3&.&1&1&.&.&.&.\\
     -5&1&1&.&.&.&.&.\\
     -7&1&.&.&.&.&.&.\\
  \end{array}
  \]
  From \cite[Table 1]{RS-steenrod}, we see that $\Sq^2\from
  \Kh^{-2,-1}(K;\F_2)\to \Kh^{0,-1}(K;\F_2)$ is
  surjective.\footnote{The action of $\Sq^2$ on the Khovanov homology
    of knots has been computed independently by C.~Seed (for knots up
    to $14$ crossings) \cite{See-kh-knotkit}.} Furthermore, since $s^{\F_2}(K)=0$, the map
  $H_0(\Filt_{-1};\F_2)\to H_0(\BNcx;\F_2)=\F_2\oplus\F_2$ is also
  surjective. Therefore, $-1$ is $\Sq^2$-full for $K$, and hence by
  \Lemma{rs-increase-by-2}, $s_+^{\Sq^2}(K)=s^{\F_2}(K)+2=2$.
\end{proof}

\begin{remark}
  Recall that $\sigma(9_{42})=2$, so $9_{42}$ is not topologically
  slice. (The knot $9_{42}$ has $g_4=1$, both smoothly and
  topologically.) By contrast, the Ozsv\'ath-Szab\'o concordance
  invariant $\tau$ does vanish for $9_{42}$.
\end{remark}

In \Table{computations}, we present some computations of these new
invariants. Since the Khovanov homologies for all prime knots up to
$11$ crossings are supported on three adjacent diagonals, by
\Lemma{thin-trivial}, $r^{\Sq^2}_{\pm}=s^{\F_2}$; we have only listed
the prime knots up to $11$ crossings for which one of
$s_{\pm}^{\Sq^2}$ differs from $s^{\F_2}$.

We use a Sage program (\url{http://www.sagemath.org/}) to compute
$s^{\Sq^2}_{\pm}$. The main program is main.sage; we extract data from
the Knot Atlas (\cite{KAT-kh-knotatlas}) into the file extracted.sage;
finally we use the program wrapper.sage to run the main program on
these knots. All the programs are available at any of the following
locations:

\url{http://math.columbia.edu/~sucharit/programs/newSinvariants/}

\url{https://github.com/sucharit/newSinvariants}

\tiny
\begin{center}
\begin{longtable}{crrr|crrr|crrr|crrr}
\caption{}\label{table:computations}\\*
\toprule 
$K$&$s^{\F_2}$&$s^{\Sq^2}_+$&$s^{\Sq^2}_-$&
$K$&$s^{\F_2}$&$s^{\Sq^2}_+$&$s^{\Sq^2}_-$&
$K$&$s^{\F_2}$&$s^{\Sq^2}_+$&$s^{\Sq^2}_-$ & $K$&$s^{\F_2}$&$s^{\Sq^2}_+$&$s^{\Sq^2}_-$\\*
\midrule
$9_{42}$ & $0$&$2$&$0$ & $10_{132}$ & $-2$&$0$&$-2$ & $10_{136}$ & $0$&$2$&$0$ & $K11n12$ & $2$&$2$&$0$ \\
$K11n19$ & $-2$&$-2$&$-4$ & $K11n20$ & $0$&$0$&$-2$ & $K11n24$ & $0$&$2$&$0$ & $K11n70$ & $2$&$4$&$2$ \\
$K11n79$ & $0$&$2$&$0$ & $K11n92$ & $0$&$0$&$-2$ & $K11n96$ & $0$&$2$&$0$ & $K11n138$ & $0$&$2$&$0$ \\
\bottomrule
\end{longtable}
\end{center}
\normalsize

Examples are harder to find for $\Sq^1$. Using computations of
C.~Seed's (see also \Remark{cotton-good}), we can show that  
$K14n19265$ is one example.

\begin{proof}[Proof of \Theorem{sq1-good}]
Consider the knot $K=K14n19265$. KnotTheory \cite{kat-kh-knottheory} provides us with $\Kh(K;\Z)$.

\[
\tiny
  \begin{array}{r|cccccccccccccccc}
    &-8&-7&-6&-5&-4&-3&-2&-1&0&1&2&3&4&5&6\\
    \hline
    9&.&.&.&.&.&.&.&.&.&.&.&.&.&.&\Z\\
    7&.&.&.&.&.&.&.&.&.&.&.&.&.&.&\F_{\!2}\\
    5&.&.&.&.&.&.&.&.&.&.&.&.&\Z&\Z&.\\
    3&.&.&.&.&.&.&.&.&.&.&\Z&\Z&\F_{\!2}&.&.\\
    1&.&.&.&.&.&.&.&.&\Z&.&\F_{\!2}&\Z\oplus\F_{\!2}&.&.&.\\
    -1&.&.&.&.&.&.&.&.&\Z\oplus\F_{\!2}^2&\Z^2&\Z&.&.&.&.\\
    -3&.&.&.&.&.&.&\Z^2&\Z\oplus\F_{\!2}&\F_{\!2}^2&\F_{\!2}&.&.&.&.&.\\
    -5&.&.&.&.&.&\Z&\F_{\!2}^3&\F_{\!2}^2&\Z&.&.&.&.&.&.\\
    -7&.&.&.&.&\Z&\Z^2\oplus\F_{\!2}^3&\Z\oplus\F_{\!2}&.&.&.&.&.&.&.&.\\
    -9&.&.&.&\Z^2&\Z\oplus\F_{\!2}^2&\F_{\!2}^2&.&.&.&.&.&.&.&.&.\\
    -11&.&.&\Z&\Z\oplus\F_{\!2}^2&\F_{\!2}&.&.&.&.&.&.&.&.&.&.\\
    -13&.&\Z&\Z^2\oplus\F_{\!2}&.&.&.&.&.&.&.&.&.&.&.&.\\
    -15&.&\Z\oplus\F_{\!2}&.&.&.&.&.&.&.&.&.&.&.&.&.\\
    -17&\Z&.&.&.&.&.&.&.&.&.&.&.&.&.&.\\
  \end{array}
\]
\normalsize

Direct computations done by C.~Seed (see \Remark{cotton-good}) tell
us that $s^{\F_2}(K)=-2$, while $s^{\QQ}(K)=0$ is forced by the
form of $\Kh(K;\QQ)$. We want to show that $s^{\Sq^1}(K)=0$ as
well; using \Lemma{rs-increase-by-2}, we only need to show that $-3$
is $\Sq^1$-full.

The quantum filtration on $\BNcx(K)/\Filt_{-3}\BNcx(K)$ leads to a spectral
sequence which starts at $\bigoplus_{q<-3}\Kh^{*,q}(K;\Z)$, 
and converges to $H_*(\BNcx(K)/\Filt_{-3}\BNcx(K);\Z)$,
and whose differentials increase the homological grading by $1$ and
increase the quantum grading by at least $2$. Therefore, from the form
of $\Kh(K;\Z)$, we can conclude that
$H_0(\BNcx(K)/\Filt_{-3}\BNcx(K);\Z)=\Z$ and
$H_i(\BNcx(K)/\Filt_{-3}\BNcx(K);\Z)=0$ for all $i>0$.

The short exact sequence
\[
0\to\Filt_{-3}\BNcx(K)\stackrel{\iota}{\longrightarrow}\BNcx(K)\stackrel{\pi}{\longrightarrow}\BNcx(K)/\Filt_{-3}\BNcx(K)\to 0
\]
furnishes us with a long exact sequence
\[
\cdots\to
H_0(\Filt_{-3}\BNcx(K);R)\stackrel{\iota^R_0}{\longrightarrow}
H_0(\BNcx(K);R)\cong R^2\stackrel{\pi^R_0}{\longrightarrow}
H_0(\BNcx(K)/\Filt_{-3}\BNcx(K);R)\cong R\to\cdots
\]
over any ring $R$.

Since $s^{\QQ}_{\min}(K)=-1\geq -3$, the map $\iota_0^{\QQ}$ is
surjective, and therefore the map $\pi_0^{\QQ}\from\QQ^2\to\QQ$ is
zero. Lack of torsion implies that the map $\pi_0^{\Z}\from\Z^2\to\Z$
is zero as well, and hence the map $\iota_0^{\Z}$ is surjective.

Observe that since $H_*(\BNcx(K);\Z)$ is torsion-free, the map
$H_0(\BNcx(K);\Z)\to H_0(\BNcx(K);\F_2)$ is surjective. Choose a basis
$\ol{a},\ol{b}$ of $H_0(\BNcx(K);\F_2)$, and choose elements
$\ol{\al},\ol{\be}\in H_0(\BNcx(K);\Z)$ that map to
$\ol{a},\ol{b}$. Then choose $\al,\be\in H_0(\Filt_{-3}\BNcx(K);\Z)$
which map $\ol{\al},\ol{\be}$, and consider the
following induced configuration:
\[
\tiny
\xymatrix{
&\langle\wh{a},\wh{b}\rangle\ar@{<-}[rr]\ar@{^(->}'[d][dd]&&\langle
a,b\rangle\ar[rr]\ar@{^(->}'[d][dd]&&\langle\ol{a},\ol{b}\ar@{=}[dd]\rangle\\
\langle\wh{\al},\wh{\be}\rangle\ar@{<-}[rr]\ar@{^(->}[dd]\ar[ru]&&\langle
\al,\be\rangle\ar[rr]\ar@{^(->}[dd]\ar[ru]&&\langle\ol{\al},\ol{\be}\ar@{^(->}[dd]\rangle\ar[ru]&\\
&\Kh^{0,-3}(K;\F_2)\ar@{<-}'[r][rr]&& H_0(\Filt_{-3};\F_2)\ar'[r][rr]&&
H_0(\BNcx;\F_2)\\
\Kh^{0,-3}(K;\Z)\ar@{<-}[rr]\ar[ru]&& H_0(\Filt_{-3};\ZZ)\ar[rr]\ar[ru]&& H_0(\BNcx;\ZZ).\ar[ru]& 
}               
\]
\normalsize

Since $\wh{a},\wh{b}\in\Kh^{0,-3}(K;\F_2)$ admit integral
lifts, $\Sq^1(\wh{a})=\Sq^1(\wh{b})=0$. From the form of $\Kh(K;\Z)$,
we know that the following is exact
\[
\Kh^{-1,-3}(K;\F_2)\stackrel{\Sq^1}{\longrightarrow}\Kh^{0,-3}(K;\F_2)\stackrel{\Sq^1}{\longrightarrow}\Kh^{1,3}(K;\F_2)
\]
(the rank of the first map is $2$, and the rank of the second map is
$1$, and $\Kh^{0,-3}(K;\F_2)\cong\F_2^3$). Therefore, $\wh{a}$ and $\wh{b}$ must lie in the image of $\Sq^1$ as
well. Thus we have a configuration
\[
\xymatrix{
\langle\wt{a},\wt{b}\rangle\ar[r]\ar@{^(->}[d]&\langle\wh{a},\wh{b}\rangle\ar@{<-}[r]\ar@{^(->}[d]&\langle
a,b\rangle\ar[r]\ar@{^(->}[d]&\langle\ol{a},\ol{b}\ar@{=}[d]\rangle\\
\Kh^{-1,3}(K;\F_2)\ar[r]^-{\Sq^1}&\Kh^{0,-3}(K;\F_2)\ar@{<-}[r]& H_0(\Filt_{-3};\F_2)\ar[r]& H_0(\BNcx;\F_2),
}               
\]
thereby establishing $-3$ is $\Sq^1$-full, and hence $s^{\Sq^1}(K)=0$.
\end{proof}

\section{Further remarks}\label{sec:further}
For convenience, throughout this paper we have used Khovanov homology
with coefficients in a field. It is natural to wonder how the $s$
invariants depend on the field. In a previous version of this paper,
we asked:
\begin{question}\label{ques:s-field}
  Let $\Field$ and $\Field'$ be fields. Is there a knot $K$ so that
  $s^{\Field}(K)\neq s^{\Field'}(K)$?
\end{question}

\begin{remark}\label{rem:cotton-good}
  It was claimed in~\cite[Theorem 4.2]{MTV-Kh-s-invts} that
  $s^{\Field}$ is independent of $\Field$,
  but, as noted earlier, there is
  a gap in the proof of~\cite[Proposition 3.2]{MTV-Kh-s-invts}.  Since
  the first draft of this paper, using his package knotkit
  \cite{See-kh-knotkit}, Cotton Seed has found examples of knots,
  $K14n19265$ being one of them, where $s^{\F_2}\neq s^{\QQ}$.
\end{remark}

A more quantitative version of \Question{s-field} is the following:
\begin{question}\label{ques:s-conc}
  Let $\mathscr{C}$ denote the smooth concordance group and let
  $\mathscr{T}\sbs\mathscr{C}$ denote the subgroup generated by the
  topologically slice knots.  Consider the homomorphism
  $s\defeq(s^{\QQ},s^{\F_2},s^{\F_3},s^{\F_5},\ldots)\from
  \mathscr{C}\to\Z^{\omega}$.  What are the images of $\frac{s}{2}$ and
  $\frac{\restrict{s}{\mathscr{T}}}{2}$?
\end{question}

If one considered Khovanov homology with
coefficients in $\ZZ$, there are many possible variants of
$s$. Specifically, for $m\in \ZZ$, we can consider the invariants
\begin{align*}
  s^{\ZZ,m}_{\min}(K)&=  \max\set{q\in2\Z+1}{\ZZ/m\text{ surjects onto }H_*(\BNcx(K;\ZZ))/i_*H_*(\Filt_q\BNcx(K;\ZZ))}\\
  s^{\ZZ,m}_{\max}(K)&= \max\set{q\in 2\Z+1}{\Z\oplus\Z/m\text{
      surjects onto }H_*(\BNcx(K;\ZZ))/i_*H_*(\Filt_q\BNcx(K;\ZZ))}.
\end{align*}
It is straightforward to verify that $s^{\ZZ,m}_{\max}-1$ and
$s^{\Z,m}_{\min}+1$ give concordance invariants leading to slice
genus bounds.  Along the lines of Questions~\ref{ques:s-field}
and~\ref{ques:s-conc}:
\begin{question}
  For different $m\in\Z$, how are the invariants $s^{\ZZ,m}_{\max}-1$
  (\respectively $s^{\Z,m}_{\min}+1$) related?
\end{question}

In this context, one can use cohomology operations over $\ZZ$,
similarly to \Definition{full}, to obtain other possibly new
concordance invariants.  One could go further and define more
invariants by counting more complicated configurations using
cohomology operations. For example, one could define $q$ to be
\emph{$\Sq^2$-$\Sq^1$-$\Sq^2$-half-full} if there are elements
$\overline{a}\in H_0(C;\Field_2)$, $a\in H_0(\Filt_q;\Field)$,
$\wh{a}\in \Kh^{0,q}(K;\Field_2)$, $\wt{a}_1\in
\Kh^{-2,q}(K;\Field_2)$, $\wt{a}_2\in \Kh^{-1,q}(K;\Field_2)$, and
$\wt{a}_3\in \Kh^{-3,q}(K;\Field_2)$ such that 
\begin{align*}
  \overline{a}&\neq0 &
  i_*(a)&=\overline{a} &
  p_*(a)&=\wh{a} \\
  \Sq^2(\wt{a}_1)&=\wh{a} &
  \Sq^1(\wt{a}_1)&=\wt{a}_2&
  \Sq^2(\wt{a}_3)&=\wt{a}_2,
\end{align*}
and use this notion to define an invariant
$r^{\Sq^2,\Sq^1,\Sq^2}$. (Here, $p\co \Filt_q\to \Filt_q/\Filt_{q+2}$
denotes the quotient map.) That is, $r^{\Sq^2,\Sq^1,\Sq^2}$ is defined
by looking for configurations of the form
{\tiny
\[
\xymatrix{
\langle \wt{a}_3\rangle\ar@{^(->}[d]\ar[r]& \langle \wt{a}_2\rangle\ar@{^(->}[d] & \langle\wt{a}_1\rangle\ar[l]\ar[r]\ar@{^(->}[d]&\langle\wh{a}\rangle\ar@{<-}[r]\ar@{^(->}[d]&\langle
a\rangle\ar[r]\ar@{^(->}[d]&\langle\ol{a}\rangle\neq
0\ar@<-2ex>@{^(->}[d]\\
\Kh^{-3,q}(K;\Field_2)\ar[r]^-{\Sq^2}& \Kh^{-1,q}(K;\Field_2) & 
\Kh^{-2,q}(K;\Field_2)\ar[r]^-{\Sq^2}\ar[l]_-{\Sq^1}&\Kh^{0,q}(K;\Field_2)\ar@{<-}[r]^-{p_*}& H_0(\Filt_q;\Field_2)\ar[r]^-{i_*}& H_0(\BNcx;\Field_2). 
}
\]}

We have shown that $s^{\Sq^2}_+$ is distinct from $s^{\F_2}$.  It is natural to ask how many more of these invariants are
new:
\begin{question}
  For which $\alpha$'s are the invariants $r^\alpha_{\pm}$ (\respectively
  $s^\alpha_{\pm}$) distinct? More generally, for which configurations are
  the resulting concordance invariants different?
\end{question}

Finally, in light of~\cite{FGMW-kh-exotic} and~\cite{KM-Kh-Ras-not-exotic}, it
is natural to ask:
\begin{question}
  Does $s_\pm^{\alpha}$ or $r_\pm^{\alpha}$ give genus bounds for
  bounding surfaces in homotopy $4$-balls?
\end{question}

\bibliography{Rasmus}

\end{document}